\documentclass[11pt,a4paper]{amsart}

\usepackage[T1]{fontenc}
\usepackage[utf8]{inputenc}
\usepackage{lmodern}
\usepackage{amsmath,amssymb,amsthm}
\usepackage{mathrsfs}
\usepackage[noadjust]{cite}
\usepackage{enumitem}
\usepackage{xcolor}
\IfFileExists{microtype.sty}{\usepackage{microtype}}{}
\usepackage[
  colorlinks=true,
  linkcolor=red,
  citecolor=blue,
  urlcolor=blue
]{hyperref}

\allowdisplaybreaks[4]

\numberwithin{equation}{section}

\setlist[itemize]{topsep=4pt,itemsep=2pt,parsep=1pt}
\setlist[enumerate]{topsep=4pt,itemsep=3pt,parsep=1pt}

\theoremstyle{plain}
\newtheorem{theorem}{Theorem}[section]
\newtheorem{proposition}[theorem]{Proposition}
\newtheorem{lemma}[theorem]{Lemma}
\newtheorem{corollary}[theorem]{Corollary}

\theoremstyle{definition}
\newtheorem{hypothesis}[theorem]{Hypothesis}
\newtheorem{definition}[theorem]{Definition}
\newtheorem{notation}[theorem]{Notation}
\newtheorem{convention}[theorem]{Convention}
\newtheorem{recollection}[theorem]{Recollection}

\theoremstyle{remark}
\newtheorem{remark}[theorem]{Remark}

\hypersetup{
  pdftitle={Stratification of Artin Motives over Local Fields},
  pdfauthor={Peng Xu}
}

\begin{document}

\begin{center}
{\large\bfseries STRATIFICATION OF ARTIN MOTIVES OVER LOCAL FIELDS}

\vspace{0.5cm}
Peng Xu
\end{center}

\bigskip
\centerline{\bfseries Abstract}
\medskip

\leftskip10truemm
\rightskip10truemm
\noindent\hspace{1em}%
Let \(k\) be a field of characteristic \(p>0\). We prove that for
every \(p\)-decomposition group \(P\), the tensor-triangulated category
\(\operatorname{DPerm}(P;k)\) is stratified and its Balmer spectrum
is generically noetherian. As an arithmetic application, we show that
for a nonarchimedean local field \(F\) with residue characteristic
\(\ell\), the category \(\operatorname{DAM}(F;k)\) of derived Artin motives is
stratified if and only if \(\ell\neq p\). In the stratified case its Balmer
spectrum is generically noetherian; consequently,
\(\operatorname{DAM}(F;k)\) satisfies the  telescope
conjecture.\\[2mm]
\textbf{Keywords:} Artin motive; permutation module; local field;
stratification.\\
\textbf{2020 Mathematics Subject Classification:} 11S20, 18G80, 20E18.

\leftskip0truemm
\rightskip0truemm
\medskip

\hypertarget{introduction}{%
\section{Introduction}\label{introduction}}

\quad Stratification theory provides a support-theoretic mechanism for recovering
the localizing tensor ideals of a large tensor-triangulated category
from the tensor-triangular geometry of its compact subcategory. If \(\mathscr T\) is a
rigidly-compactly generated tensor-triangulated category with spectrum
\(\operatorname{Spc}(\mathscr T^c)\) weakly noetherian, then \(\mathscr T\) is said to be stratified if support induces mutually
inverse bijections
\[
 \left\{
   \text{localizing  ideals of }\mathscr T
 \right\}
 \longleftrightarrow
 \left\{
   \text{subsets of }\operatorname{Spc}(\mathscr T^c)
 \right\}.
\]
This theory has proved effective in many areas, including algebraic
geometry \cite{Krause,Nee92}, modular representation theory
\cite{BIK11,BIKP18,BBIKP25}, and algebraic topology \cite{BarthelHeardSanders}. More recently, it
has been extended to non-noetherian settings \cite{Zou}.

Permutation modules form a natural meeting point of modular
representation theory, Mackey functors, and Artin motives. Fix a field \(k\) of characteristic \(p\). For finite groups, Balmer and Gallauer determined
the tensor-triangular geometry of permutation modules and proved that the big derived category
\(\operatorname{DPerm}(G;k)\) of permutation modules is stratified and satisfies the
telescope conjecture \cite{BalmerGallauerGeometry}.
They subsequently  proved that the big
category of Artin motives over a finite field is stratified and
satisfies the telescope conjecture \cite{BalmerGallauerArtin}. For an arbitrary field \(F\),
the Grothendieck--Galois correspondence provides a tensor-triangulated
equivalence
\[
 \operatorname{DAM}(F;k)
 \simeq
 \operatorname{DPerm}(G_F;k).
\]
It is therefore natural to ask whether the categories of Artin motives
over local and global fields are stratified. Our first main result gives a complete answer for nonarchimedean local
fields:
\medskip
\begin{theorem}\label{thm1.1}
Let \(F\) be a nonarchimedean local field with residue field \(\kappa_F\). Then
\[
 \operatorname{DAM}(F;k)\text{ is stratified}
 \quad\Longleftrightarrow\quad
 \operatorname{char}(\kappa_F)\neq p.
\]
\end{theorem}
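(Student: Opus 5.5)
We reduce throughout to \(\operatorname{DPerm}(G_F;k)\) via the Grothendieck--Galois equivalence \(\operatorname{DAM}(F;k)\simeq\operatorname{DPerm}(G_F;k)\), and use two structural facts about profinite groups. First, in \(\operatorname{DPerm}(G;k)\) for profinite \(G\), stratification can be detected on a pro-\(p\) Sylow-type subgroup: restriction to a \(p\)-Sylow subgroup \(S\le G\) is faithful on localizing ideals, since the index is prime to \(p\) and \(k[G/S]\) splits off \(k\) after the usual averaging. Moreover, the spectrum of \(G\) is a quotient of that of \(S\) by conjugation. So the question becomes one about a \(p\)-Sylow subgroup of \(G_F\). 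Second, we use local class field theory and the structure of \(G_F\) to identify that Sylow.

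\textbf{Case \(\operatorname{char}(\kappa_F)=\ell\neq p\).} The wild inertia is pro-\(\ell\), so a \(p\)-Sylow \(P\) of \(G_F\) is a quotient of the tame group \(\widehat{\mathbb Z}^{(\ell')}(1)\rtimes\widehat{\mathbb Z}\), restricted to its \(p\)-part. Hence \(P\cong\mathbb Z_p\rtimes\mathbb Z_p\), or \(\mathbb Z_p\) if \(\mu_{p^\infty}\) behaves degenerately. This is a \(p\)-decomposition group in the sense of the abstract, so the main theorem on \(\operatorname{DPerm}(P;k)\) applies: it is stratified with generically noetherian spectrum. We then descend stratification from \(P\) to \(G_F\). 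Restriction–induction along \(P\le G_F\) is a finite étale-type extension in the limit, and stratification descends along such extensions, by the Barthel--Castellana--Heard--Naumann--Pol descent criterion or an analogue for profinite pieces. The key input is that \(\operatorname{Res}\) is conservative and the induced map on spectra is surjective with finite fibres. Writing \(G_F=\lim G_F/N\) with \(N\) open normal, we may instead argue via the open subgroup \(U=P\cdot N\) and a colimit over \(N\). I expect this descent, and verifying that the local-to-global principle survives the colimit over open subgroups, to be the main technical obstacle. I would first try to reduce to \(G_F/I^{w}\), the quotient by the pro-\(\ell\) wild inertia, since quotienting by a pro-\(p'\) normal subgroup is harmless: fixed points are exact and inflation is an equivalence onto the relevant subcategory.

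\textbf{Case \(\ell=p\).} The wild inertia subgroup is a free pro-\(p\) group of infinite (countable) rank. It is a closed normal subgroup, and \(G_F\) has a Sylow \(P\) containing it, with \(P\) a Demushkin-type group of rank \(n+2\) for \(p\)-adic \(F\), or free of infinite rank in equal characteristic. The strategy is to exhibit a failure of minimality. Stratification of \(\operatorname{DPerm}(G_F;k)\) would imply it for the restriction to suitable subgroups and quotients, and then for an elementary abelian quotient \(E=(\mathbb Z/p)^r\) with \(r\) arbitrarily large, or an infinite-rank elementary abelian pro-\(p\) quotient \(\prod\mathbb Z/p\) of \(P^{ab}\). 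In the equal-characteristic case, \(H^1(F,\mathbb F_p)\) is infinite via Artin--Schreier, so \(G_F\) surjects onto \(E=\prod_{\mathbb N}\mathbb Z/p\). I would show directly that \(\operatorname{DPerm}(E;k)\) is not stratified. Its spectrum, a colimit of the Balmer--Gallauer spectra of finite elementary abelian groups, fails to be weakly noetherian, or carries a point whose Koszul-type local category contains two distinct nonzero localizing ideals. For mixed characteristic, \(P^{ab}\) has finite rank, so I would instead use the infinite-rank free pro-\(p\) wild inertia. Arguing via inflation from quotients \(G_F\twoheadrightarrow G_F/N\) with \(N\subset\) wild inertia chosen so that the image of inertia is infinite-rank elementary abelian, one transports the non-noetherian phenomenon. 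Showing that non-stratification genuinely transfers along inflation, that is, that inflation is fully faithful on localizing ideals in the needed direction, is the second delicate point.
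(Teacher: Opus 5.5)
\textbf{The wild direction ($\ell=p$) has a genuine gap.}

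First, the group theory is misidentified. The Demushkin group of rank $n+2$ (or the free group of rank $n+1$) is the maximal pro-$p$ \emph{quotient} $G_F(p)$, not a $p$-Sylow subgroup. By Labute, a $p$-Sylow $P\le G_F$ has rank $\aleph_0$, so $P/\Phi(P)\simeq\prod_{n\ge1}C_p$ already in mixed characteristic. In equal characteristic you get the same quotient by restricting the Artin--Schreier characters to $P$, since restriction is injective on $H^1(-;\mathbb F_p)$.

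More seriously, the transfer mechanism is missing. You give no argument that stratification passes to closed subgroups or to quotients. Inflation induces a map $X_{G_F}\to X_E$, which points the wrong way for transporting a non-weakly-visible point. Weak visibility pulls back along a spectral map whose fibre over the point is a singleton. So you need a spectral map \emph{into} $X_{G_F}$ whose fibre over some point is a known non-weakly-visible point.

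The paper builds this map with modular fixed points:
\begin{enumerate}
\item Choose $H\trianglelefteq P$ pro-$p$ with $P/H\simeq E=\prod_{n\ge1}C_p$. Then $E$ is a closed subgroup of $W=N_{G_F}(H)/H$.
\item Consider the composite of restriction $\mathcal V_E\to\mathcal V_W$ with the injective spectral map $\check\psi_H\colon\mathcal V_W\to X_{G_F}$. Its fibre over $x=\check\psi_H(\mathfrak m_W)$ is exactly $\{\mathfrak m_E\}$, by degree reasons plus the finite-group case in the limit.
\item Since $\mathfrak m_E$ is not weakly visible (Balmer--Gallauer, Example~6.10), $x$ is not weakly visible.
\item Hence $X_{G_F}$ is not weakly noetherian, and Zou's theorem rules out stratification.
\end{enumerate}
This requires $H$ to be pro-$p$. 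That is why one passes to the Sylow rather than to a quotient of $G_F$ itself, whose kernel is not pro-$p$, as in your equal-characteristic sketch. Your wild-inertia idea can be rescued the same way, taking $H=\Phi(I^{\mathrm w}_F)$, but not via inflation.

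\textbf{In the tame direction, the descent from $P$ to $G_F$ is not carried out.} Invoking stratification of $\operatorname{DPerm}(P;k)$ for the $p$-decomposition group is fine; the paper does the same. Note also that $P$ is never just $\mathbb Z_p$, since $q$ is not a root of unity in $\mathbb Z_p^\times$. The route you propose for the descent, however, is the problematic one:
\begin{itemize}
\item $k(G_F/P)$ is not a permutation module, because $P$ is not open.
\item Passing through a colimit over $U=PN$ is exactly where local-to-global and minimality can fail to transfer.
\item Quotienting by $I^{\mathrm w}_F$ is not enough. The prime-to-$p$ part of Frobenius still acts nontrivially on $\mathbb Z_p(1)$, so $G_F/I^{\mathrm w}_F$ is not of the form $K\rtimes P$ with $K$ pro-$p'$.
\end{itemize}
The paper avoids colimits altogether:
\begin{enumerate}
\item Pass to the open normal subgroup $G_0$ of index $m=\operatorname{ord}(q\bmod p)$, which is prime to $p$. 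On $G_0$ the Frobenius action on $\mathbb Z_p(1)$ becomes pro-$p$, so $G_0=K\rtimes P$ with $K$ pro-$p'$ by Schur--Zassenhaus.
\item Restriction to $P$ is then a homeomorphism on spectra and transfers stratification (Balmer--Gallauer, Example~3.15 and Corollary~5.5).
\item Descend along the finite separable extension $k(G_F/G_0)$. This identifies $X_{G_F}\simeq X_{G_0}/(G_F/G_0)$, which is weakly noetherian with discrete fibres.
\item Finally, apply quasi-finite descent to the strongly closed functor $\operatorname{Res}^{G_F}_{G_0}$.
\end{enumerate}
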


The archimedean local fields also lie on the positive side:
\(\operatorname{DAM}(\mathbb R;k)\) and
\(\operatorname{DAM}(\mathbb C;k)\) are stratified by the finite-group
result of Balmer and Gallauer. Thus we actually classify all local fields \(F\) such that  \(\operatorname{DAM}(F;k)\) is stratified. The obstruction used for the wild direction of Theorem~\ref{thm1.1} is not
confined to local fields. If \(K\) is any global field, then
\(H^1_{\mathrm{cts}}(G_K;\mathbb F_p)\) is infinite-dimensional.
Then the argument used in the wild local case shows that
\(\operatorname{DAM}(K;k)\text{ is not stratified}\) for every global field \(K\).

In the case
\(\operatorname{char}(\kappa_F)\neq p\),  we prove in addition that
\(\operatorname{Spc}\bigl(\operatorname{DAM}^{\rm gm}(F;k)\bigr)\)
is generically noetherian. Combining this with the tame part of
Theorem~\ref{thm1.1} yields our second main result:
\medskip
\begin{theorem}\label{thm1.2}
Let \(F\) be a nonarchimedean local field such that
\(\operatorname{char}(\kappa_F)\neq p\). Then
\(\operatorname{DAM}(F;k)\) satisfies the telescope conjecture, i.e., every smashing localizing ideal of
\(\operatorname{DAM}(F;k)\) is compactly generated.
\end{theorem}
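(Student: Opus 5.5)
We derive Theorem~\ref{thm1.2} from the general principle of Barthel--Heard--Sanders: if a rigidly-compactly generated tensor-triangulated category \(\mathscr T\) is stratified and \(\operatorname{Spc}(\mathscr T^c)\) is generically noetherian, then \(\mathscr T\) satisfies the telescope conjecture. The argument then has two inputs, both announced in the introduction. By the tame direction of Theorem~\ref{thm1.1}, \(\operatorname{DAM}(F;k)\) is stratified. By the claim made just before the statement, \(\operatorname{Spc}(\operatorname{DAM}^{\rm gm}(F;k))\) is generically noetherian.

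The first step is to transport everything across the Grothendieck--Galois equivalence \(\operatorname{DAM}(F;k)\simeq\operatorname{DPerm}(G_F;k)\). This equivalence is tensor-triangulated and identifies compact parts, so both stratification and the Balmer spectrum can be read off on the permutation side.

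The second step uses local class field theory and ramification theory for \(\ell=\operatorname{char}(\kappa_F)\neq p\). The wild inertia subgroup is pro-\(\ell\), so a pro-\(p\) Sylow subgroup of \(G_F\) meets it trivially. A Sylow pro-\(p\) subgroup \(P\) of \(G_F\) is therefore a quotient of \(\mathbb Z_p\rtimes\mathbb Z_p\) (tame inertia extended by Frobenius), i.e. a \(p\)-decomposition group. The abstract states that for such \(P\), \(\operatorname{DPerm}(P;k)\) is stratified with generically noetherian spectrum.

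The third step passes from \(P\) to \(G_F\). I would argue that \(\operatorname{Spc}(\operatorname{DPerm}(G_F;k)^c)\) is covered by the images of spectra of \(\operatorname{DPerm}(N_{G_F}(Q)/Q;k)\)-type pieces, as in Balmer--Gallauer, for closed \(p\)-subgroups \(Q\). Each such piece is controlled by subquotients of \(P\), which are again \(p\)-decomposition groups, so the finiteness properties transfer. Concretely, the map \(\operatorname{Spc}(\operatorname{Res}^{G_F}_P)\) is surjective, since restriction to a Sylow is faithful on compacts for permutation modules in characteristic \(p\). Generic noetherianity then descends along this surjective, closed, finite-fibred map. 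Indeed, every point \(\mathfrak p\) of the target has a preimage \(\mathfrak q\), and \(\operatorname{gen}(\mathfrak q)\) is noetherian and maps onto a set containing \(\operatorname{gen}(\mathfrak p)\) up to finitely many fibres.

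The main obstacle is this descent of generic noetherianity from \(P\) to \(G_F\), in particular the claim that generalizations lift along restriction, a "going-down" property. I would establish it using the finite-index open normal subgroups of \(G_F\) containing \(P\) up to conjugacy. Writing \(G_F\) as a limit of finite quotients \(G_F/U\), I would use continuity of \(\operatorname{Spc}\) along the colimit of categories, combined with the known going-down for finite groups from Balmer--Gallauer's computation via the modular fixed-points functors. Once the spectrum is generically noetherian and stratification holds, the Barthel--Heard--Sanders theorem gives that every smashing localizing ideal is compactly generated, which is the assertion of Theorem~\ref{thm1.2}.
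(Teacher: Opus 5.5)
\textbf{There is a genuine gap in your third step.} Your skeleton matches the paper: stratification from the tame half of Theorem~\ref{thm1.1}, generic noetherianity of $X_{G_F}$, then \cite[Theorem~9.11]{BarthelHeardSanders}. The gap is the descent of generic noetherianity from $X_P$ to $X_{G_F}$, which you flag as the main obstacle but do not resolve. The properties you attribute to $f=\operatorname{Spc}(\operatorname{Res}^{G_F}_P)$ (closed, finite fibres, going-down) are only asserted. Surjectivity of $f$ is fine, but it is not enough.

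Since $P$ has infinite index in $G_F$, the indices $[G_F/U:PU/U]$ are unbounded along the finite quotients. So no bound on fibre size passes to the limit for free. The finite-group input you invoke is also not what is available. The Balmer--Gallauer specialization result \cite[Theorem~11.10]{BalmerGallauerGeometry} lifts a specialization to \emph{some} pair of points on \emph{some} elementary abelian section. It does not produce a generalization of a \emph{prescribed} preimage in $X_{PU/U}$. At best you get $\operatorname{gen}(y)\subseteq\bigcup_{x\in f^{-1}(y)}f(\operatorname{gen}(x))$. That is useful only once you know $f^{-1}(y)$ is finite, which is exactly the unproved point.

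\textbf{The missing idea} is the group-theoretic structure from the proof of Proposition~\ref{prop8.3}. There is an open normal subgroup $G_0\trianglelefteq G_F$ of index $m$ prime to $p$ with $G_0=K\rtimes P$, where $K$ has pro-order prime to $p$.
\begin{itemize}
\item Restriction from $G_0$ to $P$ is a homeomorphism on spectra \cite[Example~3.15]{BalmerGallauerArtin}.
\item Because $G_0$ is normal, $k(G_F/G_0)$ is a Galois-type separable tt-ring. Hence \cite[Theorem~3.19]{BalmerSeparableExtensions} gives $X_{G_F}\simeq X_{G_0}/(G_F/G_0)\simeq X_P/(G_F/G_0)$, and $f$ becomes the orbit map of a finite group.
\item For such an orbit map, the going-down you want is automatic: $\operatorname{gen}_{X/D}(\pi(x))=\pi\bigl(\bigcup_{d\in D}\operatorname{gen}_X(dx)\bigr)$ (Lemma~\ref{lem8.2}), and fibres have at most $m$ points.
\end{itemize}
Combined with Proposition~\ref{prop4.5}, this gives generic noetherianity of $X_{G_F}$.

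\textbf{Two smaller corrections.}
\begin{itemize}
\item $P$ is not a quotient of $\mathbb Z_p\rtimes\mathbb Z_p$. Since $P\cap I_F^{\mathrm w}=1$, it maps isomorphically onto a Sylow subgroup $A\rtimes B$ of the tame quotient. That this is a $p$-decomposition group also needs faithfulness of the action, i.e.\ that $q^m$ is not a root of unity in $\mathbb Z_p^\times$.
\item Subquotients of $P$ are generally not $p$-decomposition groups. By Lemma~\ref{lem4.3}, $\mathbb Z_p$, $C_{p^r}\rtimes\mathbb Z_p$ and finite $p$-groups all occur as Weyl groups. That sentence should be dropped.
\end{itemize}
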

\smallskip
The main difficulty is that stratification does not, in general,
pass formally through filtered colimits. Consequently, the
finite-group stratification theorem does not by itself imply either
the local-to-global principle or pointwise minimality for a profinite
group. Moreover, the spectrum may contain points with nonopen closed
pro-\(p\) subgroup parameters, and such points cannot be captured at
any single finite stage.

We overcome this difficulty by separating the two ingredients of
stratification. In the tame case, the relevant \(p\)-Sylow subgroup is
a \(p\)-decomposition group \(P=A\rtimes_{\chi}B,\) where
\(A\simeq B\simeq(\mathbb Z_p,+)\) and \(\chi\colon B\longrightarrow\operatorname{Aut}_{\mathrm{cts}}(A)\simeq \mathbb Z_p^\times\)
is a continuous injective homomorphism. We first classify its closed subgroups and
determine the possible Weyl pro-\(p\) groups and their continuous
cohomology. This proves that \(\operatorname{Spc}\bigl(\operatorname{DPerm}(P;k)^c\bigr)\)
is generically noetherian and that its Hochster dual is weakly
scattered. The latter property yields the local-to-global principle.

Pointwise minimality is proved separately for the four types of closed
subgroups occurring in \(P\). The trivial subgroup is treated using
cohomological localization and the minimality theorem of
Heyer--Schneider \cite{HeyerSchneider}. Open subgroups  are reduced to the finite-group
case via modular fixed points, using the right adjoint to lift
pointwise minimality back to \(P\), while the
vertical subgroups \(p^rA\) are handled by a localized right-adjoint
capture argument. The transverse procyclic subgroups require the
principal continuity argument of the paper. For these points we
introduce stalkwise \(H\)-strata and selected-restriction functors,
recover the compact parts of the relevant strata from finite
quotients, establish compatible normalizer equivalences at every
finite stage. Thus we use filtered continuity only for the compact localized strata where it is
available, without asserting that stratification itself passes to the
profinite limit. Finally, prime-to-\(p\) reduction and finite descent
transfer stratification  from the
\(p\)-decomposition group to the absolute Galois group of the local field.

The paper is organized as follows.
Section~\ref{sec:preliminaries} recalls some basic facts about derived permutation modules and the Balmer spectrum of the associated category
of compact objects.
Section~\ref{sec:wild} proves the obstruction in residue
characteristic \(p\) and derives the corresponding negative result for
global fields. Section~\ref{sec:pdecomp} analyzes the closed subgroups
and Weyl groups of \(p\)-decomposition groups and proves generic
noetherianity and the local-to-global principle.
Section~\ref{sec:cohomological} develops the analytic and
cohomological input needed for pointwise minimality.
Section~\ref{sec:continuity} introduces stalkwise \(H\)-strata,
establishes their compact continuity, and proves the  selected-restriction equivalences.
Section~\ref{sec:minimality} proves  stratification for every
\(p\)-decomposition group. Finally, Section~\ref{sec:descent} passes to absolute Galois groups, proves the
local-field classification, and establishes the telescope conjecture
in the tame case.

\section{Preliminaries}\label{sec:preliminaries}

We begin by recalling some basic facts about derived permutation modules to fix our notation; see \cite{BalmerGallauerPermutation,BalmerGallauerArtin}
for further details. We assume familiarity with the basic notions of tensor-triangular geometry; see
\cite{Balmer,BalmerFavi,BarthelHeardSanders} for background.
\begin{hypothesis}
Throughout, \(G\) is a profinite group and \(k\) is a field of
characteristic \(p>0\).
\end{hypothesis}
\begin{convention}We write \(H\leq_GK\) if a \(G\)-conjugate of \(H\) is contained in \(K\), and write \(H\leq_oG\) if \(H\) is an open subgroup of \(G\). \(\operatorname{Sub}_p(G)\) denotes the set of closed pro-\(p\) subgroups of \(G\) and \(\operatorname{Sub}_p(G)/G\) denotes the quotient of \(\operatorname{Sub}_p(G)\) under \(G\)-conjugation.  We write \(\mathrm{Max}_o(G)\) for the set of maximal proper open subgroups of \(G\). For a spectral space \(X\), \(\operatorname{gen}_X(x)\) denotes the set of generalizations of \(x\).  The Hochster dual of \(X\) is denoted \(X^*\).
\end{convention}
\begin{recollection}
\label{rec2.3}
Let \(\mathcal A_G:=\operatorname{Mod}^{\mathrm{sm}}_k(G)\) be the category of smooth \(kG\)-modules, equivalently discrete
continuous \(kG\)-modules. It is a locally noetherian Grothendieck category with noetherian objects  the finite-dimensional smooth modules. For a discrete continuous \(G\)-set \(X\),
 \(k(X)\) is a free \(k\)-module on \(X\) equipped with the induced \(G\)-action. This yields a functor \(k(-)\) from the category of discrete continuous \(G\)-sets to \(\mathcal A_G\). A \(kG\)-module is called permutation if it is in the essential image of \(k(-)\). We denote by
 \(\operatorname{Perm}(G;k)\)  the category of permutation modules, and by \(\operatorname{perm}(G;k)\) the subcategory of finite
permutation modules.

A complex \(X\in K(\operatorname{Perm}(G;k))\) is called
\(G\)-acyclic if the fixed-point complex \(X^H\) is acyclic for every
open subgroup \(H\leq_oG\). Let \(K_{G\text{-}\mathrm{ac}}\bigl(\operatorname{Perm}(G;k)\bigr)\)
denote the full subcategory of \(G\)-acyclic complexes. A morphism is a
\(G\)-quasi-isomorphism if its cone is \(G\)-acyclic. The derived
category of permutation modules is the Verdier localization
\[
 \operatorname{DPerm}(G;k)
 :=
 K(\operatorname{Perm}(G;k))[\mathrm{QI}_G^{-1}]
 \simeq
 \frac{K(\operatorname{Perm}(G;k))}
      {K_{G\text{-}\mathrm{ac}}
        (\operatorname{Perm}(G;k))}.
\]
The quotient functor restricts to an equivalence
\[
 \operatorname{Loc}_{K(\operatorname{Perm}(G;k))}
 \{k(G/H)\mid H\leq_oG\}
 \xrightarrow{\sim}
 \operatorname{DPerm}(G;k).
\]
Moreover, \(\operatorname{DPerm}(G;k)\) is a rigidly-compactly generated tensor triangulated category, with compact subcategory
\[
 \operatorname{DPerm}(G;k)^c\simeq \operatorname{thick}_{K(\operatorname{Perm}(G;k))} \{k(G/H)\mid H\leq_oG\}\simeq
K^b(\operatorname{perm}(G;k))^\natural .
\]
Here the symmetric monoidal structure is induced by  diagonal \(G\)-action and \((-)^\natural\) denotes idempotent completion. We
write
\(
 \mathscr T(G):=\operatorname{DPerm}(G;k),\)
and
 \(\mathscr K(G):=\mathscr T(G)^c\) for abbreviation.
\end{recollection}

\begin{recollection}\label{rec2.4}
Let \(F\) be a field with absolute Galois group \(G_F\). The classical Grothendieck-Galois correspondence
induces a canonical equivalence of tt-categories
\(\operatorname{DAM}(F;k)\simeq\operatorname{DPerm}(G_F;k)\) with the triangulated category of Artin motives. In particular, after passing to compact parts, the category of geometric Artin motives \(\operatorname{DAM}^{\rm gm}(F;k)\) identifies with \(\mathscr K(G_F)\).
\end{recollection}

\begin{notation}For a profinite group \(G\), \(D^b(kG)\) denotes the bounded derived category of bounded complexes of finite-dimensional discrete continuous \(kG\)-modules. We write \(
 \mathcal V_G=\operatorname{Spc}(D^b(kG))\simeq \operatorname{Spec}^{h}H^\bullet_{\mathrm{cts}}(G;k)\) for the Balmer spectrum of \(D^b(kG)\) and denote its  irrelevant closed point
by \(\mathfrak m_G=H^{>0}(G;k).\)
\end{notation}

\begin{recollection}\label{rec2.6}
For a
profinite group \(G\),  we write \(D(\mathcal A_G)\) for
the unbounded derived category and
\(K(\operatorname{Inj}\mathcal A_G)\) for the homotopy category of
unbounded complexes of injective objects of \(\mathcal A_G\). Realization gives symmetric monoidal exact
functors
\[
 q_G:\mathscr K(G)\longrightarrow D^b(kG),
 \qquad
 \Upsilon_G:\mathscr T(G)\longrightarrow D(\mathcal A_G),
\]
where \(\Upsilon_G\) preserves coproducts and its restriction to
\(\mathscr K(G)\) is the composite of \(q_G\) with
the canonical functor \(D^b(kG)\to D(\mathcal A_G)\).  We write \(\mathscr K_{\mathrm{ac}}(G):=\ker(q_G).\)

For every profinite group \(G\), there is a bijection of underlying sets
\[
 \operatorname{Spc}(\mathscr K(G))=\coprod_{(H)}\mathcal V_{N_{G}(H)/H}
\]
over conjugacy classes \((H)\) of closed pro-\(p\)-subgroups \(H\) of \(G\).
Thus each point \(x\in \operatorname{Spc}(\mathscr K(G))\) is of the form
\(\mathcal P_G(H,\mathfrak q)\) with \(\mathfrak q\in\mathcal V_{N_G(H)/H}\), uniquely up to \(G\)-conjugacy. We call \([H]_{G}\) a subgroup parameter of \(x\).
This defines a subgroup-parameter map
\[
 \sigma_G:\operatorname{Spc}(\mathscr K(G))
       \longrightarrow \operatorname{Sub}_p(G)/G,
 \qquad
 \sigma_G\bigl(\mathcal P_G(H,\mathfrak q)\bigr)=[H]_G,
\]
where \(\operatorname{Sub}_p(G)\) denotes the set of closed pro-\(p\) subgroups. Sometimes we
write \(X_G:=\operatorname{Spc}(\mathscr K(G))\) for abbreviation.
The following  facts will be used separately.  If \(y=\mathcal P_G(H',\mathfrak q')\) is a generalization of
\(x=\mathcal P_G(H,\mathfrak q)\), then \(x\subseteq y\) as prime ideals and \( H'\leq_G H\). For every open
subgroup \(K\leq_oG\), we have
\[
 \operatorname{kos}_G(K)\in\mathcal P_G(H,\mathfrak q)
       \Longleftrightarrow H\leq_GK,
 \qquad
 \mathcal P_G(H,\mathfrak q)\in
       \operatorname{supp}(\operatorname{kos}_G(K))
       \Longleftrightarrow H\nleq_GK,
\]
where \(\operatorname{kos}_G(K):=
 \mathop{{}^{\otimes}\!\operatorname{Ind}}\nolimits_K^G
 \bigl(0\rightarrow k\xrightarrow{\,1\,}k\rightarrow0\bigr)
 \in\mathscr K(G).\)
\end{recollection}

\section{The wild case}\label{sec:wild}

In this section, we prove the negative direction of the local-field classification. We first show that a \(p\)-Sylow subgroup of the
absolute Galois group admits an infinite elementary abelian pro-\(p\) quotient. We then use a cohomological point that is not weakly visible
to produce a non-weakly-visible point in the corresponding permutation spectrum. The same argument shows that the category of Artin motives
over any global field is not stratified.
\begin{lemma}\label{lem3.1}
Let \(f:X\to Y\) be a spectral map. If \(f^{-1}(y)=\{x\}\) and \(y\) is
weakly visible, then \(x\) is weakly visible.
\end{lemma}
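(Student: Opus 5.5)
We work with the standard definition of weak visibility in a spectral space \(X\): a point \(x\) is weakly visible if there are Thomason (quasi-compact open complement) closed subsets \(Z_1, Z_2\subseteq X\) with
\[
\{x\}=Z_1\cap(X\setminus Z_2).
\]
Equivalently, \(x\) is weakly visible if \(\{x\}\) is the intersection of a Thomason closed set with the complement of a Thomason closed set. The plan is to pull back a witness for \(y\) along \(f\).

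First, since \(y\) is weakly visible, choose Thomason closed subsets \(W_1,W_2\subseteq Y\) with \(\{y\}=W_1\cap(Y\setminus W_2)\). Set \(Z_i:=f^{-1}(W_i)\) for \(i=1,2\).

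Next, we check that each \(Z_i\) is Thomason closed in \(X\). It is closed because \(f\) is continuous. Its complement is \(f^{-1}(Y\setminus W_i)\). Here \(Y\setminus W_i\) is quasi-compact open, and a spectral map pulls quasi-compact opens back to quasi-compact opens; this is part of the definition of spectral map. So \(X\setminus Z_i\) is quasi-compact open, and \(Z_i\) is Thomason.

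Finally, compute
\[
Z_1\cap(X\setminus Z_2)=f^{-1}\bigl(W_1\cap(Y\setminus W_2)\bigr)=f^{-1}(y)=\{x\}.
\]
This exhibits \(x\) as weakly visible.

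If the paper instead uses the characterization "\(\{x\}=\overline{\{x\}}\cap\operatorname{gen}(x)\) with \(\overline{\{x\}}\) Thomason" or a variant, the same pullback argument applies, since the Thomason property is preserved under preimages by spectral maps. There is no real obstacle. The only point needing care is that "spectral" supplies exactly the quasi-compactness of preimages of quasi-compact opens, which is what makes the pulled-back closed sets Thomason.
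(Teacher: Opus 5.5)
Your argument is the paper's proof: write \(\{y\}=V\cap W^c\) with \(V,W\) Thomason, pull both back along \(f\), and use \(f^{-1}(y)=\{x\}\).

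The one thing to fix is your definition. In Balmer--Favi and Barthel--Heard--Sanders, a Thomason subset is an arbitrary union of closed subsets with quasi-compact open complement, i.e.\ an open subset of the Hochster dual. It is not a single such closed set. This is the notion the paper needs: weak visibility enters Proposition~\ref{prop3.4} through weak noetherianity and Zou's Theorem~A, and Lemma~\ref{lem8.1} uses it as local closedness of \(\{x\}\) in \(X^*\).

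Your version forces \(\{x\}\) to be constructible, which is strictly stronger. In a profinite space every point is weakly visible in the standard sense: take \(Y_1=X\) and \(Y_2=X\setminus\{x\}\), which is open and hence a union of clopens. Only the isolated points meet your condition. So with your definition, the lemma could not take as input what weak noetherianity actually supplies in Proposition~\ref{prop3.4}.

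The repair is one line. If \(V=\bigcup_i Z_i\) with each \(Z_i\) closed and \(Y\setminus Z_i\) quasi-compact open, then \(f^{-1}(V)=\bigcup_i f^{-1}(Z_i)\). Each \(f^{-1}(Z_i)\) is closed with quasi-compact open complement because \(f\) is spectral. The same holds for \(W\), and your final computation then goes through verbatim.
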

\begin{proof}
Write \(\{y\}=V\cap W^c\) with \(V,W\) Thomason. Spectral maps pull
Thomason subsets back to Thomason subsets, and therefore
\(
 \{x\}=f^{-1}(V)\cap f^{-1}(W)^c.
\)
\end{proof}

\begin{lemma}\label{lem3.2}

Let \(F\) be a nonarchimedean local field with residue characteristic \(p\). Then every
\(p\)-Sylow subgroup \(P\leq G_F\) has a closed normal subgroup
\(H\trianglelefteq P\) such that \(P/H\simeq \prod_{n\geq1}C_p.\)
\end{lemma}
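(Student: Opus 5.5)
The plan is to produce the quotient from continuous cohomology. Every closed normal subgroup \(N\trianglelefteq P\) with \(P/N\) elementary abelian of exponent \(p\) corresponds to a closed subgroup of \(H^1_{\mathrm{cts}}(P;\mathbb F_p)=\operatorname{Hom}_{\mathrm{cts}}(P,\mathbb F_p)\). Concretely, let \(\Phi(P)=\overline{P^p[P,P]}\) be the Frattini subgroup. Then \(P/\Phi(P)\) is an elementary abelian pro-\(p\) group, and its Pontryagin dual is \(H^1_{\mathrm{cts}}(P;\mathbb F_p)\). Any elementary abelian pro-\(p\) group is, as a compact \(\mathbb F_p\)-vector space, isomorphic to \(\mathbb F_p^I\) with \(I\) a basis of the discrete dual. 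So it suffices to show that \(V:=H^1_{\mathrm{cts}}(P;\mathbb F_p)\) is infinite-dimensional. Given that, I will pick a countably infinite linearly independent family \(\varphi_1,\varphi_2,\dots\in V\) and set \(H:=\bigcap_n\ker\varphi_n\), which is a closed normal subgroup. The map \(P\to\prod_{n\ge1}C_p\), \(g\mapsto(\varphi_n(g))_n\), is continuous. Its image is dense, because linear independence makes every finite projection surjective. Since \(P\) is compact, the image is also closed, so the map is surjective and \(P/H\simeq\prod_{n\ge1}C_p\).

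The main step is proving \(\dim_{\mathbb F_p}H^1_{\mathrm{cts}}(P;\mathbb F_p)=\infty\). Write \(P=G_L\) for \(L=\overline F^{\,P}\), an algebraic extension of \(F\) whose finite subextensions have degree prime to \(p\). Then \(H^1(G_L;\mathbb F_p)=\varinjlim_{L'}H^1(G_{L'};\mathbb F_p)\) over finite subextensions \(F\subseteq L'\subseteq L\). The transition maps are injective: restriction followed by corestriction is multiplication by \([L'':L']\), which is prime to \(p\). So it suffices to show \(\dim H^1(G_F;\mathbb F_p)=\infty\) when \(F\) has residue characteristic \(p\), and then apply this to \(L'=F\).

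I split into the two cases that can occur.

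\emph{Equal characteristic}, \(F=\mathbb F_q((t))\). Artin--Schreier theory gives \(H^1(G_F;\mathbb F_p)\simeq F/\wp(F)\) with \(\wp(x)=x^p-x\). The classes of \(t^{-n}\) for \(n\ge1\) prime to \(p\) are linearly independent. To see this, note that a nonzero combination has a pole of order prime to \(p\). Such an element cannot lie in \(\wp(F)\), since \(\wp(x)\) has pole order divisible by \(p\) whenever \(x\) has a pole, and lies in \(\mathcal O_F\) otherwise.

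\emph{Mixed characteristic}, \([F:\mathbb Q_p]<\infty\). Here local Tate duality and the Euler characteristic formula give
\[
 \dim H^1(G_F;\mathbb F_p)=[F:\mathbb Q_p]+1+\delta ,
\]
where \(\delta\in\{0,1\}\) is \(1\) exactly when \(\mu_p\subset F\). This is finite, so I instead run the injectivity argument on the tower: the dimension for \(L'\) is at least \([L':\mathbb Q_p]+1\), which grows without bound as \(L'\) ranges over a cofinal family of finite subextensions of \(L\).

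The main obstacle is the case where \(L/F\) has bounded degree. This happens only when \(G_F/P\) is finite, which is impossible: the unramified \(\hat{\mathbb Z}\)-quotient of \(G_F\) has infinite prime-to-\(p\) part, and it maps onto a quotient of \(G_F/P\) through the prime-to-\(p\) image. So \(L\) contains the unramified extensions of every degree prime to \(p\), and the degrees \([L':\mathbb Q_p]\) are unbounded.

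I expect the only real care to be needed in making this Sylow bookkeeping rigorous, for instance by replacing \(P\) by a conjugate containing a chosen pro-\(p\) Sylow of inertia, and in the mixed-characteristic count. The passage from infinite dimension to \(\prod_{n\ge1}C_p\) is routine Pontryagin duality.
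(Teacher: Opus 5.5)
Your proof is correct, but in mixed characteristic it takes a genuinely different route from the paper. The paper splits by characteristic.
\begin{itemize}
\item For $\operatorname{char}(F)=p$ it uses Artin--Schreier as you do. It proves injectivity of $H^1(G_F;\mathbb F_p)\to H^1(P;\mathbb F_p)$ by noting that a $p$-Sylow subgroup surjects onto every quotient of order $p$, whereas you use $\operatorname{cor}\circ\operatorname{res}$.
\item For $\operatorname{char}(F)=0$ it cites Labute's theorem that $P$ has rank $\aleph_0$, and takes $H=\Phi(P)$.
\end{itemize}

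You instead reduce both cases to $\dim_{\mathbb F_p}H^1_{\mathrm{cts}}(P;\mathbb F_p)=\infty$. You bound this from below using three facts:
\begin{itemize}
\item $H^1(P;\mathbb F_p)=\varinjlim_{L'}H^1(G_{L'};\mathbb F_p)$, with transition maps injective because the indices are prime to $p$;
\item the Tate Euler characteristic formula, giving $\dim H^1(G_{L'};\mathbb F_p)\geq[L':\mathbb Q_p]+1$;
\item $L=\overline F^{\,P}$ contains the unramified extensions of every degree prime to $p$, so these degrees are unbounded.
\end{itemize}

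The paper's route is shorter and yields a canonical $H=\Phi(P)$ with the exact rank, but it rests on a deep external structural result. Yours uses only standard local Galois cohomology and treats both characteristics by one mechanism. The cost is that your $H$ depends on a choice of characters, which the statement permits.

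One phrasing fix. $P$ need not be normal, so ``$G_F/P$ is finite'' should read ``$P$ is open in $G_F$''. The clean argument: for $p\nmid n$, the composite $G_F\to\widehat{\mathbb Z}\to\mathbb Z/n$ kills the pro-$p$ group $P$, so the unramified extension of degree $n$ lies in $L$. No conjugation of $P$ relative to inertia is needed.
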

\begin{proof}
First suppose \(\operatorname{char}(F)=p\). The Artin--Schreier sequence
and additive Hilbert 90 give
\(
 H^1(G_F,\mathbb F_p)\simeq F/(a^p-a).
\) If \(\pi\) is a uniformizer, the classes \(\pi^{-n}\) for positive \(n\)
not divisible by \(p\) are linearly independent. Indeed a nonzero finite
linear combination has negative valuation not divisible by \(p\),
whereas for \(v(a)<0\) one has \(v(a^p-a)=pv(a)\) and for \(v(a)\geq0\)
one has \(v(a^p-a)\geq0\).

Restriction \(H^1(G_F,\mathbb F_p)\to H^1(P,\mathbb F_p)\) is injective:
a nonzero character \(G_F\to C_p\) has a quotient of order \(p\), and
the image of a \(p\)-Sylow in that quotient must be all of it. We can
therefore choose countably many independent continuous characters
\(P\to C_p\). Their product map \(P\to \prod_{n\geq1}C_p\) is surjective, since its image is dense by linear independence and
closed by compactness.

If \(\operatorname{char}(F)=0\), then \(F/\mathbb Q_p\) is finite. By
 \cite[Theorem~5]{Labute},  \(P\) has rank \(\aleph_0\). Hence its Frattini
quotient \(P/\Phi(P)\) is the countably infinite elementary abelian
pro-\(p\) group \(\prod_{n\geq1}C_p\). Then we can take \(H=\Phi(P)\).
\end{proof}

\begin{lemma}
\label{lem3.3}
Let \(W\) be a profinite group and let \(E\leq W\) be a closed
subgroup. Restriction induces a continuous map \(r\colon \mathcal{V}_E\longrightarrow \mathcal{V}_W\)
such that \( r(\mathfrak m_E)=\mathfrak m_W,\) \(r^{-1}(\mathfrak m_W)=\{\mathfrak m_E\}.\)
\end{lemma}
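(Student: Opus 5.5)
The plan is to work throughout with the identification \(\mathcal V_G\simeq \operatorname{Spec}^h H^\bullet_{\mathrm{cts}}(G;k)\) from the Notation, and to take \(r\) to be the map \(\operatorname{Spec}^h(\operatorname{res}^W_E)\) induced by the graded ring map \(\operatorname{res}^W_E\colon H^\bullet_{\mathrm{cts}}(W;k)\to H^\bullet_{\mathrm{cts}}(E;k)\). So \(r(\mathfrak p)=(\operatorname{res}^W_E)^{-1}(\mathfrak p)\). Since restriction preserves degree, the preimage of a homogeneous prime is a homogeneous prime, and Zariski continuity is the usual fact that \(r^{-1}(V(I))=V(\operatorname{res}(I))\). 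If one prefers to stay on the tt side, the same map is \(\operatorname{Spc}(\operatorname{Res}^W_E)\) for the tt-functor \(\operatorname{Res}\colon D^b(kW)\to D^b(kE)\), and it is compatible with the comparison maps. One should note that \(H^\bullet\) need not be noetherian, so the identification with \(\operatorname{Spec}^h\) is being taken as given in the Notation.

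First, \(r(\mathfrak m_E)=\mathfrak m_W\). Restriction is the identity on \(H^0=k\) and sends positive degrees to positive degrees, so \(\operatorname{res}^{-1}(H^{>0}(E;k))\supseteq H^{>0}(W;k)\). Conversely, a degree-\(0\) element \(\lambda\in k\) maps to \(\lambda\), which lies in \(H^{>0}(E)\) only if \(\lambda=0\). Hence the preimage is exactly \(H^{>0}(W;k)\).

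Second, \(r^{-1}(\mathfrak m_W)=\{\mathfrak m_E\}\), which is the key step. Suppose \(\mathfrak q\) is a homogeneous prime of \(H^\bullet(E)\) with \(\operatorname{res}^{-1}(\mathfrak q)=H^{>0}(W)\). Then \(\mathfrak q\) contains \(\operatorname{res}(H^{>0}(W))\), so it contains its radical. I would show that every homogeneous \(x\in H^{>0}(E;k)\) has a power lying in the image of restriction, up to the ideal it generates. This is the profinite version of Quillen--Venkov/Evens: \(H^\bullet(E)\) is integral over the image of \(H^\bullet(W)\).

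To make this work for profinite groups, I would write \(H^\bullet_{\mathrm{cts}}(E)=\operatorname{colim}_U H^\bullet(E/(E\cap U))\) over open normal \(U\trianglelefteq W\). Each \(x\) then comes from a finite quotient \(E U/U\leq W/U\). For finite groups, Evens' norm gives \(\mathcal N(x)\in H^\bullet(W/U)\) whose restriction to \(EU/U\) is a power \(x^{[W:EU]}\) modulo nilpotents (Evens' theorem that \(H^\bullet(H)\) is integral over \(H^\bullet(G)\)). Inflating to \(W\) and restricting to \(E\) then puts some power \(x^m\) into \(\operatorname{res}(H^{>0}(W))+\sqrt{0}\subseteq\mathfrak q\). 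Primality gives \(x\in\mathfrak q\), so \(\mathfrak q\supseteq H^{>0}(E)\). Since \(\mathfrak q\) is proper, it must equal \(\mathfrak m_E\) (degree-\(0\) units are excluded).

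I expect the main obstacle to be this integrality step, specifically making it compatible with the colimit. The saving point is that integrality is elementwise and each class lives at a finite stage, so no uniform bound is needed.

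An alternative route, if one wants to avoid Evens norms, uses tt-language. \(\mathfrak m_W\) is the unique closed point, and its preimage under \(\operatorname{Spc}(\operatorname{Res})\) is computed via \(\operatorname{supp}(\operatorname{Res}(\operatorname{Ind}^W_E k))\). The fact that \(\operatorname{Res}^W_E\) of objects in \(\mathcal V_W\)-support \(\{\mathfrak m_W\}\) (the "projective-like" ones) detects only \(\mathfrak m_E\) again reduces to finite quotients. I would take the norm argument as primary.
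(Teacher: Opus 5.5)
Your argument is correct in outline and is essentially the paper's. Both proofs reduce to the finite quotients \(EU/U\leq W/U\). The paper writes \(r\) as the inverse limit of the finite restriction maps \(r_U\) on spectra and checks each coordinate \(\mathfrak q_U\). You work elementwise in \(H^\bullet_{\mathrm{cts}}(E;k)=\operatorname{colim}_U H^\bullet(EU/U;k)\), which is the same reduction on the ring side. The one extra thing you do is justify the finite-group fiber statement, which the paper cites as standard.

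That finite step needs correcting. Evens' norm does not restrict to \(x^{[W:EU]}\) modulo nilpotents. The double coset formula gives \(x\) times norms of restricted conjugates \({}^gx\) over the nontrivial double cosets. In general no power of \(x\) lies in \(\operatorname{res}(H^{>0})+\sqrt0\) at all. For example, take a Klein four subgroup \(V\trianglelefteq D_8\) at \(p=2\). The image of restriction lies in the invariants of the reduced ring \(H^\bullet(V;\mathbb F_2)=\mathbb F_2[s,t]\) under the transvection \(t\mapsto s+t\), and no power of \(t\) is invariant.

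What you should use is exactly the integrality you cite in parentheses, i.e. Evens--Venkov finite generation. It gives \(x_U^n+a_1x_U^{n-1}+\cdots+a_n=0\) with \(a_i=\operatorname{res}(b_i)\). Taking the degree-\(n\deg x\) component, you may assume each \(b_i\) is homogeneous of degree \(i\deg x>0\). After inflating to \(E\), this puts \(x^n\) in the \emph{ideal} generated by \(\operatorname{res}^W_E(\mathfrak m_W)\). That ideal lies in \(\mathfrak q\) because \(\operatorname{res}^{-1}(\mathfrak q)=\mathfrak m_W\). Primality then gives \(x\in\mathfrak q\), and the rest of your argument goes through unchanged.
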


\begin{proof}
Let
\(\rho\colon H^\bullet_{\mathrm{cts}}(W;k)\longrightarrow H^\bullet_{\mathrm{cts}}(E;k)\) be the restriction homomorphism. Since \(\rho\) preserves degrees and
is the identity in degree zero,
\(\rho^{-1}(\mathfrak m_E)=\mathfrak m_W\). This proves \(r(\mathfrak m_E)=\mathfrak m_W\).

The fiber statement is a standard fact for finite groups. Now let \(W\) be profinite. We have \(\mathcal{V}_W\simeq\varprojlim_U \mathcal{V}_{W/U}\) and \(\mathcal{V}_E\simeq\varprojlim_U \mathcal{V}_{EU/U},\)
and \(r\) is the inverse limit of the finite restriction maps \(r_U\colon \mathcal{V}_{EU/U}\to \mathcal{V}_{W/U}\). Suppose \(\mathfrak q\in \mathcal{V}_E\) satisfies
\(r(\mathfrak q)=\mathfrak m_W\) with \(\mathfrak q_U\in \mathcal{V}_{EU/U}\) its coordinate. Then \(r_U(\mathfrak q_U)=\mathfrak m_{W/U}\). By the finite-group case,
\(\mathfrak q_U=\mathfrak m_{EU/U}\) for every \(U\). The inverse-limit description therefore gives \(\mathfrak q=\mathfrak m_E\). Hence
\(r^{-1}(\mathfrak m_W)=\{\mathfrak m_E\}.\)
\end{proof}

\begin{proposition}\label{prop3.4}

If \(F\) is a nonarchimedean local field with residue characteristic \(p\), then \(\operatorname{DAM}(F;k)\) is not stratified.

\end{proposition}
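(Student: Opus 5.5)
Since stratification of a rigidly-compactly generated tt-category forces every point of the Balmer spectrum of its compact part to be weakly visible (this is what the local-to-global principle and the support bijection need; by Barthel--Heard--Sanders, stratification implies $\operatorname{Spc}(\mathscr T^c)$ is weakly noetherian), the plan is to exhibit a point of $X_{G_F}=\operatorname{Spc}(\mathscr K(G_F))$ that is not weakly visible. Via Recollection~\ref{rec2.4}, this shows $\operatorname{DAM}(F;k)\simeq\mathscr T(G_F)$ is not stratified.

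\textbf{Step 1 (a non-visible cohomological point).} Let $P\leq G_F$ be a $p$-Sylow subgroup. By Lemma~\ref{lem3.2}, $P$ has a closed normal $H$ with $E:=P/H\simeq\prod_{n\ge1}C_p$. I will show that $\mathfrak m_E$ is not weakly visible in $\mathcal V_E$. Here $H^\bullet_{\mathrm{cts}}(E;k)=\varinjlim$ of cohomology of the finite quotients $C_p^n$, and $H^1(E;k)$ is infinite-dimensional. Thomason closed subsets of $\mathcal V_E$ are of the form $V(f_1,\dots,f_r)$ with $f_i$ homogeneous. If $\{\mathfrak m_E\}=V(I)\cap W^c$ with $I$ finitely generated and $W$ Thomason, then $\mathfrak m_E$ is an isolated point of $V(I)$ in the specialization order, so it must be minimal over $I$. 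But finitely many $f_i$ come from some finite stage $C_p^n$, and $H^\bullet(E)$ contains polynomial-type elements (Bocksteins, or for $p=2$ the degree-one classes) in infinitely many further independent variables. Hence $V(I)$ contains homogeneous primes strictly smaller than $\mathfrak m_E$, for instance those killing all but one extra variable. Making the minimality contradiction precise also requires $\mathfrak m_E\notin W$. I expect this to be the main obstacle: it needs a clean description of $\mathcal V_E$ as the inverse limit of $\mathcal V_{C_p^n}$, which may be easier to run at finite levels through the projections.

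\textbf{Step 2 (transport to $X_{G_F}$).} Consider the point $x=\mathcal P_{G_F}(H,\mathfrak m)$ with $\mathfrak m=\mathfrak m_{N_{G_F}(H)/H}$. Since $H$ is pro-$p$, this is a legitimate point of $X_{G_F}$. Set $W:=N_{G_F}(H)/H\supseteq E$. By Lemma~\ref{lem3.3}, the restriction map $r\colon\mathcal V_E\to\mathcal V_W$ has $r^{-1}(\mathfrak m_W)=\{\mathfrak m_E\}$. So by Lemma~\ref{lem3.1} applied to $r$, if $\mathfrak m_W$ were weakly visible then $\mathfrak m_E$ would be, contradicting Step~1. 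Thus $\mathfrak m_W$ is not weakly visible.

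It remains to pass from $\mathcal V_W$ to $X_{G_F}$. One route uses the modular fixed-point functor $\Psi^H\colon\mathscr K(G_F)\to\mathscr K(W)$ followed by $q_W$. This gives a spectral map $\phi\colon\mathcal V_W\to X_{G_F}$ that realizes the stratum of $H$. I would then check that $\phi$ is injective on its stratum, so that Lemma~\ref{lem3.1} applies directly to the fiber over $x$. Alternatively, I would argue that weak visibility of $x$ in $X_{G_F}$ pulls back along the spectral map $\phi$ (preimages of Thomason subsets are Thomason), giving $\{\mathfrak m_W\}=\phi^{-1}(V)\cap\phi^{-1}(W')^c$ provided $\phi^{-1}(x)=\{\mathfrak m_W\}$. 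That fiber statement follows from the uniqueness of the parameterisation $x=\mathcal P(H,\mathfrak q)$ in Recollection~\ref{rec2.6}. Hence $x$ is not weakly visible, and $\operatorname{DAM}(F;k)$ is not stratified.
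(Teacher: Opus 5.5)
Your proposal is correct and is essentially the paper's proof: the same $E=P/H$ from Lemma~\ref{lem3.2}, the same point $x=\mathcal P_{G_F}(H,\mathfrak m_W)$ with singleton fibre $\{\mathfrak m_E\}$ (Lemma~\ref{lem3.3} plus injectivity of the stratum map), and Lemma~\ref{lem3.1}. The differences are minor: you prove non-weak-visibility of $\mathfrak m_E$ by hand where the paper cites Balmer--Gallauer's Example~6.10, the paper concludes via Zou's Theorem~A rather than Barthel--Heard--Sanders, and your anticipated obstacle in Step~1 is not real, since $\mathfrak m_E$ is the unique closed point of $\mathcal V_E$, any specialization-closed $W$ avoiding it is empty, and weak visibility would force $\mathfrak m_E=\sqrt{(f_1,\dots,f_r)}$, which your extra-variable prime refutes without any inverse-limit bookkeeping.
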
\begin{proof}

Choose \(P,H\) as in Lemma \ref{lem3.2} and write \(E:=P/H\simeq\prod_{n\geq1}C_p\). Since
\(H\trianglelefteq P\), we have \(P\leq N_{G_F}(H)\), and the induced injection
\[E\longrightarrow W:=N_{G_F}(H)/H\] has compact, and hence closed image. Thus \(E\) is a closed subgroup
of \(W\). Consider
\[\mathcal V_E\xrightarrow{r}\mathcal V_W\xrightarrow{\check\psi_H}X_F\]
where the second map is an injective spectral map by Recollection~\ref{rec2.6}. It follows from Lemma \ref{lem3.3}
 that the fiber over \( x=\check\psi_H(\mathfrak m_W)\)
is exactly \(\{\mathfrak m_E\}\). If \(X_{G_F}\) is weakly noetherian,
then \(x\) would be weakly visible and Lemma
\ref{lem3.1} would make \(\mathfrak m_E\) weakly
visible. This contradicts \cite[Example~6.10]{BalmerGallauerArtin}. Thus
\(X_{G_F}\) is not weakly noetherian, and so \(\operatorname{DAM}(F;k)\) is not stratified by \cite[Theorem~A]{Zou}.
\end{proof}

\begin{remark}
The same argument shows that \(\operatorname{DAM}(F;k)\) is not stratified for every global field \(F\). Indeed, global class field
theory and Artin--Schreier theory give
\(\dim_{\mathbb F_p}H^1_{\mathrm{cts}}(G_F;\mathbb F_p)=\infty\), so a
\(p\)-Sylow subgroup of \(G_F\) surjects onto \(\prod_{n\geq1}C_p\) and then
the proof of Proposition~\ref{prop3.4} applies verbatim.
\end{remark}

\section{\texorpdfstring{\(p\)}{p}-decomposition groups and their spectra}\label{sec:pdecomp}

We now begin the proof of the tame direction of the classification. Its basic group-theoretic model is a \(p\)-decomposition group
\(P=A\rtimes_{\chi}B\), which will arise as a \(p\)-Sylow subgroup controlling the  tensor-triangular geometry of the absolute
Galois group in the tame residue-characteristic case. In this section we analyze the closed subgroups of \(P\), determine the possible Weyl
pro-\(p\) groups and their reduced continuous cohomology. Then we prove that \(X_P\) is generically noetherian and
Hochster weakly scattered. Consequently, \(\mathscr T(P)\) satisfies the local-to-global principle.

\begin{definition}
\label{def4.1}
 A \emph{ \(p\)-decomposition group} is a pro-\(p\) group equipped with a decomposition \(P=A\rtimes_{\chi}B,\) where
\(A\simeq B\simeq(\mathbb Z_p,+)\) and \(\chi\colon B\longrightarrow\operatorname{Aut}_{\mathrm{cts}}(A)\simeq \mathbb Z_p^\times\)
is a continuous injective homomorphism.
\end{definition}
\medskip

We fix additive coordinates on \(A\) and \(B\), and write the group law as \((a,b)(a',b')=\bigl(a+\chi(b)a',\,b+b'\bigr).\)
The canonical projection is denoted by \(\pi\colon P\longrightarrow B\) with \(\pi(a,b)=b,\) so that \(A=\ker(\pi)\).

\begin{remark}
Our definition of a \(p\)-decomposition group is a slight modification of Ivanov's definition \cite[Definition~2.1]{Ivanov}: we additionally
require the action \(\chi\) to be faithful. This excludes only the nonfaithful inversion action at \(p=2\).
\end{remark}
\begin{lemma}\label{lem4.3}
Let \(P=A\rtimes_\chi B\) be a \(p\)-decomposition group. For every closed subgroup \(H\leq P\), the Weyl pro-\(p\)
group \(W_P(H)=N_P(H)/H\) is one of the following:
\[
\begin{array}{c|c}
H & W_P(H)\\ \hline
1 & P,\\
H\text{ open} & \text{a finite }p\text{-group},\\
H=p^rA\ (r\geq0) & (C_{p^r})\rtimes\mathbb Z_p,\\
H\not\leq A\text{ procyclic} & \text{a finite cyclic }p\text{-group}.
\end{array}
\]
Moreover, the \(P\)-conjugacy classes of nonopen closed subgroups form a countable set.
\end{lemma}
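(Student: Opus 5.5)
We first classify the closed subgroups \(H\leq P\) and then compute \(N_P(H)\) case by case. Set \(H_A:=H\cap A\), a closed subgroup of \(A\simeq\mathbb Z_p\), so \(H_A\in\{0\}\cup\{p^rA\}\). Likewise \(\pi(H)\leq B\) is either \(0\) or \(p^sB\). This gives four possibilities.

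\emph{Case (a): \(H_A=p^rA\) and \(\pi(H)=p^sB\).} Here \(H\) is open, since \([P:H]=p^{r+s}\). Then \(W_P(H)\) is a finite \(p\)-group.

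\emph{Case (b): \(H_A=0\) and \(\pi(H)=0\).} Then \(H=1\) and \(W_P(1)=P\).

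\emph{Case (c): \(\pi(H)=0\).} Then \(H=p^rA\). Every \(p^rA\) is characteristic in \(A\) and \(A\) is normal in \(P\), so \(p^rA\trianglelefteq P\). Hence
\[
W_P(p^rA)=P/p^rA\simeq (A/p^rA)\rtimes B\simeq C_{p^r}\rtimes\mathbb Z_p .
\]

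\emph{Case (d): \(H_A=0\) and \(\pi(H)=p^sB\neq0\).} Then \(\pi|_H\) is injective, so \(H\simeq\mathbb Z_p\) is procyclic and \(H\not\leq A\). This is the only case needing computation.

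For case (d), write \(H=\overline{\langle h\rangle}\) with \(h=(a_0,b_0)\), \(b_0\neq0\). Since \(\chi\) is injective, \(c:=\chi(b_0)\neq1\). A direct calculation gives
\[
(a,0)\,h\,(a,0)^{-1}=\bigl(a_0+(1-c)a,\;b_0\bigr).
\]
More generally, any \(g\) conjugates \(h\) to an element with \(B\)-coordinate \(b_0\). Since \(H_A=0\), the only element of \(H\) with \(B\)-coordinate \(b_0\) is \(h\) itself. So \(g\in N_P(H)\) forces \(ghg^{-1}=h\), and hence \(N_P(H)=C_P(h)\).

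The centralizer \(C_A(h)\) is trivial: the condition \((1-c)a=0\) forces \(a=0\), because \(c\neq1\) and \(\mathbb Z_p\) is a domain. Therefore \(\pi\) embeds \(C_P(h)\) into \(B\), so \(C_P(h)\) is procyclic and contains \(H\). Its image \(\pi(C_P(h))\supseteq p^sB\) is open, so \(H\) has finite index in \(C_P(h)\). Consequently \(W_P(H)=C_P(h)/H\) is a finite cyclic \(p\)-group.

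For countability, the nonopen classes are \(\{1\}\), the subgroups \(p^rA\) for \(r\geq0\), and the case (d) subgroups. It remains to count the case (d) subgroups up to conjugacy. Each such \(H\) is determined by its unique element with \(B\)-coordinate \(p^s\), call it \((a_0,p^s)\), with \(a_0\in A\) arbitrary. By the conjugation formula, conjugating by \(A\) changes \(a_0\) by \((1-\chi(p^s))A\). Since \(\chi(p^s)\neq1\), the subgroup \((1-\chi(p^s))A=p^{t}A\) is open in \(A\), with \(t=v_p(1-\chi(p^s))\). So for each \(s\) there are at most \(p^{t}\) conjugacy classes, and countably many in total.

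The main obstacle is case (d), specifically identifying \(N_P(H)\) with \(C_P(h)\). This requires care with closures when \(H\) is only topologically generated by \(h\), and with making sure that conjugates of \(h\) lying in \(H\) are tracked correctly. The point is that the \(B\)-coordinate is conjugation-invariant (\(B\) is abelian) and \(\pi|_H\) is injective, so any conjugate of \(h\) lying in \(H\) must equal \(h\). Faithfulness of \(\chi\) is used exactly to get \(c\neq1\), and that is what makes \(W_P(H)\) finite in this case.
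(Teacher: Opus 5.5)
Your proof is correct and follows the paper's outline. You use the same case split on \(H\cap A\) and \(\pi(H)\). You show \(N_P(H)=C_P(H)\) by the same argument: \(B\) is abelian and \(\pi|_H\) is injective. Your countability count uses the same open subgroup \((1-\chi(p^s))A\).

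The real difference is how you show that \(C_P(H)\) is procyclic with \(H\) of finite index. The paper does this in three steps:
\begin{enumerate}
\item It proves that \(U\cap A\) is the unique largest normal procyclic subgroup of every open \(U\leq P\).
\item It deduces that \(C_P(H)\) cannot be open.
\item It then uses the earlier fact that nontrivial nonopen closed subgroups are procyclic.
\end{enumerate}
You argue directly instead. Since \(\chi(b_0)\neq1\) and \(A\) is torsion-free, \(C_A(h)\) is trivial. So \(\pi\) embeds \(C_P(h)\) into \(B\), and \([C_P(h):H]=[\pi(C_P(h)):p^sB]<\infty\).

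Both routes use faithfulness of \(\chi\) at the same point: an element with nonzero \(B\)-coordinate fixes no nonzero element of \(A\). Your version skips the detour through open subgroups and gives the explicit description \(W_P(H)\simeq\pi(C_P(h))/p^sB\). The paper's version yields a structural fact about \(P\) that goes beyond what this lemma needs.

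For the count, bounding the classes for fixed \(s\) using \(A\)-conjugation alone is enough for countability. The paper goes further and identifies these classes exactly with the orbit set \((A/(1-\chi(p^s))A)/\chi(B)\).
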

\begin{proof}
We first show that every nontrivial nonopen closed subgroup of \(P\) is procyclic. Put \(L=H\cap A\) and let \(\pi:P\to B\) be projection. Both
\(L\) and \(\pi(H)\) are closed subgroups of \(\mathbb Z_p\), hence are either trivial or open. If both \(L\) and \(\pi(H)\) are nontrivial, then they have finite index in
\(A\) and \(B\), respectively. Since \(AH=\pi^{-1}\bigl(\pi(H)\bigr),\) the index formula gives
\[
    [P:H]
      =[P:AH][AH:H]
      =[B:\pi(H)][A:L]
      <\infty.
\]
As \(H\) is closed, it is therefore open in \(P\). If \(L=1\), projection identifies \(H\) with the closed subgroup
\(\pi(H)\) of \(B\), so \(H\) is trivial or procyclic. If \(\pi(H)=1\), then \(H\leq A\) and is again procyclic. Thus every nontrivial nonopen
\(H\) is procyclic, for every prime \(p\).

We next identify the normal procyclic subgroups of open subgroups of
\(P\). Let \(U\leq_o P\). Since \(U\cap A=\ker(\pi|_U),\)
the subgroup \(U\cap A\) is normal in \(U\). Moreover, since \(U\) is open in \(P\), the subgroup \(U\cap A\) is open in
\(A\simeq\mathbb Z_p\), and is therefore procyclic.

Let \(C\trianglelefteq U\) be a procyclic subgroup. We claim that \(C\leq A\). Suppose, to the contrary, that \(C\not\leq A\). We first
show that \(C\cap A=1.\) Indeed, if \(C\cap A\neq1\), then \(C\cap A\) is a nontrivial closed subgroup of the procyclic pro-\(p\) group \(C\), and hence has finite
index in \(C\). Since \(\ker(\pi|_C)=C\cap A,\) we have
\(\pi(C)\simeq C/(C\cap A),\) so \(\pi(C)\) is finite. But \(\pi(C)\leq B\simeq\mathbb Z_p\), and
\(\mathbb Z_p\) has no nontrivial finite subgroup. Thus \(\pi(C)=1\), which implies \(C\leq A\), a contradiction. Hence \(C\cap A=1\).

Since \(C\trianglelefteq U\) and \(U\cap A\trianglelefteq U\), we have \([C,U\cap A]\leq C\cap(U\cap A)=C\cap A=1.\)
Thus \(C\) centralizes \(U\cap A\). Since \(C\not\leq A=\ker(\pi)\), choose \(c\in C\) such that \(b:=\pi(c)\neq0.\)
Also choose \(0\neq a\in U\cap A\). Conjugation by \(c\) on \(A\) is multiplication by \(\chi(b)\), and hence
\(a=cac^{-1}=\chi(b)a.\) It follows that \((\chi(b)-1)a=0.\) Since \(A\simeq\mathbb Z_p\) and \(a\neq0\), we obtain
\(\chi(b)=1\). The faithfulness of \(\chi\) then gives \(b=0\), contradicting the choice of \(b\). Therefore every normal procyclic subgroup \(C\trianglelefteq U\) is
contained in \(A\), and hence in \(U\cap A\). Since \(U\cap A\) is itself normal and procyclic, it is the unique largest normal procyclic subgroup of \(U\).

If \(0\neq H\leq A\), then there is a unique \(r\geq0\) such that \(H=p^rA.\) Conjugation by an element \((a,b)\in P\) acts on \(A\) as
multiplication by the unit \(\chi(b)\), and hence preserves \(p^rA\). Thus \(p^rA\trianglelefteq P\), so
\(W_P(p^rA)=P/p^rA\simeq (A/p^rA)\rtimes_{\bar\chi}B\simeq C_{p^r}\rtimes_{\bar\chi}\mathbb Z_p.\)

Suppose now that \(H\) is procyclic and not contained in \(A\). Then \(H\cap A=1\). Indeed, otherwise \(H\cap A\) would have finite index
in \(H\), so \(\pi(H)\simeq H/(H\cap A)\) would be a finite subgroup of \(B\simeq\mathbb Z_p\), hence trivial.
This would imply \(H\leq A\), a contradiction. Therefore the restriction \(\pi|_H:H\longrightarrow B\) is injective.

If \(g\in N_P(H)\), then \(ghg^{-1}\in H\) for every \(h\in H\). Since \(B\) is abelian, \(\pi(ghg^{-1})=\pi(h).\)
The injectivity of \(\pi|_H\) gives \(ghg^{-1}=h\). Thus \(g\) centralizes \(H\), and consequently \(N_P(H)=C_P(H).\) The group \(C_P(H)\) cannot be open in \(P\). Indeed, if it were open, then \(H\), being central in \(C_P(H)\), would be a normal procyclic subgroup of an open subgroup of \(P\). The preceding paragraph would
then imply \(H\leq C_P(H)\cap A\leq A,\) contrary to \(H\not\leq A\).

Now \(C_P(H)\) is a nontrivial closed nonopen subgroup of \(P\): it contains \(H\), and centralizers are closed. By the first paragraph,
\(C_P(H)\) is therefore procyclic. Since \(H\) is a nontrivial closed subgroup of the procyclic pro-\(p\) group \(C_P(H)\), it has finite
index. Hence \(W_P(H)=C_P(H)/H\) is a finite cyclic \(p\)-group.

For countability, the nontrivial closed subgroups contained in \(A\) form the countable chain
\[
    A\geq pA\geq p^2A\geq\cdots.
\]
Now let \(H\) be a procyclic subgroup not contained in \(A\). Since \(\pi|_H\) is injective, its image is a nontrivial closed subgroup of
\(B\simeq\mathbb Z_p\), and hence \(\pi(H)=p^sB\) for a unique \(s\geq0\). The unique element of \(H\) projecting to
\(p^s\) is a topological generator of the form \((a,p^s)\), for some \(a\in A\). For \((c,d)\in P\), direct calculation gives
\[(c,d)(a,p^s)(c,d)^{-1}=\bigl(\chi(d)a+(1-\chi(p^s))c,\ p^s\bigr).\]
Put \(I_s=(1-\chi(p^s))A.\) Thus, for fixed \(s\), the conjugacy classes of such subgroups are parametrized by the orbit set \((A/I_s)/\chi(B),\) where \(\chi(B)\) acts by multiplication. Since \(p^s\neq0\) and \(\chi\) is faithful, we have \(\chi(p^s)\neq1\). Hence \(I_s\) is a nonzero ideal of \(A\simeq\mathbb Z_p\), and is therefore open. It
follows that \(A/I_s\) is finite, so there are only finitely many conjugacy classes for each \(s\). Since \(s\) ranges over \(\mathbb N\), the conjugacy classes of procyclic subgroups not contained in \(A\) form a countable set. Together with the chain \(p^rA\) and the trivial subgroup, this proves that the conjugacy
classes of nonopen closed subgroups are countable.
\end{proof}

\begin{proposition}\label{prop4.4}
Let \(P=A\rtimes_\chi B\) be a \(p\)-decomposition group and \(H\leq P\) a nonopen closed subgroup. Then for \(W=N_P(H)/H\), we have
\[
 H^\bullet_{\mathrm{cts}}(W;k)/\sqrt{0}\simeq
 \begin{cases}
  k,
    & W\simeq P,\ \mathbb Z_p,\ \text{or }1,\\[1mm]
  k[u],
    & W\simeq (C_{p^r})\rtimes\mathbb Z_p,\quad r\geq1,\\[1mm]
  k[u],
    & W\text{ is a nontrivial finite cyclic }p\text{-group},
 \end{cases}
\]
where \(u\) is homogeneous of positive degree. In particular, \(\left|\mathcal V_W\right|\leq2,\) and the locus of points of
\(X_P\) having subgroup parameter \([H]_P\) has at most two points.
\end{proposition}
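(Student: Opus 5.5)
Using Lemma~\ref{lem4.3}, the Weyl group \(W=N_P(H)/H\) of a nonopen closed \(H\) falls into exactly the cases listed: \(W\simeq P\) for \(H=1\); \(W\simeq A/p^rA\rtimes_{\bar\chi}B\) for \(H=p^rA\) (giving \(\mathbb Z_p\) when \(r=0\)); and a finite cyclic \(p\)-group, possibly trivial, when \(H\not\leq A\) is procyclic. I will compute the reduced cohomology ring case by case, working modulo nilpotents throughout, and then deduce the statements about \(\mathcal V_W\) and the fibre of \(\sigma_P\) from Recollection~\ref{rec2.6}.

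The elementary cases come first. For \(W=1\) the ring is \(k\). For \(W\simeq\mathbb Z_p\), continuous cohomology is \(k\) in degree \(0\), \(k\) in degree \(1\), and zero above, so everything of positive degree is nilpotent. For a nontrivial finite cyclic \(p\)-group, the standard computation gives \(H^\bullet(C_{p^n};k)\) equal to \(k[y]\otimes\Lambda(x)\) with \(|y|=2\) (or \(k[x]\) when \(p^n=2\)), so the reduced ring is \(k[u]\).

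For \(W\simeq P\), I use the Lyndon--Hochschild--Serre spectral sequence for \(1\to A\to P\to B\to1\). Both \(A\) and \(B\) have cohomological dimension \(1\), so \(E_2^{i,j}\) vanishes unless \(i,j\in\{0,1\}\). Hence \(H^n(P;k)=0\) for \(n\geq3\), and every positive-degree class is nilpotent, giving \(k\).

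The case \(W=C_{p^r}\rtimes\mathbb Z_p\) with \(r\geq1\) is the main obstacle. Again I apply LHS, now to \(1\to C_{p^r}\to W\to\mathbb Z_p\to1\). Since \(\operatorname{cd}\mathbb Z_p=1\), the spectral sequence has two columns, so it degenerates, and there are short exact sequences
\[
0\to H^1(\mathbb Z_p;H^{n-1}(C_{p^r};k))\to H^n(W;k)\to H^n(C_{p^r};k)^{\mathbb Z_p}\to0 .
\]
The first term is an \(H^\bullet(W)\)-submodule killed by any two of its elements' product, so it consists of nilpotents and squares to zero modulo the image. The key point is to show that the \(\mathbb Z_p\)-action on \(H^\bullet(C_{p^r};k)\) is trivial, or at least fixes a polynomial generator. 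I argue as follows. Since \(\chi\) takes values in \(\mathbb Z_p^\times\) and \(W\) is pro-\(p\), the reduction \(\bar\chi\) lands in \(1+p\mathbb Z_p\) modulo \(p^r\), a \(p\)-group. An automorphism \(x\mapsto\lambda x\) acts on \(H^1(C_{p^r};k)=\operatorname{Hom}(C_{p^r},k)\) via \(\lambda\bmod p=1\), and on the Bockstein/degree-two generator also by \(\lambda\bmod p=1\). So the invariants are all of \(H^\bullet(C_{p^r};k)\), whose reduction is \(k[y]\). It remains to check that some power of \(y\) lifts to \(H^\bullet(W;k)\); this follows from surjectivity onto the invariants. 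The reduced ring then maps onto \(k[y]\) with nilpotent kernel, so \(H^\bullet(W)/\sqrt0\simeq k[u]\).

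Finally, for \(k\) the homogeneous spectrum is the single point \(\mathfrak m\), and for \(k[u]\) it is \(\{(0),(u)\}\), so \(|\mathcal V_W|\leq2\). By the bijection \(\operatorname{Spc}(\mathscr K(P))=\coprod_{(H)}\mathcal V_{N_P(H)/H}\), the fibre \(\sigma_P^{-1}([H]_P)\) is \(\mathcal V_W\) and so has at most two points.
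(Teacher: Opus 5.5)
Your proposal is correct and follows essentially the same route as the paper. You use the case split from Lemma~\ref{lem4.3} and the two-column Lyndon--Hochschild--Serre argument for \(P\). For \(W\simeq C_{p^r}\rtimes\mathbb Z_p\) you use triviality of the \(\mathbb Z_p\)-action on \(H^\bullet(C_{p^r};k)\), which holds because \(\chi(B)\) consists of units congruent to \(1\) mod \(p\); this makes restriction surjective with kernel \(F^1\) squaring to zero, since the filtration is multiplicative and \(F^2=0\). The count \(\left|\mathcal V_W\right|\leq2\) and the fibre statement then follow, exactly as in the paper, from the homogeneous spectra of \(k\) and \(k[u]\) and the decomposition in Recollection~\ref{rec2.6}.
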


\begin{proof}
If \(W=P\), the Lyndon--Hochschild--Serre spectral sequence for
\[1\longrightarrow A\longrightarrow P\longrightarrow B\longrightarrow1\] is concentrated in total degrees at most two, since
\(\operatorname{cd}_p(A)=\operatorname{cd}_p(B)=1\). Thus every positive-degree element is nilpotent, and the reduced cohomology ring
is \(k\). The cases \(W\simeq\mathbb Z_p\) and \(W=1\) are immediate.

Suppose \(W\simeq C_{p^r}\rtimes\mathbb Z_p,\) \( r\geq1\). The continuous Lyndon--Hochschild--Serre spectral sequence
\[
 E_2^{i,j}
 =
 H^i_{\mathrm{cts}}\!\left(
   \mathbb Z_p;H^j(C_{p^r};k)
 \right)
 \Longrightarrow
 H^{i+j}_{\mathrm{cts}}(W;k)
\]
has only the columns \(i=0,1\). The action of \(\mathbb Z_p\) on \(H^\bullet(C_{p^r};k)\) is trivial: the corresponding automorphisms
of \(C_{p^r}\) are congruent to the identity modulo \(p\), and hence act trivially on the standard degree-one and degree-two generators.
It follows that the restriction map
\[
 H^\bullet_{\mathrm{cts}}(W;k)
 \longrightarrow H^\bullet(C_{p^r};k)
\]
is surjective. Its kernel is the first filtration piece and therefore has square zero. Consequently it induces an isomorphism on reduced rings.

The standard periodic resolution of \(C_{p^r}\) gives \(H^\bullet(C_{p^r};k)/\sqrt0\simeq k[u].\)
The same calculation proves the assertion for a nontrivial finite cyclic \(p\)-group. Finally, \(\operatorname{Spec}^h(k)\) has one point and
\(\operatorname{Spec}^h(k[u])\) has the two homogeneous primes \((0)\) and \((u)\). The final assertion follows from  \cite[Corollary~3.10]{BalmerGallauerArtin}.
\end{proof}

\begin{proposition}
\label{prop4.5}
For a \(p\)-decomposition group \(P\), the spectral space \(X_P\) is generically noetherian. In particular, \(X_P\) is weakly noetherian.
\end{proposition}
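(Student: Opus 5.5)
The plan is to verify directly that for every point \(x=\mathcal P_P(H,\mathfrak q)\in X_P\) the subspace \(\operatorname{gen}_{X_P}(x)\), with the induced topology, is a noetherian space. I would then deduce weak noetherianity from the standard implication ``generically noetherian \(\Rightarrow\) weakly noetherian'' for spectral spaces, as in \cite{BarthelHeardSanders} and \cite{Zou}. By Recollection~\ref{rec2.6}, every \(y\in\operatorname{gen}(x)\) has a subgroup parameter \([H']_P\) with \(H'\leq_P H\). So \(\operatorname{gen}(x)\) is the union, over the conjugacy classes \((H')\) of closed subgroups \(H'\leq_PH\), of the fibres \(\sigma_P^{-1}([H'])\cap\operatorname{gen}(x)\). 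Each such fibre embeds in \(\mathcal V_{W_P(H')}\). By Lemma~\ref{lem4.3} and Proposition~\ref{prop4.4}, each fibre is finite: for nonopen \(H'\) it has at most two points, and for open \(H'\) it sits inside the noetherian space \(\mathcal V\) of a finite \(p\)-group.

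The argument would split along the table of Lemma~\ref{lem4.3} according to \(H\).

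If \(H=1\) or \(H\) is transverse procyclic, the subgroups \(H'\leq_PH\) are only \(1\) and finitely many finite-index subgroups of the procyclic group \(H\). Then \(\operatorname{gen}(x)\) is finite and hence noetherian.

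If \(H=p^rA\), the candidates are \(1\) and the chain \(p^sA\) for \(s\geq r\). Here I would show that the open sets \(\operatorname{supp}(\operatorname{kos}_P(K))^c\), for \(K\leq_oP\), separate these strata. Under the criterion \(\operatorname{kos}_P(K)\in\mathcal P_P(H',\mathfrak q')\iff H'\leq_PK\), the open subgroups \(K=p^sA\rtimes p^tB\) distinguish the levels \(s\). From this I would conclude that a proper closed subset of \(\operatorname{gen}(x)\) meets only finitely many levels, or else contains the generic points over \(1\). This yields DCC on closed subsets.

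The case where \(H\) is open is the main obstacle. Then \(\operatorname{gen}(x)\) contains points with all four kinds of parameters, and infinitely many conjugacy classes occur, though countably many by Lemma~\ref{lem4.3}. Countability alone does not give noetherianity. I would argue that every open subset \(U\subseteq\operatorname{gen}(x)\) is quasi-compact. Given \(U\), its complement is specialization-closed in \(\operatorname{gen}(x)\). I would show that, off the finitely many fibres with open parameter, \(U\) is a finite union of sets of the form \(\operatorname{gen}(x)\cap\operatorname{supp}(\operatorname{kos}_P(K))^c\). The key input is that the nonopen parameters are organised into the chain \(\{p^sA\}\) together with, for each \(s\), finitely many transverse classes with \(\pi(H')=p^sB\), where finiteness comes from \(A/I_s\) being finite. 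Each family is cut out by a single open \(K\) containing \(p^sA\) or the relevant transverse generator \((a,p^s)\). To make this rigorous, I would first try the continuity description \(X_P\simeq\varprojlim_N X_{P/N}\) over open normal \(N\). Each \(X_{P/N}\) is a finite noetherian spectral space by the finite-group theory, and I would check that an open subset of \(\operatorname{gen}(x)\) is already pulled back from one finite stage \(P/N\). If this stage-wise descent fails for the transverse classes, the fallback is to handle them by hand, using the explicit conjugation formula from the proof of Lemma~\ref{lem4.3}.
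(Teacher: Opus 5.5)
\textbf{There is a genuine gap.} Your proposal never bounds which subgroup parameters can occur in \(\operatorname{gen}_{X_P}(x)\), and the picture you use instead is wrong. Knowing only that a generalization \(\mathcal P_P(L,\mathfrak r)\) of \(x=\mathcal P_P(H,\mathfrak q)\) has \(L\leq_P H\) is far too weak.

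\begin{itemize}
\item Your ``easy'' transverse case already fails as written. A procyclic \(H\simeq\mathbb Z_p\) has infinitely many finite-index subgroups \(p^nH\), all still transverse. So subconjugacy alone gives infinitely many candidate strata and no finiteness.
\item In the open case, your premise is false: \(\operatorname{gen}(x)\) does not contain points of all four types or infinitely many conjugacy classes. For instance, the unique point with trivial parameter maps at every finite stage \(P/N\) to the closed point \(\mathcal P_{P/N}(1,\mathfrak m_{P/N})\). Hence it is closed in \(X_P\) and lies in \(\operatorname{gen}(x)\) only when \(x\) is that point itself. This also contradicts your treatment of the \(p^rA\) case, where you expected points over \(1\) to appear.
\item The machinery you sketch for the open case is a plan, not an argument, and it aims at a space much larger than the actual one. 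That machinery is quasi-compactness of opens, plus descent of open subsets of \(\operatorname{gen}(x)\) to a finite stage, with a fallback ``by hand''.
\item A minor point: for open \(H'\), the fibre \(\mathcal V_{W_P(H')}\) is noetherian but in general not finite.
\end{itemize}

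\textbf{The missing idea is the paper's index bound.} If \(\mathcal P_P(L,\mathfrak r)\) generalizes \(\mathcal P_P(H,\mathfrak q)\), then after conjugation \(L\leq H\) and \([H:L]\leq p^2\). The paper proves this in three steps.
\begin{enumerate}
\item It works at finite stages \(Q_n=P/N_n\). These are metacyclic, so their elementary abelian sections have rank at most \(2\).
\item Balmer--Gallauer's detection of specializations on elementary abelian sections lifts the specialization \(x_n\in\overline{\{z_n\}}\) to such a section. There the subgroup parameters satisfy \(J_z\leq J_x\) with index at most \(p^2\). Translating back gives \(L^{g_n}N_n\leq HN_n\) with index at most \(p^2\).
\item The sets of such \(g_n\) are nonempty, clopen and nested. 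Compactness then gives a single \(g\) with \(L^g\leq H\) and \([H:L^g]\leq p^2\).
\end{enumerate}
Since \(H\) is topologically finitely generated, only finitely many classes \([K_1],\dots,[K_m]\) can occur. So \(\operatorname{gen}_{X_P}(x)\) lies in a finite union of continuous images of the noetherian spaces \(\mathcal V_{W_P(K_i)}\) (Lemma~\ref{lem4.3}, Proposition~\ref{prop4.4}), and is therefore noetherian.

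In particular, for open \(H\) every generalization has an open parameter. For \(H=p^rA\) or \(H\) transverse, \(\operatorname{gen}(x)\) is a finite set. None of the separation or quasi-compactness arguments you propose is needed. Your last step, from generically noetherian to weakly noetherian, matches the paper.
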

\begin{proof}
Let \(x=\mathcal P_P(H,\mathfrak q)\) and \(z=\mathcal P_P(L,\mathfrak r)\) a generalization of \(x\). We
first show that, after conjugating \(L\) we would have \(L\leq H\) and \( [H:L]\leq p^2.\) Choose a descending cofinal sequence
\((N_n)\) of open normal subgroups of \(P\), and put
\[
 Q_n=P/N_n,\qquad H_n=HN_n/N_n,\qquad L_n=LN_n/N_n.
\]
Since \(Q_n\) is metacyclic (i.e., cyclic-by-cyclic),  any subquotient of \(Q_n\)
isomorphic to \((C_p)^r\) has \(r\leq2\).

Since the images \(z_n,x_n\in X_{Q_n}\)
satisfy \(x_n\in\overline{\{z_n\}}\), by \cite[Theorem~11.10, Remark~11.11]{BalmerGallauerGeometry}, there is an elementary abelian
section \(E_n\) of \(Q_n\) and points
\(\widetilde z_n=\mathcal P_{E_n}(J_z,\mathfrak a_z),\) \(\widetilde x_n=\mathcal P_{E_n}(J_x,\mathfrak a_x)\)
mapping to \(z_n\) and \(x_n\), respectively, with
\(\widetilde x_n\in\overline{\{\widetilde z_n\}}\).  Hence
\cite[Corollary~7.13]{BalmerGallauerGeometry} gives
\[
        J_z\leq J_x,\qquad [J_x:J_z]\leq p^2.
\]
Write \(E_n=S_n/T_n\), and let
\(\rho_n:S_n\twoheadrightarrow E_n\) be the quotient map.
By \cite[Remark~7.6(b) and Proposition~7.11]
{BalmerGallauerGeometry}, the subgroup parameters of the images of
\(\widetilde z_n\) and \(\widetilde x_n\) in
\(\operatorname{Spc}(\mathscr K(Q_n))\) are
\([\rho_n^{-1}(J_z)]_{Q_n}\) and \([\rho_n^{-1}(J_x)]_{Q_n}\), respectively.
Since these images are \(z_n\) and \(x_n\), there exist
\(a_n,b_n\in Q_n\) such that
\(\rho_n^{-1}(J_z)=L_n^{a_n},\) \(\rho_n^{-1}(J_x)=H_n^{b_n}.\) Since \(J_z\leq J_x\),  there exists \(g_n\in P\) such that
\(
 L^{g_n}N_n\leq HN_n
\)
and \([HN_n:L^{g_n}N_n]\leq p^2.\)

Let \(C_n\subseteq P\) be the set of all \(g\) satisfying these two
conditions. The sets \(C_n\) are nonempty and clopen, and
\(C_{n+1}\subseteq C_n\). This gives
\(g\in\bigcap_nC_n\) by compactness. Then by cofinality, we have \(L^g\leq H\).
If \([H:L^g]>p^2\), then \(p^2+1\) distinct cosets remain distinct in
a sufficiently large finite-quotient, contradicting the property of \(C_n\). This proves the claim.

Since a  topologically finitely generated profinite group has only finitely
many open subgroups of bounded index, there exist finitely many
\(P\)-conjugacy classes
\([K_1]_P,\ldots,[K_m]_P\) such that
\(
 \operatorname{gen}_{X_P}(x)
 \subseteq
 \bigcup_{i=1}^m X_{[K_i]},
\)
where \(X_{[K]}=\{\mathcal P_P(K',\mathfrak s)\mid K'\sim_P K\}\) is the continuous image of
\(\mathcal V_{N_P(K)/K}\). By Lemma~\ref{lem4.3} and Proposition~\ref{prop4.4}, each \(X_{[K_i]}\) is
noetherian. Thus \(\operatorname{gen}_{X_P}(x)\) is also noetherian, which proves that \(X_P\) is generically noetherian. The last assertion follows from
\cite[Lemma~9.9]{BarthelHeardSanders}.
\end{proof}

Recall from \cite[Definitions~2.14 and~2.15]{Zou} that for a topological space \(X\) and a subset $S\subseteq X$, a point $x$ is called a weakly isolated point of $S$ if there exists an open subset $U$ of $X$ such that $\{x\}\subseteq U\cap S\subseteq \overline{\{x\}}$. A point $x$ is said to be an isolated point if there exists an open subset $U$ of $X$ such that $\{x\}= U\cap S$.      The space $X$ is said to be (weakly) scattered if every nonempty closed subset of $X$ has a (weakly) isolated point. A spectral space is said to be Hochster (weakly) scattered if its Hochster dual is (weakly) scattered.

\begin{lemma}\label{lem4.6}
Every countable spectral space is weakly scattered.
\end{lemma}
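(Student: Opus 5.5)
Let \(X\) be a countable spectral space and \(Z\subseteq X\) a nonempty closed subset. We must find \(x\in Z\) and an open \(U\subseteq X\) with \(\{x\}\subseteq U\cap Z\subseteq\overline{\{x\}}\). The plan has three steps: reduce to a countable spectral space, produce a point that is open in some nonempty open subset of it via Baire category, and then enlarge that point's open neighbourhood back to \(X\).

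\textbf{Reduction.} A closed subspace of a spectral space is spectral, and \(Z\) is countable. An open subset of \(Z\) has the form \(U\cap Z\) with \(U\) open in \(X\). So it suffices to prove that every nonempty countable spectral space \(Y\) has a point \(y\) and an open \(V\subseteq Y\) with \(y\in V\subseteq\overline{\{y\}}\).

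\textbf{Baire step.} Write \(Y=\bigcup_{y\in Y}\overline{\{y\}}\), a countable union of closed sets. I want one of these closures to have nonempty interior. A spectral space is not Hausdorff, so the classical Baire theorem does not apply directly. I would use the constructible (patch) topology \(Y_{\mathrm{con}}\) instead. It is compact Hausdorff, and it is countable, hence a Baire space. Each \(\overline{\{y\}}\) is closed in \(Y\), so it is patch-closed. By Baire, some \(\overline{\{y_0\}}\) contains a nonempty patch-open set. Basic patch-open sets are of the form \(U\cap W^c\) with \(U,W\) quasi-compact open, so we get \(\emptyset\neq U\cap W^c\subseteq\overline{\{y_0\}}\).

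\textbf{Obstacle: recovering an open set in \(Y\).} This is the delicate step, and I would handle it by repeating the argument inside a smaller space. Let \(S:=U\cap W^c\). It is pro-constructible, so it is spectral in the subspace topology, and it is countable. Applying the Baire argument to \(S\) still only yields patch-open pieces. The fix is to choose \(y\) to be a generic point of \(S\), that is, a maximal point for specialization, whose closure in \(Y\) should swallow an open neighbourhood.

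A cleaner route is available, because \(Y\) itself is a Baire space in its spectral topology. Indeed, in a spectral space, if closed sets \(F_n\) have empty interior, one shrinks nonempty quasi-compact opens \(U_{n+1}\subseteq U_n\setminus F_n\). For this, note that \(U_n\setminus F_n\) is a nonempty open set, so it contains a nonempty quasi-compact open subset. The intersection \(\bigcap U_n\) is an intersection of a decreasing chain of nonempty patch-closed sets in a compact space, hence nonempty. So Baire holds in the spectral topology as well.

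Applying this to the covering \(Y=\bigcup_y\overline{\{y\}}\), we get some \(\overline{\{y\}}\supseteq V\neq\emptyset\) with \(V\) open. Pick any \(v\in V\). Then \(v\in\overline{\{y\}}\), and \(V\) is an open set containing \(v\), so \(V\) contains every generalization of \(v\), in particular \(y\). Hence \(y\in V\subseteq\overline{\{y\}}\).

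Putting the steps together, \(y\) is a weakly isolated point of \(Z\). The main point needing care is the Baire property of spectral spaces in the spectral topology, established in the previous paragraph via quasi-compact opens and patch compactness.
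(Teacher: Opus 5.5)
Your final argument (the ``cleaner route'') is correct and is essentially the paper's proof. The paper likewise covers the countable closed subspace by the closures \(\overline{\{x_n\}}\), builds nested nonempty quasi-compact opens and uses compactness of the constructible topology, and uses the same observation that an open set inside \(\overline{\{x_n\}}\) must contain \(x_n\); it merely phrases this as a contradiction rather than as a Baire property. The earlier patch-topology Baire step and the ``generic point of \(S\)'' idea are unnecessary detours and can be deleted.
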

\begin{proof}
Let \(C=\{x_1,x_2,\ldots\}\) be a nonempty closed subspace. A closed subspace of a spectral space is spectral. Suppose \(C\) has no weakly
isolated point. Then every \(\overline{\{x_n\}}\) has empty interior in \(C\): if a nonempty open \(U\) were contained in this closure, every
open neighborhood of a specialization of \(x_n\) would contain its generalization \(x_n\), so \(x_n\in U\) and \(x_n\) would be weakly isolated.

Set \(Q_1=C\) and choose recursively nonempty quasi-compact opens \(Q_{n+1}\subseteq Q_n\setminus\overline{\{x_n\}}.\) Quasi-compact opens are clopen in the compact Hausdorff constructible topology. Nested compactness produces \(y\in\bigcap_nQ_n\). Write \(y=x_m\). Then \(y\in Q_{m+1}\), contrary to the choice
\(Q_{m+1}\cap\overline{\{x_m\}}=\varnothing\). Thus every nonempty closed subspace has a weakly isolated point.
\end{proof}

\begin{theorem}\label{thm4.7}
For a \(p\)-decomposition group \(P\), the spectrum \(X_P\) is Hochster weakly scattered. In particular, \(\mathscr T(P)\) satisfies the local-to-global principle.
\end{theorem}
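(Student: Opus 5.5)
The plan is to reduce Hochster weak scatteredness of \(X_P\) to two simpler kinds of pieces. The first kind are the strata with open subgroup parameter, each of which is noetherian. The second is the locus with nonopen parameter, which is countable by Lemma~\ref{lem4.3} and Proposition~\ref{prop4.4}. The local-to-global principle will then follow from Zou's theorem that Hochster weakly scattered spectra give the local-to-global principle \cite{Zou}.

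\emph{Step 1: the strata and their constructibility.} For a conjugacy class \([K]_P\) write \(X_{[K]}=\sigma_P^{-1}([K]_P)\). I would first show that for \(K\leq_oP\) the stratum \(X_{[K]}\) is constructible in \(X_P\), i.e.\ locally closed and cut out by finitely many Thomason conditions. Recollection~\ref{rec2.6} gives
\[
 \mathcal P_P(H,\mathfrak q)\notin\operatorname{supp}(\operatorname{kos}_P(K'))\iff H\leq_PK' .
\]
So \(\{H\leq_PK\}\) is the complement of \(\operatorname{supp}\operatorname{kos}_P(K)\), hence open. Next, the locus \(\{H\geq_P K\}\) should be expressible using the finitely many maximal open subgroups \(K'\in\mathrm{Max}_o(K)\) together with conjugation. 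Here one uses that \(P\) is topologically finitely generated, so \(K\) has finitely many maximal open subgroups. Intersecting these conditions cuts out \(X_{[K]}\) by finitely many Thomason conditions. Moreover \(X_{[K]}\) is homeomorphic to a quotient of \(\mathcal V_{N_P(K)/K}\) for a finite group, hence noetherian. There are only countably many open subgroups.

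The complement \(Y:=X_P\setminus\bigcup_{K\leq_oP}X_{[K]}\) is the nonopen-parameter locus. It is an intersection of complements of constructible sets, hence pro-constructible, hence spectral in the subspace topology. By Lemma~\ref{lem4.3} and Proposition~\ref{prop4.4} it is countable.

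\emph{Step 2: finding weakly isolated points.} Let \(Z\subseteq X_P^*\) be nonempty and closed. Then \(Z\) is pro-constructible and generalization-closed in \(X_P\), and the closure of a point \(x\) in \(X_P^*\) is \(\operatorname{gen}_{X_P}(x)\). We need \(x\in Z\) and a Thomason subset \(V\subseteq X_P\) with
\[
 x\in V,\qquad V\cap Z\subseteq\operatorname{gen}_{X_P}(x).
\]
There are two cases.

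If \(Z\) meets some open stratum, choose \(K\) so that \([K]_P\) is maximal among open parameters met by \(Z\). This is possible since generalizations only decrease parameters (Recollection~\ref{rec2.6}), and open subgroups containing a given one are finite in number. In the noetherian space \(Z\cap X_{[K]}\), take a point \(x\) that is closed in \(Z\cap X_{[K]}\) and whose closure is an irreducible component. Take \(V\) to be \(\overline{\{x\}}\cap\operatorname{supp}(\text{finitely many kos})\). This is Thomason, since closures of points in \(\operatorname{Spc}\) are supports of compact objects in the noetherian situation after restricting. Maximality of \(K\) should force \(V\cap Z\subseteq\operatorname{gen}(x)\).

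If \(Z\subseteq Y\), then \(Z\) is a closed subset of \(Y^*\). The dual of a countable spectral space is countable and spectral, so Lemma~\ref{lem4.6} provides a weakly isolated point of \(Z\) inside \(Y^*\). Since \(Y\) is pro-constructible, dual-open subsets of \(Y^*\) are restrictions of dual-open subsets of \(X_P^*\), so this point is weakly isolated in \(X_P^*\).

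\emph{Main obstacle.} The hardest step is the first case: producing a genuinely Thomason \(V\) in \(X_P\), rather than merely in the stratum, that isolates \(x\) inside \(Z\). This needs two inputs.
\begin{enumerate}
\item The generic noetherianity of Proposition~\ref{prop4.5}. It says \(\operatorname{gen}(x)\) lies in finitely many strata, of index at most \(p^2\) below \(K\).
\item Control of points of \(Z\) with nonopen parameter that specialize into \(\overline{\{x\}}\).
\end{enumerate}
I would first try to handle the second input by intersecting with \(\operatorname{supp}\operatorname{kos}_P(K'')\) for suitable open \(K''\leq K\) of index at most \(p^2\). Compactness of \(Z\) in the constructible topology should then let finitely many such conditions suffice.
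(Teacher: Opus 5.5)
\textbf{Gap in Case 1: the isolating set is never shown to be Thomason.}

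Your Case~2 and the final appeal to Zou agree with the paper. There, too, the closed nonopen locus is countable, Lemma~\ref{lem4.6} applies to it, and pro-constructibility transports weakly isolated points to \(X_P^*\).

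Case~1 is the real content of the theorem, and there you never prove that \(V\) is Thomason in \(X_P\). The justification ``closures of points are supports of compact objects in the noetherian situation after restricting'' does not apply, because \(X_P\) is not noetherian. Restricting to a noetherian piece helps only if that piece sits in \(X_P\) as a closed subset with quasi-compact open complement, and you do not show this.

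The remedy you propose (Proposition~\ref{prop4.5}, subgroups of index \(\le p^2\), constructible-topology compactness, control of nonopen points) addresses a non-issue. Every point of \(\overline{\{x\}}\) has a parameter containing a conjugate of the open subgroup \(K\), so that parameter is open. Hence no nonopen point ever lies in \(\overline{\{x\}}\cap Z\). Your maximal choice of \([K]_P\), together with \(x\) being closed in \(Z\cap X_{[K]}\), already gives \(\overline{\{x\}}\cap Z=\{x\}\). The only missing fact is that \(X_P\setminus\overline{\{x\}}\) is quasi-compact.

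\textbf{The missing idea and how to supply it.}

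The paper's key step is that, for \(N\trianglelefteq_oP\), the locus
\[
Y_N=\{\mathcal P_P(H,\mathfrak q)\mid N\le H\}\cong X_{P/N}
\]
is closed in \(X_P\) with quasi-compact open complement.
\begin{itemize}
\item The paper proves this by writing \(Y_N=\pi_N^{-1}(Z_N)\) for inflation from \(P/\Phi(N)\). This uses the Frattini argument \(N\le H\Phi(N)\Rightarrow N\le H\) and noetherianity of the finite-group spectrum.
\item It follows that \(Y_N\) is open in \(X_P^*\), and its induced topology is the dual of a noetherian spectral space, hence scattered.
\item Consequently \(O=\bigcup_NY_N\) is an open scattered subspace of \(X_P^*\), which settles your Case~1 at once.
\end{itemize}

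You could also extract this from your own Step~1, as follows.
\begin{itemize}
\item The Koszul conditions for \(K'\in\mathrm{Max}_o(K)\) do not cut out \(\{H\ge_PK\}\) by themselves. However, intersected with \(\{H\le_PK\}\) they do give \(X_{[K]}\) as a constructible set, which is what you need.
\item Then \(Y_N\) is the finite union of the strata \(X_{[K]}\) with \(N\le K\), so it is constructible.
\item \(Y_N\) is specialization-closed, since parameters only grow under specialization and \(N\) is normal. A constructible, specialization-closed subset is closed, so \(Y_N\) is closed with quasi-compact open complement.
\item \(Y_N\) is noetherian, being a finite union of noetherian strata.
\end{itemize}
Now choose \(N\le K\) normal open. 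Then
\[
X_P\setminus\overline{\{x\}}=(X_P\setminus Y_N)\cup(Y_N\setminus\overline{\{x\}})
\]
is quasi-compact. So \(V=\overline{\{x\}}\) is Thomason, and \(x\) is in fact isolated in \(Z\).

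Without this step, Case~1 as written is not established.
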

\begin{proof}
For every open normal \(N\trianglelefteq_oP\), its Frattini subgroup \(\Phi(N)\) is an open characteristic subgroup of \(N\), and thus normal in \(P\). Let
\[
 \pi_N:=
 \operatorname{Spc}\!\bigl(
   \operatorname{Infl}_{P/\Phi(N)}^P
 \bigr):
 X_P\longrightarrow
 \operatorname{Spc}\bigl(\mathscr K(P/\Phi(N))\bigr).
\]
Let \(Y_N\) be the image of \(\psi_N\) for the ambient group \(P\), and let \(Z_N\) be the image of \(\psi_{N/\Phi(N)}\) for the ambient
group \(P/\Phi(N)\). By \cite[Lemma~3.18]{BalmerGallauerArtin}, both are closed subsets of their respective spectra. We claim that
\(Y_N=\pi_N^{-1}(Z_N).\) Indeed, let \(z=\mathcal P_P(H,\mathfrak q)\in X_P\). The subgroup parameter of \(\pi_N(z)\) is
\(\left[H\Phi(N)/\Phi(N)\right]_{P/\Phi(N)}.\) Since \(N\trianglelefteq P\), the description of the images of the modular fixed-point maps gives
\[z\in Y_N\quad\Longleftrightarrow\quad N\leq H,\]
whereas
\[
 \begin{aligned}
 \pi_N(z)\in Z_N
 &\quad\Longleftrightarrow\quad
 N/\Phi(N)\leq H\Phi(N)/\Phi(N)\\
 &\quad\Longleftrightarrow\quad
 N\leq H\Phi(N).
 \end{aligned}
\]
It remains to observe that \(N\leq H\Phi(N)\) is equivalent to \(N\leq H\). One implication is immediate. Conversely, suppose that \(N\leq H\Phi(N)\). Since \(\Phi(N)\leq N\), we have \(N=N\cap H\Phi(N)=(N\cap H)\Phi(N).\) If \(N\cap H<N\), choose a maximal proper open subgroup \(M<N\) containing \(N\cap H\). Since \(\Phi(N)\leq M\), we would have
\(N=(N\cap H)\Phi(N)\leq M<N,\) a contradiction. Hence \(N\cap H=N\), and therefore \(N\leq H\). This proves \(Y_N=\pi_N^{-1}(Z_N).\)

The spectrum \(\operatorname{Spc}(\mathscr K(P/\Phi(N)))\) is noetherian by \cite[Proposition~9.1]{BalmerGallauerGeometry}, so the complement of
\(Z_N\) is quasi-compact open. Since \(\pi_N\) is spectral and \(Y_N=\pi_N^{-1}(Z_N),\) the subset \(Y_N\) is closed in \(X_P\) with quasi-compact open
complement. Hence \(Y_N\) is open in \(X_P^*\), and, since it is closed in \(X_P\), its induced topology is \((Y_N)^*\). Moreover, \cite[Lemma~3.18]{BalmerGallauerArtin} identifies \(Y_N\) with \(X_{P/N}.\) Thus \cite[Lemma~7.17(1)]{Sanders} implies that \(Y_N\) is scattered as a subspace of \(X_P^*\).
Set \(O:=\bigcup_{N\trianglelefteq_oP}Y_N.\)
Since \(\mathcal P_P(H,\mathfrak q)\in Y_N\quad\Longleftrightarrow\quad N\leq H,\) and every open subgroup of \(P\) contains an open normal subgroup,
\(O\) is exactly the locus of points with open subgroup parameter. Moreover, the space \(O\) is scattered. Indeed, let \(C\subseteq O\) be
nonempty and closed. Choose \(N\trianglelefteq_oP\) such that \(C\cap Y_N\neq\varnothing\). Then \(C\cap Y_N\) is closed in the
scattered space \(Y_N\), so there exist \(x\in C\cap Y_N\) and an open subset \(V\subseteq Y_N\) such that
\(V\cap C=\{x\}.\) Since \(Y_N\) is open in \(O\), the subset \(V\) is also open in
\(O\). Thus \(x\) is isolated in \(C\), proving that \(O\) is scattered.

The complement \(D=X_P^*\setminus O\) is a closed spectral subspace.
Since we have
\[
 D=
 \coprod_{\substack{[H]_P\\ H\ \mathrm{nonopen}}}
 \mathcal V_{W_P(H)},
\]
it follows from Lemma~\ref{lem4.3} and Proposition~\ref{prop4.4} that \(D\) is countable. Hence
Lemma~\ref{lem4.6} shows that \(D\) is weakly scattered. If a nonempty closed subset of \(X_P^*\) meets
\(O\), use an isolated point of its intersection with \(O\); otherwise it is contained in \(D\), where it has a weakly isolated point.
Therefore \(X_P^*\) is weakly scattered, and \cite[Theorem~7.6]{Zou} implies that \(\mathscr T(P)\) satisfies the local-to-global principle.
\end{proof}

\section{Analytic minimality}
\label{sec:cohomological}

In this section we recall the minimality theorem of Heyer--Schneider and develop the homological input for pointwise minimality. For later use, if \(V\) is a \(k\)-vector space, set
\[
 C(G,V):=\{f:G\to V\mid f\text{ is locally constant}\},
 \qquad (g\cdot f)(x)=f(g^{-1}x).
\]
This is the smooth coinduction of \(V\) from the trivial subgroup.  If
\(M\in\mathcal A_G\), then \(M_{\mathrm{triv}}\) denotes its underlying
vector space equipped with trivial \(G\)-action.

The following result is due to Heyer and Schneider, see \cite[Theorem~1]{HeyerSchneider}.

\begin{theorem}\label{thm5.1}
Let \(G\) be a compact \(p\)-adic Lie pro-\(p\) group.  Then
\(D(\mathcal A_G)\) has no nonzero proper localizing subcategory.
\end{theorem}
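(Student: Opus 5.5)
This is Heyer--Schneider's theorem, which the paper cites rather than reproves. If one wanted to reconstruct the argument, the plan would run as follows.

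The first move is to identify the compact generators. Since \(G\) is a compact \(p\)-adic Lie pro-\(p\) group, it is topologically finitely generated and contains a uniform open normal subgroup. It therefore has finite virtual cohomological dimension, so the trivial module \(k\) has a finite-length injective resolution in \(\mathcal A_G\). Because \(G\) is pro-\(p\), the trivial module \(k\) is the unique simple object of \(\mathcal A_G\). Every smooth module is the union of its finite-dimensional submodules, and each of these is an iterated extension of copies of \(k\). Using this together with the finiteness of cohomological dimension, one shows that \(k\) is a compact generator of \(D(\mathcal A_G)\). This gives
\[
D(\mathcal A_G)\simeq D(\mathcal H_G),\qquad \mathcal H_G:=\operatorname{REnd}(k),
\]
a derived Hecke dg algebra whose cohomology is \(H^\bullet_{\mathrm{cts}}(G;k)\). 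That cohomology is finite-dimensional, by Lazard's theorem for uniform groups combined with a finite-index argument.

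The minimality claim is then equivalent to the following: for every nonzero \(M\in D(\mathcal A_G)\), the localizing subcategory \(\operatorname{Loc}(M)\) contains \(k\). The strategy is to show that \(\operatorname{RHom}(k,M)\neq0\) for \(M\neq0\), and then to build \(k\) from \(M\).

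\emph{Nonvanishing.} Since \(k\) generates, \(M\neq0\) already forces \(\operatorname{RHom}(k,M)\neq0\).

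\emph{Building \(k\).} Because \(H^\bullet(G;k)\) is finite-dimensional, the ring of operators \(H^\bullet(G;k)\) acting on the category is nilpotent in positive degrees. Hence the cohomological support of any nonzero object is the single point \(\mathfrak m_G\), so there is no room for a nontrivial support-theoretic splitting. To turn this into a genuine statement about localizing subcategories, one uses the following construction. A nonzero map \(\Sigma^n k\to M\) gives rise to an object \(k\otimes^{\mathbb L}_{\mathcal H_G}\operatorname{RHom}(k,M)\), and this object lies in \(\operatorname{Loc}(M)\) by the tensor-ideal property; recall that the derived category is tensored over \(D(k)\) via trivial action. The Koszul/Eilenberg--Moore type argument for a dg algebra with finite-dimensional total cohomology that is local with residue field \(k\) then shows that \(k\) is a retract of a finite colimit built from \(M\otimes C(G,k)\) and \(M\). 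The key point here is that \(M\otimes C(G,k)\simeq C(G,M_{\mathrm{triv}})\), which is injective-like and equals a sum of copies of \(C(G,k)\). Moreover, \(C(G,k)\) builds \(k\), because its finite-level pieces \(k[G/U]\) are iterated extensions of \(k\) in which \(k\) appears as the socle, and the relevant inverse limit is controlled by finite cohomological dimension.

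I expect the main obstacle to be this last step: showing that \(C(G,k)\) (equivalently, the regular object) generates \(k\) as a \emph{localizing} subcategory rather than merely via limits. This is exactly where finite cohomological dimension (Poincaré duality of \(G\)) is essential. In the proof I would use the duality isomorphism \(\operatorname{RHom}(k,C(G,k))\simeq \Sigma^{-d}k\), where \(d=\dim G\), to exhibit \(k\) as a shifted dualizing object built from finitely many copies of \(C(G,k)\) via the finite injective resolution.
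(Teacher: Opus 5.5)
There is a genuine gap at the decisive step, building \(k\) from \(M\). The paper itself gives no proof of this statement; it quotes Heyer--Schneider. Your object \(k\otimes^{\mathbb L}_{\mathcal H_G}\operatorname{RHom}(k,M)\) admits two readings. If \(k\) carries its \(G\)-action, as the \(\mathcal H_G\)--\(G\)-bimodule realizing the equivalence, then the object is \(M\) again and nothing is gained. If \(k\) is the augmentation module, the object is a complex of \(k\)-vector spaces. Regarded in \(D(\mathcal A_G)\) with trivial action, it is then a sum of shifts of \(k\), so the assertion that it lies in \(\operatorname{Loc}(M)\) is the theorem itself. The \(D(k)\)-tensoring you invoke only produces objects \(V\otimes M\), and the unspecified Koszul/Eilenberg--Moore argument does not supply the missing step. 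Your treatment of what you call the main obstacle is also incorrect. \(C(G,k)\) is injective with \(C(G,k)^G=k\), so \(\operatorname{RHom}(k,C(G,k))\simeq k\) in degree \(0\), not \(\Sigma^{-d}k\). And writing \(C(G,k)\) as the union of the \(C(G/U,k)\simeq k(G/U)\) only shows that \(k\) builds \(C(G,k)\), which is the wrong direction.

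The facts you mention in passing already give a complete proof, with no remaining obstacle.

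First, since \(k\) is a compact generator, every localizing subcategory \(\mathcal L\) is a \(\otimes\)-ideal: the \(X\) with \(X\otimes\mathcal L\subseteq\mathcal L\) form a localizing subcategory containing \(k\). You never state this, but it is what puts \(M\otimes C(G,k)\) into \(\operatorname{Loc}(M)\).

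Second, \(M\otimes C(G,k)\simeq C(G,M_{\mathrm{triv}})\). Split the complex of vector spaces \(M_{\mathrm{triv}}\) into its cohomology; \(C(G,-)\) preserves homotopy equivalences and, \(G\) being compact, direct sums. This gives \(M\otimes C(G,k)\simeq\bigoplus_n\Sigma^{-n}H^n(M)\otimes_k C(G,k)\), a nonzero sum of shifted copies of \(C(G,k)\) when \(M\neq0\). Hence \(C(G,k)\in\operatorname{Loc}(M)\).

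Third, every injective of \(\mathcal A_G\) is a sum of copies of \(C(G,k)\), the injective hull of the unique simple object \(k\). A finite injective resolution of \(k\) (as in Heyer--Schneider's Proposition~2, used in Proposition~\ref{prop5.6}) therefore gives \(k\in\operatorname{thick}(C(G,k))\subseteq\operatorname{Loc}(M)\).

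Finally, your opening step is wrong as stated. Finite virtual cohomological dimension does not yield a finite injective resolution of \(k\); that requires \(\operatorname{cd}_p(G)<\infty\), i.e.\ \(G\) torsion-free. A nontrivial finite \(p\)-group is a \(0\)-dimensional counterexample, with \(H^\bullet(G;k)\) infinite-dimensional. So even after the repair your argument covers only torsion-free \(G\). That is all the paper uses, in Propositions~\ref{prop7.1} and~\ref{prop7.6}. The statement as written allows torsion, however, and would need a further reduction, e.g.\ to a torsion-free open normal subgroup.
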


\begin{proposition}
\label{prop5.2}
Let \(G\) be a profinite group. Then \(q_G\) induces a tensor-triangulated equivalence
\[
   \frac{\mathscr K(G)}
        {\mathscr K_{\mathrm{ac}}(G)}
 \xrightarrow{\ \sim\ }D^b(kG).
\]
\end{proposition}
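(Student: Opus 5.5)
We reduce to the finite-group case and pass to the colimit over finite quotients. Recall that for a finite group \(G\), Balmer--Gallauer show that \(q_G\colon K^b(\operatorname{perm}(G;k))^\natural\to D^b(kG)\) is a Verdier quotient with kernel \(\mathscr K_{\mathrm{ac}}(G)\). The reason is that every finite-dimensional \(kG\)-module has a resolution by finitely generated permutation modules, and the bounded derived category is then the localization of the homotopy category at quasi-isomorphisms. We will use this finite statement as the input at each finite stage.

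First, for profinite \(G\), the inflation functors identify \(\mathscr K(G)\) with the 2-colimit \(\operatorname{colim}_{N}\mathscr K(G/N)\) over open normal subgroups \(N\). This holds because finite permutation modules and the maps between them are inflated from some finite quotient, since the \(G\)-sets are finite and the stabilizers are open. Likewise, every finite-dimensional smooth \(kG\)-module is inflated from some \(G/N\), and Hom sets satisfy
\[\operatorname{Hom}_{D^b(kG)}=\operatorname{colim}_N\operatorname{Hom}_{D^b(k(G/N))}.\]
On Homs, this identification comes from \(H^\bullet_{\mathrm{cts}}(G;M)=\operatorname{colim}_N H^\bullet(G/N;M^{N})\) together with \(\operatorname{Ext}\)-computations in the locally noetherian category \(\mathcal A_G\). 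So \(D^b(kG)\simeq\operatorname{colim}_N D^b(k(G/N))\) as well. The functors \(q_{G/N}\) are compatible with inflation, so \(q_G\) is the colimit of the \(q_{G/N}\).

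Second, a filtered colimit of Verdier quotient functors is a Verdier quotient. Concretely, the induced functor \(\mathscr K(G)/\ker(q_G)\to D^b(kG)\) is essentially surjective up to direct summands, and the finite-stage surjectivity gives honest essential surjectivity. To see that it is fully faithful, take a morphism \(f\colon q_G X\to q_G Y\). It is defined at some finite stage \(G/N\), where \(X\) and \(Y\) also live, and there it is represented by a roof \(X\xleftarrow{s}X'\to Y\) whose map \(s\) has cone in \(\mathscr K_{\mathrm{ac}}(G/N)\). After inflation, that cone lies in \(\mathscr K_{\mathrm{ac}}(G)\), so the roof gives a morphism in the quotient lifting \(f\). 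Faithfulness is the same argument with a morphism in \(\mathscr K(G)\) killed by \(q_G\). Such a morphism is killed already at some finite stage, because the relevant Hom is a colimit. So it factors through an acyclic object there, and it therefore vanishes in the quotient. Finally, the tensor compatibility is automatic, since \(q_G\) is symmetric monoidal and its kernel is a tensor ideal.

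The main obstacle is the colimit description of \(D^b(kG)\): the Hom groups in \(D^b(kG)\) for profinite \(G\) must equal the colimit of the finite-stage Hom groups. We will prove this via Ext in \(\mathcal A_G\), using that smooth injectives restrict well and that continuous cohomology commutes with the filtered colimit of inflations. We must also check that \(\ker(q_G)\) is exactly the union of the inflated finite-stage kernels \(\mathscr K_{\mathrm{ac}}(G/N)\). This follows because an object \(X\) defined at stage \(G/N\) is acyclic in \(D^b(kG)\) if and only if its image is acyclic in \(D^b(k(G/N))\), since inflation is exact and faithful on underlying complexes.
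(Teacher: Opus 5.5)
Your proposal is correct and follows essentially the paper's argument: both apply the finite-group equivalence of Balmer--Gallauer at each finite quotient, then check essential surjectivity, fullness and faithfulness by observing that all objects, roofs and null-homotopies involved are inflated from some finite quotient \(G/N\). The one difference is how you get the Hom-colimit description of \(D^b(kG)\). You plan to derive it from continuous cohomology and \(\operatorname{Ext}\) in \(\mathcal A_G\), whereas the paper gets it directly from the calculus of fractions: a roof of bounded complexes of finite-dimensional smooth modules, together with any null-homotopy witnessing its vanishing, is finite data already defined over a finite quotient. (Also, the finite-group input you cite rests on \emph{finite-length} permutation resolutions, not merely on the existence of resolutions.)
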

\begin{proof}
For every finite quotient \(Q=G/N\), \cite[Theorem~5.13]{BalmerGallauerFinite}  gives
\[
 \frac{\mathscr K(Q)}
      {\mathscr K_{\mathrm{ac}}(Q)}
 \xrightarrow{\ \sim\ }D^b(kQ).
\]

Since a bounded complex of finite-dimensional smooth \(G\)-modules factors
through a finite quotient of \(G\), this shows essential surjectivity.

For fullness, represent a morphism in \(D^b(kG)\) by a roof. The
objects and arrows occurring in this roof form a finite diagram and
therefore factor through a  finite quotient.
The finite-group equivalence lifts the resulting morphism to the
corresponding Verdier quotient.

For faithfulness, represent a morphism in
\(
 \mathscr K(G)/\mathscr K_{\mathrm{ac}}(G)
\)
by a roof
\(
 X\xleftarrow{s}Z\xrightarrow{f}Y
\)
at some finite stage. If its image vanishes in \(D^b(kG)\), the
calculus of fractions provides a quasi-isomorphism \(u:Z'\to Z\)
refining the roof such that \(fu\) is null-homotopic. All these data
therefore factor through one deeper finite quotient. The finite-group
equivalence then implies that the original roof is zero. This concludes the proof.
\end{proof}

\begin{corollary}
\label{cor5.3}
For every profinite group \(G\), we have
\[
 \mathcal V_G\simeq
 \operatorname{Spc}(\mathscr K(G))\setminus
 \bigcup_{c\in\mathscr K_{\mathrm{ac}}(G)}
       \operatorname{supp}(c).
\]
\end{corollary}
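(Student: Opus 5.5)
The plan is to combine Proposition~\ref{prop5.2} with the standard description of the spectrum of a Verdier quotient. Recall from Balmer's tt-geometry that if \(\mathscr J\subseteq\mathscr K\) is a thick tensor ideal, then the quotient functor \(\mathscr K\to\mathscr K/\mathscr J\) induces a homeomorphism of \(\operatorname{Spc}(\mathscr K/\mathscr J)\) onto the subspace
\[
 \{\mathcal P\in\operatorname{Spc}(\mathscr K)\mid \mathscr J\subseteq\mathcal P\}
 =\operatorname{Spc}(\mathscr K)\setminus\bigcup_{c\in\mathscr J}\operatorname{supp}(c),
\]
since \(\mathcal P\in\operatorname{supp}(c)\) exactly when \(c\notin\mathcal P\).

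First, I will check that \(\mathscr K_{\mathrm{ac}}(G)=\ker(q_G)\) is a thick tensor ideal of \(\mathscr K(G)\). This holds because \(q_G\) is an exact symmetric monoidal functor, and \(D^b(kG)\) is idempotent complete, so the kernel is closed under direct summands. I will then apply the quotient-spectrum result to \(\mathscr J=\mathscr K_{\mathrm{ac}}(G)\).

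Next, Proposition~\ref{prop5.2} identifies \(\mathscr K(G)/\mathscr K_{\mathrm{ac}}(G)\) with \(D^b(kG)\) as tt-categories, so their spectra are homeomorphic. Combined with \(\mathcal V_G=\operatorname{Spc}(D^b(kG))\), this gives the displayed homeomorphism, realized by \(\operatorname{Spc}(q_G)\).

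The only subtle point is that the Verdier quotient \(\mathscr K(G)/\mathscr K_{\mathrm{ac}}(G)\) might not be idempotent complete. This is harmless for two reasons: \(\operatorname{Spc}\) is invariant under idempotent completion, and in any case Proposition~\ref{prop5.2} already asserts an equivalence with the idempotent-complete category \(D^b(kG)\). So no step should present a real obstacle. The corollary is a formal consequence of Proposition~\ref{prop5.2}, and the topology on the right-hand side is the subspace topology.
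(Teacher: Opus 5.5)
Your proposal is correct and follows the paper's route: Proposition~\ref{prop5.2} combined with Balmer's description of the spectrum of a Verdier quotient by a thick tensor ideal \cite[Proposition~3.11]{Balmer}, where \(\mathcal P\in\operatorname{supp}(c)\Leftrightarrow c\notin\mathcal P\) turns the condition \(\mathscr K_{\mathrm{ac}}(G)\subseteq\mathcal P\) into the displayed complement. One minor point: \(\ker(q_G)\) is closed under direct summands without any appeal to idempotent completeness of \(D^b(kG)\), because the kernel of any additive functor has this property.
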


\begin{proof}
This follows from  Proposition~\ref{prop5.2} and \cite[Proposition~3.11]{Balmer} immediately.
\end{proof}

\begin{proposition}
\label{prop5.4}

Let \(G\) be a profinite group and suppose that
\(
 \operatorname{gldim}(\mathcal A_G)<\infty.
\)
Then the realization functor \(\Upsilon_G:\mathscr T(G)\longrightarrow D(\mathcal A_G)\)
is a finite localization, and there are equivalences
\[
 \frac{\mathscr T(G)}
      {\operatorname{Loc}^{\otimes}
       \bigl(\mathscr K_{\mathrm{ac}}(G)\bigr)}
 \xrightarrow{\ \sim\ }
 D(\mathcal A_G)
 \xleftarrow{\ \sim\ }
 K(\operatorname{Inj}\mathcal A_G).
\]
\end{proposition}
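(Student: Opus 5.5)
The plan is to realize \(\Upsilon_G\) as a finite localization away from the compact objects \(\mathscr K_{\mathrm{ac}}(G)\), and then to identify the quotient using compact generation together with Proposition~\ref{prop5.2}.

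\emph{Step 1: \(\Upsilon_G\) is a Bousfield localization.} Since \(\Upsilon_G\) is exact and coproduct-preserving, and \(\mathscr T(G)\) is compactly generated, Brown representability gives a right adjoint \(\rho\). I would show that \(\rho\) is fully faithful, i.e. that the counit \(\Upsilon_G\rho\to\mathrm{id}\) is invertible. The finiteness of \(\operatorname{gldim}(\mathcal A_G)\) is used here. Every object of \(D(\mathcal A_G)\) is then quasi-isomorphic to a complex of injectives, and \(K(\operatorname{Inj}\mathcal A_G)\to D(\mathcal A_G)\) is an equivalence, since acyclic complexes of injectives are contractible when injective dimension is bounded. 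Moreover, \(D(\mathcal A_G)\) is compactly generated by the finite-dimensional modules, and these are compact because finite global dimension bounds \(\operatorname{Ext}\) and makes it commute with coproducts in the locally noetherian setting. These generators lie in the image of \(q_G\) by Proposition~\ref{prop5.2}.

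\emph{Step 2: identify the kernel.} Proposition~\ref{prop5.2} says that on compacts \(\Upsilon_G\) restricts to the Verdier quotient \(\mathscr K(G)\to\mathscr K(G)/\mathscr K_{\mathrm{ac}}(G)\simeq D^b(kG)\), followed by \(D^b(kG)\to D(\mathcal A_G)\). I then need the latter to be fully faithful onto compact objects. This is standard: for a locally noetherian Grothendieck category, bounded complexes of noetherian objects embed fully faithfully, and with finite global dimension they are exactly the compacts. Now apply the Neeman--Thomason localization theorem. The functor \(\mathscr T(G)\to \mathscr T(G)/\operatorname{Loc}(\mathscr K_{\mathrm{ac}}(G))\) is a finite localization, and its compact part is the idempotent completion of \(\mathscr K(G)/\mathscr K_{\mathrm{ac}}(G)\simeq D^b(kG)\), which is already idempotent complete. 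Since \(\Upsilon_G\) kills \(\mathscr K_{\mathrm{ac}}(G)\) and preserves coproducts, it factors through this quotient. The induced functor preserves coproducts, is fully faithful on compact generators, and sends a set of compact generators to a set of compact generators. Hence it is an equivalence by the usual dévissage argument. Finally, \(\mathscr K_{\mathrm{ac}}(G)\) is a tensor ideal because \(q_G\) is monoidal, so \(\operatorname{Loc}(\mathscr K_{\mathrm{ac}}(G))=\operatorname{Loc}^{\otimes}(\mathscr K_{\mathrm{ac}}(G))\) by rigidity.

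\emph{Main obstacle.} The delicate step is the compactness in \(D(\mathcal A_G)\). I must check that finite-dimensional smooth modules, and hence \(D^b(kG)\), are compact in \(D(\mathcal A_G)\), and that they generate. Compactness uses \(\operatorname{gldim}<\infty\), which I would apply via \(\operatorname{Hom}_{D}(M,-)\) computed by finite injective resolutions, together with the fact that filtered colimits of injectives are injective in a locally noetherian category. Generation holds because every smooth module is a union of finite-dimensional submodules. Without finite global dimension, compact objects in \(D(\mathcal A_G)\) need not include the finite-dimensional modules, and this is exactly why the hypothesis is imposed.
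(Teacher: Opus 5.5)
Your proposal is correct and follows essentially the same route as the paper. You first establish that $D(\mathcal A_G)\simeq K(\operatorname{Inj}\mathcal A_G)$ is compactly generated with compact part $D^b(kG)$; the paper cites Krause for this where you sketch it by hand. You then combine the Neeman--Thomason theorem with Proposition~\ref{prop5.2} to show that the functor induced on $\mathscr T(G)/\operatorname{Loc}^{\otimes}(\mathscr K_{\mathrm{ac}}(G))$ is an equivalence on compacts, hence an equivalence. Your Step~1 Brown-representability argument is not needed, because the equivalence from Step~2 already exhibits $\Upsilon_G$ as the finite localization.
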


\begin{proof}
Let
\(
 d=\operatorname{gldim}(\mathcal A_G)<\infty.
\)
By \cite[Proposition~2.3]{Krause},
\(K(\operatorname{Inj}\mathcal A_G)\) is compactly generated and \( K(\operatorname{Inj}\mathcal A_G)^c\simeq D^b(kG).\) Since
\(
 \operatorname{gldim}(\mathcal A_G)<\infty,
\)
every object of \(\mathcal A_G\) has finite injective dimension.
Hence \cite[Example~3.10]{Krause} gives a canonical equivalence
\[
 K(\operatorname{Inj}\mathcal A_G)
 \xrightarrow{\ \sim\ }
 D(\mathcal A_G).
\]
Consequently \(D(\mathcal A_G)\) is compactly generated and \(D(\mathcal A_G)^c\simeq D^b(kG).\)

Let
\(
 \mathcal L
 :=
 \operatorname{Loc}
 \bigl(\mathscr K_{\mathrm{ac}}(G)\bigr).
\)
 For
\(c\in\mathscr K_{\mathrm{ac}}(G)\), the objects \(t\) satisfying
\(
 c\otimes t\in\mathcal L
\)
form a localizing subcategory of \(\mathscr T(G)\). It contains every
compact object, since \(\mathscr K_{\mathrm{ac}}(G)\) is a thick ideal.
Since \(\mathscr T(G)\) is compactly generated, this localizing
subcategory is all of \(\mathscr T(G)\). Now fix \(t\in\mathscr T(G)\). The full subcategory
\[
 \{x\in\mathscr T(G)\mid x\otimes t\in\mathcal L\}
\]
is localizing and, by the preceding argument, contains
\(\mathscr K_{\mathrm{ac}}(G)\). It therefore contains
\(\mathcal L=\operatorname{Loc}(\mathscr K_{\mathrm{ac}}(G))\).
Hence \(x\otimes t\in\mathcal L\) for every
\(x\in\mathcal L\) and \(t\in\mathscr T(G)\), so
\(\mathcal L\) is a tensor ideal.

The Neeman--Thomason localization theorem  gives
\[
 \bigl(\mathscr T(G)/\mathcal L\bigr)^c
 \simeq
 \left(
   \frac{\mathscr K(G)}
        {\mathscr K_{\mathrm{ac}}(G)}
 \right)^{\natural}.
\]
By Proposition~\ref{prop5.2}, the
right-hand side is equivalent to \(D^b(kG)\). Realization induces a coproduct-preserving exact functor
\[
 \overline\Upsilon_G:
 \mathscr T(G)/\mathcal L
 \longrightarrow
 D(\mathcal A_G).
\]
Under the preceding identifications, its restriction to compact
objects is precisely the equivalence of
Proposition~\ref{prop5.2}. Hence
\(\overline\Upsilon_G\) is an equivalence on compact objects. Since
both categories are compactly generated, it is an equivalence. This concludes the proof.
\end{proof}

\begin{remark}\label{rem5.5}
Under the hypotheses of
Proposition~\ref{prop5.4}, we denote the fully faithful
coproduct-preserving right adjoint of \(\Upsilon_G\) by
\(\rho_G: K(\operatorname{Inj}\mathcal A_G)\longrightarrow\mathscr T(G).\)
\end{remark}

\begin{proposition}
\label{prop5.6}
Let \(G\) be a torsion-free compact \(p\)-adic Lie pro-\(p\) group of
dimension \(d\). Then
\(
 \operatorname{gldim}(\mathcal A_G)\leq d.
\)
\end{proposition}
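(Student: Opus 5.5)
Any smooth \(kG\)-module \(M\) embeds functorially into the coinduced module \(C(G,M_{\mathrm{triv}})\), and every injective of \(\mathcal A_G\) is a summand of such a module. So the plan is to control \(\operatorname{Ext}\) in \(\mathcal A_G\) through cohomological dimension. The key identity is
\[
 \operatorname{Ext}^i_{\mathcal A_G}(M,N)
\]
for \(M,N\in\mathcal A_G\). Since \(\mathcal A_G\) is locally noetherian, every object is the filtered union of its finite-dimensional subobjects. By Baer-type arguments for locally noetherian Grothendieck categories, it therefore suffices to show \(\operatorname{Ext}^{d+1}_{\mathcal A_G}(M,N)=0\) for all finite-dimensional \(M\) and arbitrary \(N\). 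Concretely, injective dimension of \(N\) is detected by vanishing of \(\operatorname{Ext}^{d+1}(M,N)\) on noetherian \(M\).

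For finite-dimensional \(M\), the internal Hom \(M^\vee\otimes_k N\) is again smooth, and \(\operatorname{Hom}_{\mathcal A_G}(M,-)\cong(M^\vee\otimes -)^G\). The functor \(M^\vee\otimes-\) is exact and preserves injectives, since its left adjoint \(M\otimes-\) is exact. Hence
\[
 \operatorname{Ext}^i_{\mathcal A_G}(M,N)\cong H^i_{\mathrm{cts}}(G;M^\vee\otimes_kN),
\]
where we use that the derived functors of \((-)^G\) on smooth modules are continuous cohomology with discrete coefficients. This reduces the statement to the bound \(\operatorname{cd}_p(G)\le d\): continuous cohomology of \(G\) with values in any discrete \(p\)-torsion module, in particular any smooth \(k\)-module, vanishes above degree \(d\). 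Because \(k\)-modules are \(p\)-torsion, \(\operatorname{cd}_p\) is exactly what is relevant here.

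The remaining input, which I expect to be the main obstacle (though a citation rather than a new argument), is Lazard's theorem. A torsion-free compact \(p\)-adic Lie pro-\(p\) group of dimension \(d\) is a Poincaré duality group of dimension \(d\) at \(p\), and in particular \(\operatorname{cd}_p(G)=d\). One route is Serre's theorem, which says torsion-free profinite groups with an open subgroup of finite \(\operatorname{cd}_p\) have the same \(\operatorname{cd}_p\) as that subgroup. Combined with Lazard's computation for uniform open subgroups, this gives \(\operatorname{cd}_p(G)=d\). I would cite Lazard and Serre here.

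The rest requires some care in two places. First, \(H^i_{\mathrm{cts}}(G;V)\) for an arbitrary (not finite) discrete \(V\) commutes with filtered colimits, so the bound passes from finite to arbitrary coefficients. Second, the Baer-type reduction must be justified: in a locally noetherian Grothendieck category, an object \(N\) with \(\operatorname{Ext}^1(M,N)=0\) for all noetherian \(M\) is injective. Applying this criterion to a \(d\)-th cosyzygy of \(N\) gives \(\operatorname{injdim}N\le d\) for every \(N\), hence \(\operatorname{gldim}(\mathcal A_G)\le d\).
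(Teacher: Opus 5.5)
Your proof is correct, but it takes a different route from the paper's.

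\textbf{The paper's route.} The paper never mentions Ext-vanishing or cohomological dimension. It takes the Heyer--Schneider injective resolution of the trivial module, of length $d$ and with terms finite direct sums of $C(G,k)$, and tensors it over $k$ with an arbitrary $M\in\mathcal A_G$. The untwisting isomorphisms $M\otimes_k C(G,k)\simeq M_{\mathrm{triv}}\otimes_k C(G,k)\simeq C(G,M_{\mathrm{triv}})$, together with the adjunction $\operatorname{Hom}_{\mathcal A_G}(-,C(G,V))\simeq\operatorname{Hom}_k(\operatorname{Res}^G_1(-),V)$, show that the result is an injective resolution of $M$ of length $d$.

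\textbf{Your route.} You reduce to $\operatorname{Ext}^{d+1}_{\mathcal A_G}(M,N)=0$ for finite-dimensional $M$. This uses the Baer criterion with noetherian test objects in the locally noetherian category $\mathcal A_G$, applied to a $d$-th cosyzygy. You then identify the group with $H^{d+1}_{\mathrm{cts}}(G;M^\vee\otimes_k N)$, using that $M^\vee\otimes_k-$ is exact and preserves injectives. Finally you invoke $\operatorname{cd}_p(G)=d$ from Lazard and Serre.

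\textbf{What each buys.} The paper's argument is shorter given the citation. It also yields explicit injective resolutions by coinduced modules, of the same shape it reuses for $B\simeq\mathbb Z_p$ in Proposition~\ref{prop7.6}. However, it depends on the specific finite-type resolution of $k$, which itself encodes Lazard's theorem by dualizing a finite free resolution over the Iwasawa algebra. Your argument uses only the numerical bound $\operatorname{cd}_p(G)\le d$. It therefore proves more generally that $\operatorname{gldim}(\mathcal A_G)\le\operatorname{cd}_p(G)$ for every profinite $G$. In fact equality holds, since $\operatorname{Ext}^i_{\mathcal A_G}(k,-)=H^i_{\mathrm{cts}}(G;-)$.

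\textbf{Checks you must supply.} The cost of your route is two foundational facts you already flag. One is the Baer criterion with noetherian test objects in $\mathcal A_G$. The other is the identification of the derived functors of $(-)^G$ on $\mathcal A_G$ with continuous cohomology, which holds because every injective is a summand of some $C(G,V)$, and these modules are cohomologically acyclic. Both are standard and hold here.
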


\begin{proof}
Taking \(U=G\) in \cite[Proposition~2]{HeyerSchneider}, we obtain an
injective resolution of the tensor unit of length \(d\) whose terms are finite direct sums of \(C(G,k)\).
Let \(M\in\mathcal A_G\). Since \(k\) is a field, tensoring this
resolution over \(k\) with \(M\) preserves exactness. The isomorphisms
constructed in the proof of \cite[Lemma~5]{HeyerSchneider} give
natural \(G\)-equivariant isomorphisms
\[
 M\otimes_k C(G,k)
   \simeq M_{\mathrm{triv}}\otimes_k C(G,k)
   \simeq C(G,M_{\mathrm{triv}}).
\]
The last object is injective. Indeed, evaluation at the identity
gives a natural isomorphism of contravariant functors on
\(\mathcal A_G\):
\[
 \operatorname{Hom}_{\mathcal A_G}
 \bigl(-,C(G,M_{\mathrm{triv}})\bigr)
 \simeq
 \operatorname{Hom}_k
 \bigl(\operatorname{Res}^G_1(-),M\bigr).
\]
The restriction functor
\(
 \operatorname{Res}^G_1:\mathcal A_G\to\operatorname{Mod}(k)
\)
is exact, and \(\operatorname{Hom}_k(-,M)\) is exact because every
\(k\)-vector space is injective. Hence every object of
\(\mathcal A_G\) has injective dimension at most \(d\). This concludes the proof.
\end{proof}

\section{Selected restriction}
\label{sec:continuity}

In this section we introduce the stalkwise \(H\)-strata \(Z_G(H;x)\) and their associated localizing ideals
\(\mathcal L_G(H;x)\). We recover their compact parts from finite quotients and establish selected-restriction equivalences for finite
groups and countably based profinite groups.

\begin{notation}
For \(x\in X_G\), let \(\mathcal P_x\subseteq\mathscr K(G)\) be the
corresponding prime thick ideal and write
\[
 q_x:\mathscr T(G)\longrightarrow\mathscr T(G)_x
 :=\frac{\mathscr T(G)}
         {\operatorname{Loc}^{\otimes}(\mathcal P_x)}
\]
for the  localization.  We write \(j_x\) for the right adjoint of \(q_x\). Its compact spectrum is canonically
identified with \(
 \operatorname{Spc}\bigl(\mathscr T(G)_x^c\bigr)
 \simeq\operatorname{gen}_{X_G}(x).
\)
\end{notation}

\begin{definition}
\label{def6.2}
For \(x=\mathcal P_G(H,\mathfrak q)\), define the stalkwise \(H\)-stratum at \(x\) by
\[
 Z_G(H;x):=
 \sigma_G^{-1}([H]_G)\cap\operatorname{gen}_{X_G}(x).
\]
Whenever \(Z_G(H;x)\) is Thomason in the stalk spectrum, put
\begingroup
\setlength{\belowdisplayskip}{0pt}
\setlength{\belowdisplayshortskip}{0pt}
\[
 \begin{aligned}
 \bigl(\mathscr T(G)_x^c\bigr)_{Z_G(H;x)}
 &:={}
 \bigl\{a\in\mathscr T(G)_x^c
       \mid\operatorname{supp}(a)\subseteq Z_G(H;x)\bigr\},\\
 \mathcal L_G(H;x)
 &:={}
 \operatorname{Loc}^{\otimes}_{\mathscr T(G)_x}
 \langle\bigl(\mathscr T(G)_x^c\bigr)_{Z_G(H;x)}\rangle.
 \end{aligned}
\]
\endgroup
We denote the tensor unit of \(\mathcal L_G(H;x)\) by \(e_G(H;x)\).
\end{definition}

\begin{lemma}
\label{lem6.3}
Let \(G\) be profinite and let
\(x=\mathcal P_G(H,\mathfrak q)\), where \(H\) is topologically finitely generated.
There is an open normal subgroup \(N_0\trianglelefteq_oG\) such that
\[
 Z_G(H;x)=\mathrm{supp}_{X_G}(c_H)\cap \operatorname{gen}_{X_G}(x)
\]
where \(c_H:=
 \bigotimes_{M_i\in \mathrm{Max}_o(H)}
 \operatorname{kos}_G(M_iN_0)\in\mathscr K(G)\).  In particular,
\(Z_G(H;x)\) is a Thomason subset in \(
 \operatorname{Spc}\bigl(\mathscr T(G)_x^c\bigr)\).
\end{lemma}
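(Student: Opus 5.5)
The plan is to compute the support of \(c_H\) pointwise using the Koszul criterion of Recollection~\ref{rec2.6}, and then to choose \(N_0\) so that this support sees exactly the points of \(\operatorname{gen}_{X_G}(x)\) whose subgroup parameter is \([H]_G\).

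First, since \(H\) is topologically finitely generated, it has only finitely many open subgroups of index \(p\). For a pro-\(p\) group these are exactly the maximal proper open subgroups, so \(\mathrm{Max}_o(H)=\{M_1,\dots,M_m\}\) is finite and \(c_H\) is a well-defined object of \(\mathscr K(G)\) for any \(N_0\). Support is multiplicative on tensor products, so by Recollection~\ref{rec2.6}
\[
 \mathcal P_G(L,\mathfrak r)\in\operatorname{supp}(c_H)
 \Longleftrightarrow
 L\nleq_G M_iN_0 \text{ for all } i .
\]
Now let \(y=\mathcal P_G(L,\mathfrak r)\in\operatorname{gen}_{X_G}(x)\). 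By Recollection~\ref{rec2.6} we may conjugate so that \(L\leq H\), and we split into two cases.

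\emph{Case \(L\) not conjugate to \(H\).} Then \(L<H\) is a proper closed subgroup. A proper closed subgroup of a profinite group lies in a proper open subgroup, hence in some \(M_i\). Therefore \(L\leq M_iN_0\) and \(y\notin\operatorname{supp}(c_H)\), whatever \(N_0\) is.

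\emph{Case \(L\sim_G H\).} Here it suffices to choose \(N_0\) with \(H\nleq_G M_iN_0\) for every \(i\). This choice of \(N_0\) is the main step.

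Fix \(i\). For each open normal \(N\) set
\[
 S_N:=\{g\in G\mid H\leq gM_iNg^{-1}\}.
\]
Each \(S_N\) is closed, and \(S_N\) decreases as \(N\) shrinks. Because \(M_i\) is closed, \(\bigcap_N M_iN=M_i\), so
\[
 \bigcap_N S_N=\{g\mid H\leq gM_ig^{-1}\}.
\]
I claim this intersection is empty. Suppose \(H\leq gM_ig^{-1}\). Then \(H\leq gM_ig^{-1}<gHg^{-1}\), so \(H\) is conjugate to a proper subgroup of itself. This is impossible in a profinite group. Indeed, \(H\leq gHg^{-1}\) forces \(HN/N=gHg^{-1}N/N\) in every finite quotient, because the two have equal order. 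Passing to the limit gives \(H=gHg^{-1}\). That in turn forces \(H=gM_ig^{-1}\), contradicting \([H:M_i]=p\).

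By compactness of \(G\), some \(S_{N^{(i)}}\) is already empty. I then take \(N_0:=\bigcap_i N^{(i)}\), which is open and normal; enlarging \(N\) only shrinks the sets, so \(S_{N_0}=\varnothing\) for every \(i\). I expect this compactness argument, and in particular the no-proper-self-conjugation fact, to be the only real obstacle.

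Combining the two cases gives
\[
 Z_G(H;x)=\operatorname{supp}(c_H)\cap\operatorname{gen}_{X_G}(x).
\]
Finally, under the identification \(\operatorname{Spc}(\mathscr T(G)_x^c)\simeq\operatorname{gen}_{X_G}(x)\), the set \(\operatorname{supp}(c_H)\cap\operatorname{gen}_{X_G}(x)\) is the support of the compact object \(q_x(c_H)\). The support of a compact object is closed with quasi-compact open complement, so \(Z_G(H;x)\) is Thomason in the stalk spectrum.
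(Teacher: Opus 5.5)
Your proposal is correct and follows essentially the same route as the paper: the Koszul support criterion, a compactness argument over the closed sets $S_N$ to produce $N_0$, and the split between generalizations with parameter $[H]_G$ and those with a strictly smaller parameter, which lie in some $M_i$. Your finite-quotient argument that $H$ cannot be conjugate into a proper subgroup of itself supplies the step the paper only calls ``impossible''; note one slip in the direction of monotonicity: it is shrinking $N$, not enlarging it, that shrinks $S_N$, and this is what gives $S_{N_0}\subseteq S_{N^{(i)}}=\varnothing$.
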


\begin{proof}
Note that for a topologically finitely generated pro-\(p\) group \(H\), \(\mathrm{Max}_o(H)=\{M_1,\cdots, M_s\}\) is finite.
We first choose \(N_0\trianglelefteq_oG\) such that no conjugate of
\(H\) is contained in any \(M_iN_0\). For every \(i\), we claim that there exists
\(N_i\trianglelefteq_oG\) such that
\(
 H^g\nleq M_iN_i\)
for every \(g\in G.
\)
Fix \(i\) and suppose that no such \(N_i\) exists.
Then, for every \(N\trianglelefteq_oG\), the closed subset
\[
 T_{i,N}:=
 \{g\in G\mid H^g\leq M_iN\}
\]
would be nonempty. Moreover,
\(
 T_{i,N_1\cap\cdots\cap N_r}
 \subseteq
 T_{i,N_1}\cap\cdots\cap T_{i,N_r}.
\)
Thus the family \(\{T_{i,N}\}_N\) would have the finite-intersection
property. Compactness of \(G\) would give some
\(
 g\in\bigcap_{N\trianglelefteq_oG}T_{i,N}.
\)
Since \(M_i\) is closed, we have
\(
 H^g
 \leq
 \bigcap_{N\trianglelefteq_oG}M_iN
 =
 M_i
 <
 H,
\) which is impossible.

We may therefore choose, for every \(i\), an open normal subgroup
\(N_i\trianglelefteq_oG\) such that no conjugate of \(H\) is contained
in \(M_iN_i\). Put
\(
 N_0:=\bigcap_{i=1}^sN_i.
\)
Then \(N_0\trianglelefteq_oG\), and no conjugate of \(H\) is contained
in any \(M_iN_0\). Since \(N_0\) is normal and open, every \(M_iN_0\)
is an open subgroup of \(G\), so
\(c_H:=\bigotimes_{i=1}^s\operatorname{kos}_G(M_iN_0)\in\mathscr K(G)\) is defined and compact.

Let
\(
 y=\mathcal P_G(K,\mathfrak r)
 \in\operatorname{gen}_{X_G}(x).
\)
By \cite[Corollary~3.17]{BalmerGallauerArtin}, one has
\(K\leq_GH\). We also have
\[
 \begin{aligned}
 y\in\operatorname{supp}_{X_G}(c_H)
 &\quad\Longleftrightarrow\quad
 y\in
 \operatorname{supp}_{X_G}
 \bigl(\operatorname{kos}_G(M_iN_0)\bigr)
 \quad\text{for every }i\\
 &\quad\Longleftrightarrow\quad
 K\nleq_GM_iN_0
 \quad\text{for every }i.
 \end{aligned}
\]

Suppose first that the subgroup parameter of \(y\) is \([H]_G\).
Then \(K\) is \(G\)-conjugate to \(H\), and the choice of \(N_0\)
shows that
\(
 K\nleq_GM_iN_0\)
 for every \(i.
\)
Hence \(y\in\operatorname{supp}_{X_G}(c_H)\).

Conversely, suppose that the subgroup parameter of \(y\) is different
from \([H]_G\). Since \(K\leq_GH\), after conjugation we may assume
that \(K<H\). Every proper closed subgroup of a pro-\(p\) group is
contained in a maximal proper open subgroup, so
\(
 K\leq M_i
\)
for some \(i\). Therefore
\(
 K\leq_GM_iN_0,
\)
and thus
\(y\notin\operatorname{supp}_{X_G}(c_H)\). This concludes the proof.
\end{proof}

\begin{lemma}
\label{lem6.4}
Suppose that \(x=\mathcal P_G(H,\mathfrak q)\) is a weakly visible point in \(X_G\), and that \(Z_G(H;x)\) is the support of a compact object
\(c\in\mathscr T(G)_x^c\).  Then
\[
 \mathcal L_G(H;x)
 =\operatorname{Loc}^{\otimes}_{\mathscr T(G)_x}(c)
\]
is a compactly generated smashing ideal and is independent of
the choice of \(c\).  Moreover,
\(
 q_x(g_x)\simeq e_G(H;x)\otimes q_x(g_x),
\)
and \(q_x\) restricts to an equivalence of localizing ideals
\[
 q_x:
 \operatorname{Loc}^{\otimes}_{\mathscr T(G)}(g_x)
 \xrightarrow{\ \sim\ }
 \operatorname{Loc}^{\otimes}_{\mathscr T(G)_x}(q_x(g_x)).
\]
\end{lemma}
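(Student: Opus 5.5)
The plan is to work throughout with Balmer--Favi idempotents. The localization \(q_x\) is the smashing (finite) localization away from \(\mathcal P_x\). Write \(V_x:=\operatorname{supp}(\mathcal P_x)=X_G\setminus\operatorname{gen}_{X_G}(x)\), which is a Thomason subset, and let \(e_{V_x}\to\mathbb 1\to f_{V_x}\to\) be the associated idempotent triangle. Then \(q_x\) is identified with \(f_{V_x}\otimes-\) on \(\mathscr T(G)\). Its right adjoint \(j_x\) is fully faithful, preserves coproducts, and its essential image is the \(f_{V_x}\)-local objects. It also satisfies the projection formula \(j_x(s\otimes q_x t)\simeq j_x(s)\otimes t\).

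Since \(x\) is weakly visible, I write \(\{x\}=Y\cap W^c\) with \(Y,W\) Thomason and put \(g_x=e_Y\otimes f_W\). This object is independent of the chosen presentation by Balmer--Favi.

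\emph{Step 1 (the ideal \(\mathcal L_G(H;x)\)).} By Balmer's classification of thick tensor ideals of \(\mathscr T(G)_x^c\) via Thomason subsets of \(\operatorname{gen}_{X_G}(x)\), the hypothesis \(\operatorname{supp}(c)=Z_G(H;x)\) gives
\[(\mathscr T(G)_x^c)_{Z_G(H;x)}=\operatorname{thick}^{\otimes}(c).\]
Hence \(\mathcal L_G(H;x)=\operatorname{Loc}^{\otimes}(c)\). The left-hand side does not mention \(c\), so this ideal is independent of the choice of \(c\). It is generated by compact objects, so it is compactly generated and smashing (a finite localization). Its unit \(e_G(H;x)\) is therefore the left idempotent \(e_{Z}\) attached to the Thomason subset \(Z=Z_G(H;x)\) of the stalk spectrum.

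\emph{Step 2 (\(q_x(g_x)\) and absorption).} First, \(q_x(g_x)\) is the Balmer--Favi idempotent of \(x\) in \(\mathscr T(G)_x\). Indeed, \(q_x\) is a tensor functor sending \(e_Y,f_W\) to the idempotents for \(Y\cap\operatorname{gen}(x)\) and \(W\cap\operatorname{gen}(x)\), and \(\{x\}=(Y\cap\operatorname{gen}(x))\setminus(W\cap\operatorname{gen}(x))\). Next, \(x\) has subgroup parameter \([H]_G\), so \(x\in Z\). Hence \(\{x\}=(Y'\cap Z)\cap W'^c\), where \(Y',W'\) are the restricted Thomason sets. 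By independence of presentation, and using \(e_{Y'}\otimes e_Z\simeq e_{Y'\cap Z}\), we get
\[q_x(g_x)\simeq e_{Y'\cap Z}\otimes f_{W'}\simeq e_Z\otimes q_x(g_x),\]
which is the claimed identity \(q_x(g_x)\simeq e_G(H;x)\otimes q_x(g_x)\).

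\emph{Step 3 (the equivalence).} The key point is \(e_{V_x}\otimes g_x=0\). Since \(x\notin V_x\), we have \(Y\cap V_x\subseteq W\). Then \(e_{V_x}\otimes e_Y\simeq e_{Y\cap V_x}\in\operatorname{Loc}(\mathscr K(G)_{W})\), and this ideal is killed by \(f_W\). Consequently \(g_x\simeq f_{V_x}\otimes g_x\) is \(f_{V_x}\)-local. Because \(f_{V_x}\otimes-\) is smashing, the whole ideal \(\operatorname{Loc}^{\otimes}(g_x)\) consists of \(f_{V_x}\)-local objects. On these objects \(q_x\) is fully faithful with quasi-inverse \(j_x\).

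It remains to match the two ideals. The image \(q_x(\operatorname{Loc}^{\otimes}(g_x))\) is a localizing ideal containing \(q_x(g_x)\), because \(q_x\) is an essentially surjective coproduct-preserving tensor functor. Conversely, by the projection formula, \(j_x(s\otimes q_x(g_x))\simeq j_x(s)\otimes g_x\in\operatorname{Loc}^{\otimes}(g_x)\). Since \(j_x\) preserves coproducts, it carries \(\operatorname{Loc}^{\otimes}(q_x(g_x))\) into \(\operatorname{Loc}^{\otimes}(g_x)\). These two inclusions give mutually inverse equivalences.

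The main obstacle is Step 3's vanishing \(e_{V_x}\otimes g_x=0\). It requires care with the set-theoretic identity \(Y\cap V_x\subseteq W\) and with the fact that \(e_{Y\cap V_x}\) lies in the localizing ideal generated by compacts supported in \(W\). Once this is in place, the remaining steps are formal manipulations of idempotents.
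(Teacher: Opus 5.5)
Your proposal is correct. The paper gives no argument at all (it calls the lemma "a routine exercise"), and your Balmer--Favi computation supplies exactly that routine verification:
\begin{itemize}
\item Step~1 uses Balmer's classification to get $\mathcal L_G(H;x)=\operatorname{Loc}^{\otimes}(c)=e_Z\otimes\mathscr T(G)_x$.
\item Step~2 uses independence of the presentation of $g_x$ to get $q_x(g_x)\simeq e_Z\otimes q_x(g_x)$.
\item Step~3 uses $e_{V_x}\otimes g_x=0$ to show that $\operatorname{Loc}^{\otimes}(g_x)$ consists of $q_x$-local objects.
\end{itemize}

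There is one small slip in Step~3. Both facts you prove there give the same direction. Your first claim gives $\operatorname{Loc}^{\otimes}(q_x(g_x))\subseteq q_x(\operatorname{Loc}^{\otimes}(g_x))$. Your second claim says $j_x$ carries $\operatorname{Loc}^{\otimes}(q_x(g_x))$ into $\operatorname{Loc}^{\otimes}(g_x)$, which after applying $q_x$ gives that same containment again.

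For $q_x$ to restrict to a functor $\operatorname{Loc}^{\otimes}(g_x)\to\operatorname{Loc}^{\otimes}(q_x(g_x))$, you also need the reverse containment $q_x(\operatorname{Loc}^{\otimes}(g_x))\subseteq\operatorname{Loc}^{\otimes}(q_x(g_x))$. This is immediate: $q_x^{-1}\bigl(\operatorname{Loc}^{\otimes}(q_x(g_x))\bigr)$ is a localizing ideal of $\mathscr T(G)$ containing $g_x$. With that line added, $q_x$ and $j_x$ restrict to mutually inverse equivalences as you claim.
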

\begin{proof}
This is a routine exercise.
\end{proof}

\begin{corollary}
\label{cor6.5}
Under the hypotheses of
Lemma~\ref{lem6.4},
\(\mathscr T(G)\) satisfies minimality at
\(x=\mathcal P_G(H,\mathfrak q)\) if and only if
\(
 \operatorname{Loc}^{\otimes}_{\mathcal L_G(H;x)}
       \bigl(q_x(g_x)\bigr)
\)
is a minimal localizing ideal of \(\mathcal L_G(H;x)\).
\end{corollary}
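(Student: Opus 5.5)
Recall that minimality at a weakly visible point \(x\) means that \(\operatorname{Loc}^{\otimes}_{\mathscr T(G)}(g_x)\) is a minimal localizing ideal of \(\mathscr T(G)\). Here \(g_x=e_Y\otimes f_{Z}\) is the Balmer--Favi idempotent, where \(\{x\}=Y\cap Z^c\) with \(Y,Z\) Thomason. The proof is a transport of minimality through the two equivalences supplied by Lemma~\ref{lem6.4}. I will carry it out in three steps.

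\emph{Step 1: pass to the stalk.} By Lemma~\ref{lem6.4}, \(q_x\) restricts to an equivalence of localizing ideals
\[
 \operatorname{Loc}^{\otimes}_{\mathscr T(G)}(g_x)\xrightarrow{\ \sim\ }\operatorname{Loc}^{\otimes}_{\mathscr T(G)_x}\bigl(q_x(g_x)\bigr).
\]
I will argue that this equivalence matches localizing ideals of \(\mathscr T(G)\) contained in \(\operatorname{Loc}^{\otimes}(g_x)\) with localizing ideals of \(\mathscr T(G)_x\) contained in \(\operatorname{Loc}^{\otimes}(q_x(g_x))\). Any such ideal \(\mathcal M\) satisfies \(\mathcal M=g_x\otimes\mathcal M\), so it is killed by nothing in \(\operatorname{Loc}^{\otimes}(\mathcal P_x)\). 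Concretely, \(g_x\otimes\operatorname{Loc}^{\otimes}(\mathcal P_x)=0\), because \(x\notin\operatorname{supp}(p)\) for \(p\in\mathcal P_x\). Hence \(j_xq_x\) acts as the identity on \(\operatorname{Loc}^{\otimes}(g_x)\). A localizing ideal \(\mathcal N\) of \(\mathscr T(G)_x\) pulls back to the localizing ideal \(j_x\mathcal N\cap\operatorname{Loc}^{\otimes}(g_x)\), using the projection formula for the finite localization \(q_x\). This yields the bijection, and in particular minimality at \(x\) is equivalent to minimality of \(\operatorname{Loc}^{\otimes}_{\mathscr T(G)_x}(q_x(g_x))\) among localizing ideals of \(\mathscr T(G)_x\).

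\emph{Step 2: pass to the stratum.} The ideal \(\mathcal L:=\mathcal L_G(H;x)\) is smashing with unit \(e:=e_G(H;x)\), so it is itself a tensor-triangulated category with unit \(e\). Its localizing ideals are exactly the localizing ideals of \(\mathscr T(G)_x\) contained in \(\mathcal L\): any localizing subcategory closed under \(e\otimes-\) inside \(\mathcal L\) is closed under \(t\otimes-\), since \(t\otimes m\simeq (e\otimes t)\otimes m\). Since \(q_x(g_x)\simeq e\otimes q_x(g_x)\) lies in \(\mathcal L\), we have \(\operatorname{Loc}^{\otimes}_{\mathcal L}(q_x(g_x))=\operatorname{Loc}^{\otimes}_{\mathscr T(G)_x}(q_x(g_x))\). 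Moreover the two ideal lattices below it coincide, so the minimality conditions agree.

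Combining Steps 1 and 2 gives both implications. The only delicate point I expect is the lattice correspondence in Step 1. That is, I must verify that an arbitrary nonzero localizing ideal inside \(\operatorname{Loc}^{\otimes}_{\mathscr T(G)_x}(q_x(g_x))\) comes from a tensor ideal of \(\mathscr T(G)\), not merely from a localizing subcategory. I would handle this by noting that \(q_x\) is a symmetric monoidal Bousfield localization, so tensor ideals of the localized category correspond to tensor ideals of \(\mathscr T(G)\) on which \(q_x\) is fully faithful.
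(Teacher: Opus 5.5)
Your proof is correct and follows the paper's argument. You transport minimality to the stalk through the equivalence \(q_x\) of Lemma~\ref{lem6.4}, and then use \(q_x(g_x)\simeq e_G(H;x)\otimes q_x(g_x)\) together with \(t\otimes m\simeq (e\otimes t)\otimes m\) to identify localizing ideals of \(\mathscr T(G)_x\) below \(\operatorname{Loc}^{\otimes}(q_x(g_x))\) with localizing ideals of \(\mathcal L_G(H;x)\). Your Step~1 spells out, via \(j_x\) and the projection formula, the ideal-lattice correspondence that the paper leaves implicit when it passes from the equivalence of Lemma~\ref{lem6.4} to ``minimal in \(\mathscr T(G)\) iff minimal in \(\mathscr T(G)_x\)''.
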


\begin{proof}
Lemma~\ref{lem6.4} gives an equivalence
\[
 q_x:
 \operatorname{Loc}^{\otimes}_{\mathscr T(G)}(g_x)
 \xrightarrow{\sim}
 \operatorname{Loc}^{\otimes}_{\mathscr T(G)_x}
       \bigl(q_x(g_x)\bigr)
\]
and the identity \(
 q_x(g_x)\simeq e_G(H;x)\otimes q_x(g_x)
\) implies
\[
 \operatorname{Loc}^{\otimes}_{\mathscr T(G)_x}
       \bigl(q_x(g_x)\bigr)
 =
 \operatorname{Loc}^{\otimes}_{\mathcal L_G(H;x)}
       \bigl(q_x(g_x)\bigr).
\]
Suppose that
\(
 \mathcal J\subseteq
 \operatorname{Loc}^{\otimes}_{\mathscr T(G)_x}
       \bigl(q_x(g_x)\bigr)
\)
is a localizing ideal of \(\mathscr T(G)_x\).  The above
equality implies that \(\mathcal J\subseteq\mathcal L_G(H;x)\).
Moreover, for \(\ell\in\mathcal L_G(H;x)\) and \(j\in\mathcal J\), one
has \(\ell\otimes j\in\mathcal J\), since
\(\ell\in\mathscr T(G)_x\).  Hence \(\mathcal J\) is a localizing ideal of \(\mathcal L_G(H;x)\).

Conversely, suppose that
\(
 \mathcal J\subseteq
 \operatorname{Loc}^{\otimes}_{\mathcal L_G(H;x)}
       \bigl(q_x(g_x)\bigr)
\)
is a localizing ideal of \(\mathcal L_G(H;x)\).  For
\(t\in\mathscr T(G)_x\) and \(j\in\mathcal J\), we have
\(
 t\otimes j
 \simeq
 t\otimes e_G(H;x)\otimes j\in \mathcal J.
\)
Thus \(\mathcal J\) is a localizing  ideal of
\(\mathscr T(G)_x\). It follows that
\[
\begin{aligned}
&
 \operatorname{Loc}^{\otimes}_{\mathscr T(G)}(g_x)
 \text{ is minimal in }\mathscr T(G)
\\
&\qquad\Longleftrightarrow
 \operatorname{Loc}^{\otimes}_{\mathscr T(G)_x}
       \bigl(q_x(g_x)\bigr)
 \text{ is minimal in }\mathscr T(G)_x
\\
&\qquad\Longleftrightarrow
 \operatorname{Loc}^{\otimes}_{\mathcal L_G(H;x)}
       \bigl(q_x(g_x)\bigr)
 \text{ is minimal in }\mathcal L_G(H;x).
\end{aligned}\]
\end{proof}

\begin{lemma}
\label{lem6.6}
Let \(I\) be a small filtered poset. For \(i\leq j\), let
\(
 F_{ij}:\mathcal A_i\longrightarrow\mathcal A_j
\)
be exact strong symmetric monoidal functors between essentially small idempotent-complete
tt-categories with
\(
 F_{ii}=\operatorname{id}_{\mathcal A_i}\) and
 \(F_{jk}F_{ij}=F_{ik} \) for \(
 i\leq j\leq k.
\)
Let \(\mathcal A\) be an idempotent-complete essentially small
tt-category, and let
\(
 u_i:\mathcal A_i\longrightarrow\mathcal A
\)
be exact strong symmetric monoidal functors satisfying
\(
 u_jF_{ij}=u_i\)
 for \(i\leq j.\) Assume the following two conditions.
\begin{enumerate}
\item[\textup{(i)}]
For all \(i,j\in I\), \(a_i\in\mathcal A_i\), and
\(b_j\in\mathcal A_j\), the canonical map
\[
 \operatorname*{colim}_{k\geq i,j}
 \operatorname{Hom}_{\mathcal A_k}
 \bigl(F_{ik}(a_i),F_{jk}(b_j)\bigr)
 \xrightarrow{\ \sim\ }
 \operatorname{Hom}_{\mathcal A}
 \bigl(u_i(a_i),u_j(b_j)\bigr)
\]
is an isomorphism.

\item[\textup{(ii)}]
Every object of \(\mathcal A\) is a retract of \(u_i(a_i)\) for some
\(i\in I\) and some \(a_i\in\mathcal A_i\).
\end{enumerate}

Then for \(i_0\in I\) and \(c_{i_0}\in\mathcal A_{i_0}\), the canonical functor induces an exact tensor equivalence
\[
  \operatorname*{colim}_{i\geq i_0}
  \operatorname{thick}^{\otimes}_{\mathcal A_i}(F_{i_0i}(c_{i_0}))
 \xrightarrow{\ \sim\ }
 \operatorname{thick}^{\otimes}_{\mathcal A}(u_{i_0}(c_{i_0})).
\]

\end{lemma}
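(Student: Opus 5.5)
We first say what the colimit on the left means, and then check that the comparison functor is fully faithful and essentially surjective.

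\textbf{The colimit and the comparison functor.} For \(i\geq i_0\) put \(\mathcal B_i:=\operatorname{thick}^{\otimes}_{\mathcal A_i}(F_{i_0i}(c_{i_0}))\). Since each \(F_{ij}\) is exact and strong monoidal, it sends the tensor-thick ideal generated by \(F_{i_0i}(c_{i_0})\) into the one generated by \(F_{i_0j}(c_{i_0})=F_{ij}F_{i_0i}(c_{i_0})\). So the \(F_{ij}\) restrict to functors \(\mathcal B_i\to\mathcal B_j\). The filtered colimit \(\mathcal B:=\operatorname*{colim}_{i\geq i_0}\mathcal B_i\) is then the usual one:
\begin{itemize}
\item objects are pairs \((i,b)\) with \(b\in\mathcal B_i\);
\item morphisms are \(\operatorname*{colim}_k\operatorname{Hom}_{\mathcal A_k}(F_{ik}b,F_{jk}b')\).
\end{itemize}
This is triangulated and tensor: triangles and tensor products are formed at a common later stage, using filtered-ness. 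We take its idempotent completion if necessary, since the target is idempotent complete.

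Likewise \(u_i\) maps \(\mathcal B_i\) into \(\mathcal B^\infty:=\operatorname{thick}^{\otimes}_{\mathcal A}(u_{i_0}(c_{i_0}))\). The compatibilities \(u_jF_{ij}=u_i\) assemble these maps into an exact tensor functor \(\Phi:\mathcal B\to\mathcal B^\infty\).

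\textbf{Full faithfulness.} This is immediate from (i). Each \(\mathcal B_k\subseteq\mathcal A_k\) is full, so the Hom-colimit defining \(\mathcal B\) is exactly the left side of (i), and \(\Phi\) induces the isomorphism stated there.

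\textbf{Essential surjectivity.} Let \(\mathcal E\subseteq\mathcal B^\infty\) be the full subcategory of objects isomorphic to a retract of \(u_i(b)\) for some \(i\geq i_0\) and some \(b\in\mathcal B_i\). It suffices to show that \(\mathcal E\) is a thick tensor ideal of \(\mathcal A\) containing \(u_{i_0}(c_{i_0})\). Retracts are then handled by idempotent completeness together with full faithfulness: an idempotent on \(u_i(b)\) lifts, by (i), to an endomorphism at a later stage that is idempotent after a further refinement, and it splits in the idempotent-complete \(\mathcal B_k\). This is presumably where idempotent completeness of the \(\mathcal A_i\) is used.

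The closure properties of \(\mathcal E\) are checked as follows.
\begin{itemize}
\item \emph{Shifts} are clear.
\item \emph{Tensor ideal property.} For \(a\in\mathcal A\), condition (ii) writes \(a\) as a retract of some \(u_j(a_j)\). Then \(a\otimes u_i(b)\) is a retract of \(u_k(F_{jk}a_j\otimes F_{ik}b)\), and \(F_{jk}a_j\otimes F_{ik}b\in\mathcal B_k\) because \(\mathcal B_k\) is an ideal of \(\mathcal A_k\).
\item \emph{Cones.} Take \(f:e\to e'\) with \(e,e'\in\mathcal E\), where \(e\) is a retract of \(u_i(b)\) and \(e'\) a retract of \(u_j(b')\). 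Compose to obtain \(\tilde f:u_i(b)\to u_j(b')\) with \(f\) a retract of \(\tilde f\) in the arrow category. By (i), \(\tilde f\) lifts to a map \(g\) in some \(\mathcal A_k\), so \(\operatorname{cone}(\tilde f)\simeq u_k(\operatorname{cone}g)\) with \(\operatorname{cone} g\in\mathcal B_k\).
\end{itemize}

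\textbf{Main obstacle.} The delicate step is the last one, showing that \(\operatorname{cone}(f)\) is a retract of \(\operatorname{cone}(\tilde f)\). Cones are not functorial, so this needs care. We plan to use that \(\operatorname{cone}(f)\oplus\operatorname{cone}(\text{complementary map})\) is a cone of \(\tilde f\), in the form \(\tilde f\simeq f\oplus f'\) up to isomorphism. Since \(u_i(b)\simeq e\oplus e_1\) and \(u_j(b')\simeq e'\oplus e_1'\) in the idempotent-complete \(\mathcal A\), we can replace \(\tilde f\) by \(f\oplus 0_{e_1\to e_1'}\). That map still lies in the image, because it is \(\tilde f\) precomposed and postcomposed with idempotents, which themselves lift via (i). Its cone is \(\operatorname{cone}(f)\oplus e_1[1]\oplus e_1'\), which contains \(\operatorname{cone}(f)\) as a summand.

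Hence \(\mathcal E=\mathcal B^\infty\), so \(\Phi\) is essentially surjective and therefore an exact tensor equivalence.
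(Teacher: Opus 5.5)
Your proof is correct and follows essentially the same route as the paper: full faithfulness comes from (i), and essential surjectivity comes from showing that the retract-closed image is a thick tensor ideal containing \(u_{i_0}(c_{i_0})\), with (ii) giving the ideal property and idempotent completeness of the stages handling retracts. The only difference is the order of steps. The paper first shows that the essential image is closed under retracts (via idempotent completeness of the colimit), after which cones are taken at a common finite stage using (i); so your \(f\oplus 0\) argument for cones is valid but not needed.
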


\begin{proof}
For \(i\geq i_0\), put
\(
 \mathcal J_i:=
 \operatorname{thick}^{\otimes}_{\mathcal A_i}(c_i),\)
 \(\mathcal J:=
 \operatorname{thick}^{\otimes}_{\mathcal A}(c)
\), where \(
 c_i:=F_{i_0i}(c_{i_0}),\)
\(
 c:=u_{i_0}(c_{i_0}).
\)
Since the transition functors are exact and strong symmetric
monoidal, they restrict to exact tensor functors
\[
 F_{ij}:\mathcal J_i\longrightarrow\mathcal J_j
 \qquad (i_0\leq i\leq j).
\]

Let \(\mathcal D\) be the filtered categorical colimit of the
\(\mathcal J_i\). Concretely, its objects may be represented by pairs
\((i,d_i)\), where \(i\geq i_0\) and \(d_i\in\mathcal J_i\), and
\[
 \operatorname{Hom}_{\mathcal D}
 \bigl((i,d_i),(j,e_j)\bigr)
 =
 \operatorname*{colim}_{k\geq i,j}
 \operatorname{Hom}_{\mathcal J_k}
 \bigl(F_{ik}(d_i),F_{jk}(e_j)\bigr).
\]
The cocone functors \(u_i\) induce a functor
\(
 v:\mathcal D\longrightarrow\mathcal A.
\)
Since every \(\mathcal J_k\) is a full subcategory of
\(\mathcal A_k\), condition \textup{(i)} shows that \(v\) is fully
faithful.

Let \(\mathcal E\) denote the essential image of \(v\). It is a
triangulated subcategory of \(\mathcal A\). Indeed, every morphism
between two objects in the image is represented, after passing to a
common stage, by a morphism
\(
 f_k:F_{ik}(d_i)\longrightarrow F_{jk}(e_j)
\)
in some \(\mathcal J_k\). Its cone belongs to \(\mathcal J_k\), and
exactness of \(u_k\) identifies the image of this cone with the cone
of the original morphism in \(\mathcal A\). Each \(\mathcal J_i\) is
idempotent complete, and filtered categorical colimits preserve
idempotent completeness. Hence \(\mathcal D\) is idempotent complete,
and the full faithfulness of \(v\) shows that \(\mathcal E\) is closed
under retracts. Thus \(\mathcal E\) is a thick triangulated subcategory
of \(\mathcal A\).

Since \(u_i(c_i)\simeq c\) and every \(u_i\) is exact and strong
symmetric monoidal, one has
\(
 u_i(\mathcal J_i)\subseteq\mathcal J.
\)
Hence
\(
 \mathcal E\subseteq\mathcal J.
\)
For the converse, first note that \(c\in\mathcal E\). We claim that
\(\mathcal E\) is a tensor ideal of \(\mathcal A\). Let
\(x\in\mathcal E\) and \(a\in\mathcal A\). By the definition of
\(\mathcal E\), one has \(x\simeq u_i(x_i)\) for some
\(x_i\in\mathcal J_i\). By condition \textup{(ii)}, the object
\(a\) is a retract of some \(u_j(a_j)\) with
\(a_j\in\mathcal A_j\). Choose \(k\geq i,j\). Then \(x\otimes a\)
is a retract of
\(
 u_k\bigl(F_{ik}(x_i)\otimes F_{jk}(a_j)\bigr).
\)
Since \(\mathcal J_k\) is a tensor ideal of \(\mathcal A_k\), one has
\(
 F_{ik}(x_i)\otimes F_{jk}(a_j)\in\mathcal J_k.
\)
Consequently, \(x\otimes a\in\mathcal E\), proving the claim.
Thus \(\mathcal E\) is a thick ideal of \(\mathcal A\)
containing \(c\). By the definition of \(\mathcal J\), this implies
\(
 \mathcal J\subseteq\mathcal E.
\)
Therefore \(\mathcal E=\mathcal J\), and \(v\) induces the desired exact tensor
equivalence.
\end{proof}

\begin{proposition}
\label{prop6.7}
Let \(G=\varprojlim_nG_n\) be an inverse limit of finite groups
with surjective transition maps. Let \(H\leq G\) be a topologically finitely generated closed pro-\(p\) group, and let
\(
 x=\mathcal P_G(H,\mathfrak q)=(x_n),
\)
and \(c_H\) be as in Lemma~\ref{lem6.3}.  Suppose that \(c_H\) is
defined over \(G_{n_0}\).  For \(n\geq n_0\), set
\(
 \mathcal L_n=
 \operatorname{Loc}^{\otimes}_{\mathscr T(G_n)_{x_n}}(c_n)
\), where \(c_n:=q_{x_n}\circ \mathrm{Infl}^{G_{n}}_{G_{n_{0}}}(c_H)\).
Then the inflation functors induce an exact tensor equivalence
\[
\operatorname*{colim}_{n\geq n_0}\mathcal L_n^c
 \xrightarrow{\ \sim\ }
 \mathcal L_G(H;x)^c.
\]
\end{proposition}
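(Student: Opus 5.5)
The plan is to apply Lemma~\ref{lem6.6} twice: first to the filtered system of stalk categories \(\mathscr T(G_n)_{x_n}^c\), and then to the thick ideals generated by \(c_n\). Put \(\mathcal A_n:=\mathscr T(G_n)_{x_n}^c\) and \(\mathcal A:=\mathscr T(G)_x^c\). By the Neeman--Thomason localization theorem these are the idempotent completions of \(\mathscr K(G_n)/\mathcal P_{x_n}\) and \(\mathscr K(G)/\mathcal P_x\). The inflation functors \(\operatorname{Infl}^{G_m}_{G_n}\) send \(\mathcal P_{x_n}\) into \(\mathcal P_{x_m}\), because \(x_n\) is the image of \(x_m\) under \(\operatorname{Spc}(\operatorname{Infl})\). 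They therefore induce exact strong monoidal functors \(F_{nm}\colon\mathcal A_n\to\mathcal A_m\) and \(u_n\colon\mathcal A_n\to\mathcal A\), compatible with composition.

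\emph{Step 1: verify (i) and (ii) of Lemma~\ref{lem6.6}.}
\begin{enumerate}
\item[(a)] We have \(\mathscr K(G)\simeq\operatorname{colim}_n\mathscr K(G_n)\). Every finite permutation complex, and every morphism between such complexes, is inflated from some finite stage, with homotopies handled as in the proof of Proposition~\ref{prop5.2}. Idempotent completion is compatible with this colimit.
\item[(b)] We have \(\mathcal P_x=\bigcup_n\operatorname{Infl}(\mathcal P_{x_n})\). An object inflated from stage \(n\) lies in \(\mathcal P_x\) if and only if it lies in \(\mathcal P_{x_n}\), since \(x\) maps to \(x_n\).
\item[(c)] Verdier quotients commute with filtered colimits of pairs (category, thick ideal) when the ideals are compatible as in (b). A morphism in \(\mathcal A\) is a roof whose cone data lie at a finite stage, and a roof vanishes if and only if it factors through an object of \(\mathcal P_x\), which also happens at a finite stage. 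Together with (a), this gives condition (i).
\item[(d)] Condition (ii) holds because every object of \(\mathcal A\) is a retract of the image of an object of \(\mathscr K(G)\), and such an object comes from some \(\mathscr K(G_n)\).
\end{enumerate}

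\emph{Step 2: apply Lemma~\ref{lem6.6} to \(c_{n_0}\).} This gives an exact tensor equivalence
\[
\operatorname*{colim}_{n\geq n_0}\operatorname{thick}^{\otimes}_{\mathcal A_n}(c_n)\xrightarrow{\sim}\operatorname{thick}^{\otimes}_{\mathcal A}(q_x(c_H)).
\]

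\emph{Step 3: identify both sides with the claimed compact parts.} On the finite side, \(\mathcal L_n\) is generated by the compact object \(c_n\). By Neeman's theorem, \(\mathcal L_n^c=\mathcal L_n\cap\mathcal A_n=\operatorname{thick}^{\otimes}_{\mathcal A_n}(c_n)\). On the profinite side, Lemma~\ref{lem6.3} says \(Z_G(H;x)=\operatorname{supp}(q_x(c_H))\) inside \(\operatorname{gen}_{X_G}(x)\). Balmer's classification of thick ideals in the stalk \(\mathcal A\) (its spectrum is \(\operatorname{gen}_{X_G}(x)\)) then identifies \((\mathscr T(G)_x^c)_{Z_G(H;x)}\) with \(\operatorname{thick}^{\otimes}(q_x(c_H))\). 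Hence \(\mathcal L_G(H;x)=\operatorname{Loc}^{\otimes}(q_x(c_H))\), and again its compact part is \(\operatorname{thick}^{\otimes}(q_x(c_H))\). This is the same identification used in Lemma~\ref{lem6.4}, and it does not need weak visibility. Finally, \(u_{n_0}(c_{n_0})=q_x(c_H)\) by construction.

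\emph{Main obstacle.} The hard part is Step 1(c), the commutation of Verdier quotients of the compact parts with the filtered colimit, together with idempotent completion. In particular, for faithfulness I need that if a morphism at stage \(n\) dies in \(\mathcal A\), then it factors through an object of \(\mathcal P_x\). That object, and the factorization, live at some stage \(m\), and by (b) the object lies in \(\mathcal P_{x_m}\), so the morphism already dies in \(\mathcal A_m\). I would argue this roof by roof, as in Proposition~\ref{prop5.2}, using (b) as the key input.
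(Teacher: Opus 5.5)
Your proposal is correct and is essentially the paper's proof. Both identify the compact stalks with \((\mathscr K(G_n)/\mathcal P_n)^\natural\) and \((\mathscr K(G)/\mathcal P)^\natural\) via Neeman--Thomason, verify the Hom-colimit condition of Lemma~\ref{lem6.6} roof by roof using \(\mathcal P_n=\mathscr K(G_n)\cap\mathcal P\), and conclude from \(\mathcal L_n^c=\operatorname{thick}^{\otimes}(c_n)\) and \(\mathcal L_G(H;x)^c=\operatorname{thick}^{\otimes}(q_x(c_H))\) via Lemma~\ref{lem6.3}. The differences are minor: you get condition (ii) directly from retracts rather than lifting idempotents to a finite stage, and you should state explicitly, as the paper does via \(f\operatorname{Hom}(a,b)e\), that the Hom isomorphism passes to the idempotent completions.
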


\begin{proof}
Let \(\mathcal P\) and \(\mathcal P_n\) be the prime thick ideals
corresponding to \(x\) and \(x_n\).  Identifying
\(\mathscr K(G_n)\) with its inflation image, one has \(
 \mathscr K(G)=\operatorname*{colim}_n\mathscr K(G_n)
 \) and \(
 \mathcal P_n=\mathscr K(G_n)\cap\mathcal P
\) by \cite[Propositions~2.4 and~3.1]{BalmerGallauerArtin}. The Neeman--Thomason localization theorem identifies the compact
objects in the stalks with
\(
 \bigl(\mathscr K(G_n)/\mathcal P_n\bigr)^\natural
\) and \(
 \bigl(\mathscr K(G)/\mathcal P\bigr)^\natural.
\) A morphism in the last Verdier quotient is represented by a roof
\[
 a\xleftarrow{s}d\xrightarrow{f}b,
 \qquad \operatorname{cone}(s)\in\mathcal P.
\]
The objects and morphisms occurring in this roof and in a chosen cone
triangle all occur in one \(\mathscr K(G_m)\) after increasing
\(m\), therefore the cone lies in \(\mathcal P_m\). This proves the surjectivity of
\[
 \operatorname*{colim}_{m\geq n}
 \operatorname{Hom}_{\mathscr K(G_m)/\mathcal P_m}(a_m,b_m)
 \longrightarrow
 \operatorname{Hom}_{\mathscr K(G)/\mathcal P}(a,b).
\]
For injectivity, suppose that two roofs represented in
\(
\mathscr K(G_n)/\mathcal P_n
\)
for some \(n\) have the same image in the final quotient.  By the calculus of
fractions, they admit a common refinement in
\(\mathscr K(G)\).  After passing to a sufficiently large \(m\), this
common refinement is defined in \(\mathscr K(G_m)\).  Since inflation
is fully faithful and
\(
 \mathcal P_m=\mathscr K(G_m)\cap\mathcal P
\),
it is already a common refinement in
\(\mathscr K(G_m)/\mathcal P_m\).  Hence
\[
 \operatorname*{colim}_{m\geq n}
 \operatorname{Hom}_{\mathscr K(G_m)/\mathcal P_m}(a_m,b_m)
 \xrightarrow{\ \sim\ }
 \operatorname{Hom}_{\mathscr K(G)/\mathcal P}(a,b).
\]

We next pass to idempotent completions. An object of
\(
 \bigl(\mathscr K(G)/\mathcal P\bigr)^\natural
\)
is represented by a pair \((a,e)\), where \(a\) belongs to
\(\mathscr K(G)/\mathcal P\) and
\(
 e:a\longrightarrow a\) satisfies
\(
 e^2=e.
\)
Choose \(m\) such that \(a\) is represented over \(G_m\). By the
surjectivity, there exist \(r\geq m\) and
\(
 e_r:a_r\longrightarrow a_r
\)
whose image is \(e\). Since \(e^2=e\), the injectivity gives \(s\geq r\) such that
\(
 e_s^2=e_s
\)
in \(\mathscr K(G_s)/\mathcal P_s\). Thus \((a,e)\) lies in the
essential image of
\[
 \bigl(\mathscr K(G_s)/\mathcal P_s\bigr)^\natural
 \longrightarrow
 \bigl(\mathscr K(G)/\mathcal P\bigr)^\natural.
\]
Moreover, for two retracts \((a,e)\) and \((b,f)\), their morphism
group is
\(
 f\,
 \operatorname{Hom}_{\mathscr K(G)/\mathcal P}(a,b)\,
 e.
\)
Consequently, the required Hom isomorphism holds
after idempotent completion.

The inflation functors are exact tensor functors and descend to the
compatible stalks. Since \(c_n\) is
compact,
\(
 \mathcal L_n^c
 =
 \operatorname{thick}^{\otimes}_{\mathscr T(G_n)_{x_n}^c}(c_n).
\)
On the other hand, Lemma~\ref{lem6.3} shows
\[
 \mathcal L_G(H;x)^c
 =
 \operatorname{thick}^{\otimes}_{\mathscr T(G)_x^c}
 \bigl(q_x(c_H)\bigr).
\]
Then the conclusion follows from
Lemma~\ref{lem6.6}.
\end{proof}

\begin{remark}\label{rem6.8}
Let the notation be as in  Proposition~\ref{prop6.7}. If \(G_n=G/U_n\), where \(U_n\leq N_0\) and
\(U_n\cap H\leq H^p\), and if \(H_n=HU_n/U_n\), then
\(
 \operatorname{supp}_{\mathscr T(G_n)_{x_n}}(c_n)
 =
 Z_{G_n}(H_n;x_n).
\)
Consequently,
\(
 \mathcal L_n=\mathcal L_{G_n}(H_n;x_n).
\)
\end{remark}

\begin{remark}
Proposition~\ref{prop6.7} asserts a filtered
\(2\)-colimit only for the compact triangulated categories.  It makes
no assertion that the big categories \(\mathcal L_n\) have
\(\mathcal L_G(H;x)\) as their filtered colimit.
\end{remark}

For a finite group \(G\), we write \(\operatorname{Max}(G)\) for the set of maximal proper subgroups of \(G\).

\begin{lemma}
\label{lem6.10}
Let \(Q\) be finite and \(H\leq Q\)  a \(p\)-subgroup, and let
\(x=\mathcal P_Q(H,\mathfrak q)\).  Then we have \[\mathrm{supp}(c_{Q,H})\cap \mathrm{gen}_{X_{Q}}(x)=Z_Q(H;x)\]
where \(
 c_{Q,H}:=
 \bigotimes_{M\in\operatorname{Max}(H)}\operatorname{kos}_Q(M).
\)
\end{lemma}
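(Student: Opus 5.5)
We argue exactly as in the second half of the proof of Lemma~\ref{lem6.3}, which becomes simpler here because \(Q\) is finite and no auxiliary subgroup \(N_0\) is needed: \(M\) itself is a subgroup of \(Q\), so \(\operatorname{kos}_Q(M)\in\mathscr K(Q)\) is defined. If \(H=1\), then \(\operatorname{Max}(H)=\varnothing\) and \(c_{Q,H}=\mathbb 1\). Its support is all of \(X_Q\). Every generalization of \(x=\mathcal P_Q(1,\mathfrak q)\) has subgroup parameter \(\leq_Q 1\), hence equal to \([1]_Q\), so both sides equal \(\operatorname{gen}_{X_Q}(x)\). We may therefore assume \(H\neq1\), in which case \(\operatorname{Max}(H)\) is a finite nonempty set.

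Take \(y=\mathcal P_Q(K,\mathfrak r)\in\operatorname{gen}_{X_Q}(x)\). By Recollection~\ref{rec2.6} (equivalently \cite[Corollary~3.17]{BalmerGallauerArtin}), \(K\leq_QH\). The support of a tensor product is the intersection of the supports, so Recollection~\ref{rec2.6} gives
\[
 y\in\operatorname{supp}(c_{Q,H})
 \Longleftrightarrow
 K\nleq_Q M\ \text{ for every } M\in\operatorname{Max}(H).
\]

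First suppose \([K]_Q=[H]_Q\). If \(K\leq_Q M\) for some \(M<H\), then \(|K|=|H|>|M|\), which is impossible since conjugation preserves order. Hence \(y\) lies in the support, and therefore \(y\in Z_Q(H;x)\) implies \(y\in\operatorname{supp}(c_{Q,H})\).

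Conversely, suppose \([K]_Q\neq[H]_Q\). After conjugating we may assume \(K\leq H\), and then necessarily \(K<H\). Every proper subgroup of the finite \(p\)-group \(H\) is contained in a maximal subgroup \(M\in\operatorname{Max}(H)\). Then \(K\leq M\), so \(y\notin\operatorname{supp}(\operatorname{kos}_Q(M))\) and hence \(y\notin\operatorname{supp}(c_{Q,H})\). Together the two cases give \(\operatorname{supp}(c_{Q,H})\cap\operatorname{gen}_{X_Q}(x)=\sigma_Q^{-1}([H]_Q)\cap\operatorname{gen}_{X_Q}(x)=Z_Q(H;x)\).

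No step is a real obstacle. The only points needing care are the correct direction of the Koszul support criterion in Recollection~\ref{rec2.6} and the order-counting argument that rules out a conjugate of \(H\) lying inside a proper subgroup of \(H\); for a finite group this replaces the compactness argument used in Lemma~\ref{lem6.3}.
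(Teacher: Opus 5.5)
Your proposal is correct and follows essentially the same route as the paper: you combine the Koszul support criterion from Recollection~\ref{rec2.6} with the facts that every proper subgroup of the finite \(p\)-group \(H\) lies in some \(M\in\operatorname{Max}(H)\), while \(H\) is not subconjugate to any such \(M\). Your separate treatment of \(H=1\) and the explicit order-counting justification are correct; they only spell out details the paper leaves implicit.
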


\begin{proof}
Let \(\xi=\mathcal P_Q(K,\mathfrak r)\) be a generalization of \(x\).
Then \(K\leq_QH\), so after conjugation we may assume \(K\leq H\).
The Koszul support formula gives
\[
 \xi\in\operatorname{supp}(c_{Q,H})
 \quad\Longleftrightarrow\quad
 K\nleq_QM
 \quad\text{for every }M\in\operatorname{Max}(H).
\]
Every proper subgroup of the finite \(p\)-group \(H\) lies in a
maximal proper subgroup, while \(H\) cannot be subconjugate to any such
subgroup.  Hence \(K\nleq_Q M\) for every
\(M\in\operatorname{Max}(H)\) if and only if
\([K]_Q=[H]_Q\).
\end{proof}

Let \(Q,H,x\) be as in
Lemma~\ref{lem6.10}, and let
\(N_Q(H)\leq L\leq Q\). Since
\(
 N_L(H)=N_Q(H),
\)
the corresponding Weyl groups agree:
\(
 N_L(H)/H=N_Q(H)/H.
\)
Thus we may regard
\(
 \mathfrak q\in\mathcal V_{N_Q(H)/H}
\)
as a point of
\(
 \mathcal V_{N_L(H)/H}
\),
and write
\[
\begin{aligned}
 x_L&:=\mathcal P_L(H,\mathfrak q),\\
 e_Q&:=e_Q(H;x),\\
 e_L&:=e_L(H;x_L),\\
 \rho_L&:=
 \operatorname{Spc}(\operatorname{Res}_L^Q):
 X_L\longrightarrow X_Q.
\end{aligned}
\]

\begin{proposition}
\label{prop6.11}
With the preceding notation, the restriction induces a coproduct-preserving exact tensor
functor
\(
 \overline{\operatorname{Res}}_L^Q:
 \mathscr T(Q)_x
 \longrightarrow
 \mathscr T(L)_{x_L}
\)
such that
\(
 \overline{\operatorname{Res}}_L^Q(e_Q)\simeq e_L.
\)
Moreover, \(
 \overline{\operatorname{Res}}_L^Q\big|_{\mathcal L_Q(H;x)}:\mathcal L_Q(H;x) \rightarrow
\mathcal L_L(H;x_L)
\) has  an exact coproduct-preserving right adjoint
\[
 I_{L,Q}^{H,x}
 :\mathcal L_L(H;x_L)\rightarrow \mathcal L_Q(H;x)
\]
such that
\[
 I_{L,Q}^{H,x}(Y)\simeq
 e_Q\otimes q_x
 \operatorname{Coind}_L^Q\bigl(j_{x_L}Y\bigr)
 \simeq
 e_Q\otimes q_x
 \operatorname{Ind}_L^Q\bigl(j_{x_L}Y\bigr).
\]
\end{proposition}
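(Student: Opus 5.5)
The plan is to build the functor on stalks by hand and then check, in turn, the unit statement, that the ideals are preserved, the existence of the right adjoint, and its formula.

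\textbf{Descending restriction to the stalks.} First I show that \(\operatorname{Res}_L^Q\) descends to the stalks. It suffices to prove that \(\operatorname{Res}_L^Q(\mathcal P_x)\subseteq\mathcal P_{x_L}\), which is the same as \(\rho_L(x_L)=x\). This holds by the standard description of \(\operatorname{Spc}(\operatorname{Res})\) on points: it sends \(\mathcal P_L(H,\mathfrak q)\) to \(\mathcal P_Q(H,\mathfrak q')\), where \(\mathfrak q'\) is the image of \(\mathfrak q\) under the map induced by \(N_L(H)/H\to N_Q(H)/H\). Since \(N_L(H)=N_Q(H)\), this map is the identity, so \(\mathfrak q'=\mathfrak q\). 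Because \(\operatorname{Res}\) preserves coproducts and tensors, it maps \(\operatorname{Loc}^{\otimes}(\mathcal P_x)\) into \(\operatorname{Loc}^{\otimes}(\mathcal P_{x_L})\). It therefore induces a coproduct-preserving tensor-exact functor \(\overline{\operatorname{Res}}_L^Q\) on the Verdier quotients.

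\textbf{The unit \(e_Q\) goes to \(e_L\).} Next I compare units. By Lemma~\ref{lem6.10}, \(\mathcal L_Q(H;x)=\operatorname{Loc}^{\otimes}(q_x c_{Q,H})\), and likewise for \(L\) with \(c_{L,H}\). The Mackey/double-coset formula for \(\operatorname{kos}\) gives \(\operatorname{Res}_L^Q\operatorname{kos}_Q(M)\simeq\bigotimes_{LgM}\operatorname{kos}_L(L\cap{}^gM)\). I then compute supports in \(\operatorname{gen}(x_L)\): by the Koszul formula, a point \(\mathcal P_L(K,\mathfrak r)\) with \(K\leq_L H\) lies in \(\operatorname{supp}\operatorname{Res}(c_{Q,H})\) iff \(K\nleq_Q M\) for all \(M\in\operatorname{Max}(H)\), iff \([K]_Q=[H]_Q\). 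Since \(K\leq_L H\) and \(H\) is \(Q\)-conjugate to \(K\), the orders agree, so \([K]_L=[H]_L\). Hence
\[
\operatorname{supp}\bigl(\overline{\operatorname{Res}}(q_xc_{Q,H})\bigr)=Z_L(H;x_L).
\]
By Lemma~\ref{lem6.4} this gives \(\overline{\operatorname{Res}}(\mathcal L_Q(H;x))\subseteq\mathcal L_L(H;x_L)\). The idempotent triangle \(e_Q\to \mathbb 1\to f_Q\) is sent to an idempotent triangle whose left term generates the smashing ideal \(\operatorname{Loc}^{\otimes}(\overline{\operatorname{Res}}\,q_xc_{Q,H})=\mathcal L_L(H;x_L)\). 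By uniqueness of idempotent triangles, \(\overline{\operatorname{Res}}(e_Q)\simeq e_L\).

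\textbf{Right adjoint and its formula.} A coproduct-preserving exact functor between compactly generated categories has a right adjoint by Brown representability, and here it preserves compacts, so that right adjoint preserves coproducts. I will exhibit it explicitly. For \(Y\in\mathcal L_L\) and \(X\in\mathcal L_Q\), I use \(X\simeq e_Q\otimes X\), the adjunction \(q_x\dashv j_x\), the adjunction \(q_{x_L}\dashv j_{x_L}\) together with \(q_{x_L}\operatorname{Res}=\overline{\operatorname{Res}}\,q_x\), then \(\operatorname{Res}\dashv\operatorname{Coind}\), and finally that \(e_Q\otimes-\) is right adjoint to the inclusion of the smashing ideal, being a colocalization. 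This chains to
\[
\operatorname{Hom}(\overline{\operatorname{Res}}X,Y)\simeq\operatorname{Hom}(X,\,e_Q\otimes q_x\operatorname{Coind}_L^Q j_{x_L}Y).
\]
Ind\(\simeq\)Coind holds for finite index in \(\mathscr T(Q)\). Exactness and coproduct preservation follow because each constituent has them: \(j_{x_L}\) preserves coproducts since the localization is finite and smashing.

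\textbf{Main obstacle.} I expect the support comparison to be the hardest step, namely checking that restriction carries \(Z_Q(H;x)\) exactly onto \(Z_L(H;x_L)\) without extra \(L\)-classes appearing. This relies on the hypothesis \(N_Q(H)\leq L\) only through the identification of points, and I would verify it carefully with the double coset formula.
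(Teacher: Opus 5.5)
Your route is essentially the paper's. You descend $\operatorname{Res}_L^Q$ via $\rho_L(x_L)=x$. You identify $\operatorname{supp}(\operatorname{Res}_L^Q c_{Q,H})\cap\operatorname{gen}_{X_L}(x_L)$ with $Z_L(H;x_L)$, which is precisely the paper's claim $\rho_L^{-1}(Z_Q(H;x))\cap\operatorname{gen}_{X_L}(x_L)=Z_L(H;x_L)$. Your uniqueness-of-idempotent-triangles argument is the content of the Balmer--Favi base-change theorem that the paper cites. Finally, you build the right adjoint by a chain of adjunctions. The support comparison you flag as the main obstacle is routine.

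The one genuine omission is in that adjunction chain. After $X\simeq q_xj_xX$, $\overline{\operatorname{Res}}_L^Q q_x\simeq q_{x_L}\operatorname{Res}_L^Q$, $q_{x_L}\dashv j_{x_L}$ and $\operatorname{Res}\dashv\operatorname{Coind}$, you arrive at $\operatorname{Hom}_{\mathscr T(Q)}(j_xX,\operatorname{Coind}_L^Q j_{x_L}Y)$. The next step, passing to $\operatorname{Hom}_{\mathscr T(Q)_x}(X,q_x\operatorname{Coind}_L^Q j_{x_L}Y)$, is not supplied by $q_x\dashv j_x$, because that adjunction points the wrong way. The identification $\operatorname{Hom}(j_xX,Z)\simeq\operatorname{Hom}(X,q_xZ)$ holds only when $Z$ is $q_x$-local. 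It fails in general even for smashing localizations: $\operatorname{Hom}_{D(\mathbb Z)}(\mathbb Q,\Sigma\mathbb Z)\neq 0$, while $\operatorname{Hom}_{D(\mathbb Q)}(\mathbb Q,\Sigma\mathbb Q)=0$.

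So you must check that $\operatorname{Coind}_L^Q(j_{x_L}Y)$ is local, which is exactly the extra step in the paper's proof. For $A\in\operatorname{Loc}^{\otimes}(\mathcal P_x)$,
\[
\operatorname{Hom}(A,\operatorname{Coind}_L^Q j_{x_L}Y)\simeq\operatorname{Hom}(\operatorname{Res}_L^QA,\,j_{x_L}Y)=0.
\]
This vanishes because you already showed $\operatorname{Res}_L^Q(\operatorname{Loc}^{\otimes}(\mathcal P_x))\subseteq\operatorname{Loc}^{\otimes}(\mathcal P_{x_L})$, and $j_{x_L}Y$ is local. Alternatively, use the Brown-representability right adjoint $\overline{\operatorname{Res}}^{\,R}$ you already invoked. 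Uniqueness of right adjoints applied to $\overline{\operatorname{Res}}_L^Q\circ q_x\simeq q_{x_L}\circ\operatorname{Res}_L^Q$ gives $j_x\circ\overline{\operatorname{Res}}^{\,R}\simeq\operatorname{Coind}_L^Q\circ j_{x_L}$. This yields the locality and the formula $\overline{\operatorname{Res}}^{\,R}\simeq q_x\operatorname{Coind}_L^Q j_{x_L}$ at once, and composing with $e_Q\otimes-$ finishes the argument.
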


\begin{proof}
We first claim that \(
 \rho_L^{-1}\bigl(Z_Q(H;x)\bigr)
 \cap\operatorname{gen}_{X_L}(x_L)
 =
 Z_L(H;x_L).
\)
By \cite[Remark~7.6(b)]{BalmerGallauerGeometry} and
\(N_L(H)/H=N_Q(H)/H\), we have the equality  \(\rho_L(x_L)=x\). Let
\(
 z=\mathcal P_L(K,\mathfrak r)
 \in\operatorname{gen}_{X_L}(x_L).
\)
By \cite[Corollary~3.17]{BalmerGallauerArtin}, one has
\(K\leq_LH\). If \(\rho_L(z)\in Z_Q(H;x)\), then
\([K]_Q=[H]_Q\). After replacing \(K\) by an \(L\)-conjugate, assume
\(K\leq H\). Since \(K\) and \(H\) are \(Q\)-conjugate, they have the
same order, and hence \(K=H\). Thus \(z\in Z_L(H;x_L)\). Conversely, let \(z\in Z_L(H;x_L)\). After \(L\)-conjugating, we can write
\(
 z=\mathcal P_L(H,\mathfrak r).
\)
Then by
\cite[Remark~7.6(b)]{BalmerGallauerGeometry}
we have
\(
 \rho_L(z)
 =
 \mathcal P_Q\bigl(H,\bar\rho_{L,H}(\mathfrak r)\bigr).
\)
Hence \(\rho_L(z)\) has subgroup parameter \([H]_Q\). Since \(\rho_L\) is continuous and \(\rho_L(x_L)=x\), it follows that
\(
 x\in\overline{\{\rho_L(z)\}}.
\)
Therefore we have
\(
 \rho_L(z)\in Z_Q(H;x)
\), which ends the claim.

Put \(F=\operatorname{Res}_L^Q\). The equality
\(\rho_L(x_L)=x\) says
\(
 (F^c)^{-1}(\mathcal P_{x_L})=\mathcal P_x.
\)
It follows that
\(
 F\bigl(\operatorname{Loc}^{\otimes}(\mathcal P_x)\bigr)
 \subseteq
 \operatorname{Loc}^{\otimes}(\mathcal P_{x_L}),
\)
so the Verdier-quotient universal property gives
\[
 \overline{\operatorname{Res}}_L^Q:
 \mathscr T(Q)_x\longrightarrow\mathscr T(L)_{x_L}.
\]
Then \(
 \overline{\operatorname{Res}}_L^Q(e_Q)\simeq e_L
\) follows from \cite[Theorem~6.3]{BalmerFavi} and the claim above, and so the restriction maps \(\mathcal L_Q(H;x)\) into
\(\mathcal L_L(H;x_L)\).

Let \(U=\operatorname{Coind}_L^Q\), the right adjoint of \(F\). For
\(A\in\ker(q_x)\) and \(Y\in\mathscr T(L)_{x_L}\), one has
\[
 \begin{aligned}
 \operatorname{Hom}_{\mathscr T(Q)}
 \bigl(A,U(j_{x_L}Y)\bigr)
 &\simeq
 \operatorname{Hom}_{\mathscr T(L)}
 \bigl(F(A),j_{x_L}Y\bigr)\\
 &\simeq
 \operatorname{Hom}_{\mathscr T(L)_{x_L}}
 \bigl(q_{x_L}F(A),Y\bigr)
 =0.
 \end{aligned}
\]
Hence \(U(j_{x_L}Y)\) is \(q_x\)-local, and
\(j_xq_xU(j_{x_L}Y)\simeq U(j_{x_L}Y)\). For \(T\in\mathscr T(Q)_x\) and
\(Y\in\mathscr T(L)_{x_L}\), there are natural isomorphisms
\[
\begin{aligned}
\operatorname{Hom}_{\mathscr T(L)_{x_L}}
 \bigl(\overline{\operatorname{Res}}_L^Q(T),Y\bigr)
&\simeq
 \operatorname{Hom}_{\mathscr T(L)_{x_L}}
 \bigl(q_{x_L}F(j_xT),Y\bigr)\\
&\simeq
 \operatorname{Hom}_{\mathscr T(L)}
 \bigl(F(j_xT),j_{x_L}Y\bigr)\\
&\simeq
 \operatorname{Hom}_{\mathscr T(Q)}
 \bigl(j_xT,U(j_{x_L}Y)\bigr)\\
&\simeq
 \operatorname{Hom}_{\mathscr T(Q)}
 \bigl(j_xT,j_xq_xU(j_{x_L}Y)\bigr)\\
&\simeq
 \operatorname{Hom}_{\mathscr T(Q)_x}
 \bigl(T,q_xU(j_{x_L}Y)\bigr).
\end{aligned}
\]
 Hence
\(
 \overline{\operatorname{Res}}_L^Q
 \dashv
 q_xUj_{x_L}.
\)
Finally, \(e_Q\otimes-\) is right adjoint to the inclusion of
\(\mathcal L_Q(H;x)\) into \(\mathscr T(Q)_x\). This proves
\[
 \overline{\operatorname{Res}}_L^Q\big|_{\mathcal L_Q(H;x)}
 \dashv I_{L,Q}^{H,x}:=e_Q\otimes q_xUj_{x_L}.
\]
The localization \(q_{x_L}\) is generated by compact objects, so
\(j_{x_L}\) preserves coproducts. Since \(Q/L\) is finite,
coinduction is exact and preserves coproducts. Hence
\(I_{L,Q}^{H,x}\) has the same properties.
\end{proof}

\begin{definition}
\label{def6.12}
The functor
\[
 \begin{aligned}
 R_{Q,L}^{H,x}:
 \mathcal L_Q(H;x)&\longrightarrow\mathcal L_L(H;x_L),
 \\
 X&\longmapsto
 e_L\otimes\overline{\operatorname{Res}}_L^Q(X)
 \end{aligned}
\]
is called \emph{selected restriction}. Its right adjoint
\(I_{L,Q}^{H,x}\),  is called
\emph{selected coinduction}. When \(L=N_Q(H)\), we abbreviate these
functors to \(R\) and \(I\).
\end{definition}

\begin{proposition}
\label{prop6.13}
Let \(Q,H,x,L\) be as in Definition
\ref{def6.12}. Then
\[
 R_{Q,L}^{H,x}:
 \mathcal L_Q(H;x)
 \xrightarrow{\ \sim\ }
 \mathcal L_L(H;x_L)
\]
is a tensor-triangulated equivalence, with quasi-inverse
\(I_{L,Q}^{H,x}\).
\end{proposition}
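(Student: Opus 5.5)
The plan is to check that the adjunction $R:=R^{H,x}_{Q,L}\dashv I:=I^{H,x}_{L,Q}$ from Proposition~\ref{prop6.11} has invertible unit and counit. Since both functors are exact and coproduct-preserving, and both categories are compactly generated, it suffices to check this on compact generators. By Lemma~\ref{lem6.10}, these generators are $q_x(c_{Q,H})\otimes q_x(k(Q/K))$ on the $Q$ side. On the $L$ side they are $q_{x_L}(c_{L,H})\otimes q_{x_L}(k(L/K'))$.

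The first step is to show that $R$ is a tensor functor which is fully faithful on compact objects. I would compute the Mackey formula
\[
\operatorname{Res}^Q_L\operatorname{Ind}^Q_L(Y)\simeq\bigoplus_{LgL}\operatorname{Ind}^L_{L\cap {}^gL}\operatorname{Res}^{{}^gL}_{L\cap{}^gL}(c_g^*Y).
\]
The claim is that after applying $e_L\otimes q_{x_L}(-)$ only the identity double coset survives. The reason is support. For $g\notin L$, the summand is supported at points $\mathcal P_L(K,\mathfrak r)$ with $K\leq_L L\cap{}^gL$. If such a point lies in $Z_L(H;x_L)$, then $K$ is $L$-conjugate to $H$. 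So some $L$-conjugate ${}^{\ell}H$ would lie in ${}^gL$, and hence ${}^{g^{-1}\ell}H\leq L$.

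To turn this into a contradiction, I would use the stalk at $x_L$, where the only relevant subgroup is $H$ up to $L$-conjugacy together with the point $\mathfrak q$. If ${}^{h}H\leq L$ with $h\notin L$, then the point $\mathcal P_L({}^hH,\ldots)$ maps under $\rho_L$ to a point with parameter $[H]_Q$. Such a point is not in $\operatorname{gen}(x_L)$ unless ${}^hH$ is $L$-conjugate to $H$. In that case $\ell'h\in N_Q(H)\leq L$, so $h\in L$, a contradiction. The precise support computation uses the claim $\rho_L^{-1}(Z_Q)\cap\operatorname{gen}(x_L)=Z_L$ proved in Proposition~\ref{prop6.11}, together with \cite[Remark~7.6(b)]{BalmerGallauerGeometry}. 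From this, the counit $RI(Y)\to Y$ is an isomorphism.

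The second step is to show the unit $X\to IR(X)$ is an isomorphism. Equivalently, $I$ is conservative on the essential image, or $R$ is conservative on $\mathcal L_Q(H;x)$. For conservativity, I would use that $\mathcal L_Q(H;x)$ is generated by objects supported on $Z_Q(H;x)$. Every point of $Z_Q(H;x)$ is in the image of $\rho_L$ restricted to $Z_L(H;x_L)$. This follows from surjectivity onto points with parameter $[H]_Q$: the Weyl groups agree, so $\mathcal V_{N_L(H)/H}=\mathcal V_{N_Q(H)/H}$.

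Finite-group stratification of $\mathscr T(Q)$ \cite{BalmerGallauerGeometry} gives detection of vanishing by the Balmer--Favi supports. The support of $R(X)$ is $\rho_L^{-1}(\operatorname{supp}X)\cap Z_L$, by \cite[Theorem~6.3]{BalmerFavi}-type compatibility for the finite \'etale restriction. Since $\rho_L$ is surjective onto $Z_Q$, we get $R(X)=0\Rightarrow X=0$. Combined with the invertible counit and the triangle identities, this makes the unit invertible: the cone of the unit is killed by $R$. Tensor compatibility is automatic, because $R$ is monoidal with $R(e_Q)\simeq e_L$.

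The main obstacle is the first step: pinning down that no point of $\operatorname{gen}(x_L)$ with parameter $[H]_L$ arises from ${}^gH\leq L$ with $g\notin L$. This requires verifying that the relevant Koszul objects $\operatorname{kos}_L$ kill those summands after stalk localization. Here I would work with the explicit formula $\operatorname{supp}(\operatorname{Ind}^L_{J}(-))\subseteq\{K\leq_L J\}$ and the fact that $N_Q(H)\leq L$ forces distinct $L$-classes of $Q$-conjugates of $H$ to give points over distinct preimages, only one of which is generalized by $x_L$.
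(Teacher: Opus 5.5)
Your overall architecture matches the paper's: the Mackey formula inverts the counit of $R\dashv I$, then conservativity of $R$ follows from finite-group stratification and $Z_Q(H;x)\subseteq\rho_L(Z_L(H;x_L))$, then the triangle identity. But the step you flag as the main obstacle is not proved by your argument. That step is the vanishing of the non-identity double-coset summands $S_g:=\operatorname{Ind}^L_{K_g}\operatorname{Res}^{{}^gL}_{K_g}({}^gj_{x_L}Y)$, $K_g:=L\cap{}^gL$, after applying $e_L\otimes q_{x_L}(-)$.

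You only use that $S_g$ is induced from $K_g$, so its support has parameters $L$-subconjugate to $K_g$. That constraint does not exclude $[H]_L$; if $L\trianglelefteq Q$ it is vacuous, since $K_g=L$. Concretely, take $Q=D_8$, $p=2$, $H=\langle s\rangle$ noncentral of order $2$, $L=N_Q(H)=\{1,s,r^2,sr^2\}$ and $g=r$; then $H\le K_g=L$. What you deduce, ${}^hH\le L$ with $h\notin L$, is not contradictory: here ${}^{r^{-1}}H=\langle sr^2\rangle\le L$. Your next sentence reasons about the point $\mathcal P_L({}^hH,\ldots)$, which you never showed lies in $\operatorname{supp}(S_g)$ or in $\operatorname{gen}(x_L)$. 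So no contradiction is reached. The bound $\operatorname{supp}(\operatorname{Ind}^L_J(-))\subseteq\{K\le_L J\}$ from your last paragraph cannot supply one either.

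The missing input is the support of the \emph{inner} object ${}^g(j_{x_L}Y)$. Put $\widetilde e_L:=j_{x_L}(e_L)$, which is supported on $Z_L(H;x_L)$. Since $j_{x_L}Y\simeq\widetilde e_L\otimes j_{x_L}Y$, the projection formula gives
$\widetilde e_L\otimes S_g\simeq\operatorname{Ind}^L_{K_g}\bigl(\operatorname{Res}^L_{K_g}\widetilde e_L\otimes\operatorname{Res}^{{}^gL}_{K_g}({}^g\widetilde e_L)\otimes\operatorname{Res}^{{}^gL}_{K_g}({}^gj_{x_L}Y)\bigr)$.
A point of $X_{K_g}$ in the support of both restricted idempotents has a parameter $J\le K_g$ with $J\sim_L H$ and $J\sim_{{}^gL}{}^gH$. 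That is, $J={}^{\ell_1}H={}^{g\ell_2}H$ with $\ell_1,\ell_2\in L$. Hence $\ell_1^{-1}g\ell_2\in N_Q(H)\le L$, so $g\in L$. The two supports are therefore disjoint, and the tensor product vanishes by finite-group stratification.

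This is exactly the paper's argument. The paper runs it for $L=N_Q(H)$, where it reads $H=J={}^gH$. It then reaches general $L$ via $R^{H,x_L}_{L,N}\circ R^{H,x}_{Q,L}\simeq R^{H,x}_{Q,N}$. With the fix, your direct treatment of general $L$ works equally well.

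A smaller point concerns the conservativity step. For big $X$, the inclusion $\rho_L^{-1}(\operatorname{Supp}X)\cap Z_L(H;x_L)\subseteq\operatorname{Supp}(R(X))$ does not follow from Balmer--Favi base change alone. It needs minimality at $\xi\in Z_Q(H;x)\cap\operatorname{Supp}(X)$ in $\mathscr T(Q)$. The paper uses it explicitly: $g_\xi\in\operatorname{Loc}^{\otimes}(X)$, hence $0\neq R(g_\xi)\simeq g_{\xi_L}\in\operatorname{Loc}^{\otimes}(R(X))$.
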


\begin{proof}
We first take \(L=N=N_Q(H)\). By
Proposition~\ref{prop6.11}, there is an
adjunction
\(
 R\dashv I.
\)
Put
\(
 \widetilde e_N:=j_{x_N}(e_N)\in\mathscr T(N).
\)
The fully faithful embedding of the smashing-local subcategory is
monoidal on local objects. Hence, for
\(Y\in\mathcal L_N(H;x_N)\), we have
\(
 j_{x_N}(Y)
 \simeq
 \widetilde e_N\otimes j_{x_N}(Y).
\)
It is routine to check that the Balmer-Favi support
of \(\widetilde e_N\) in \(X_N\) is exactly the subset
\(Z_N(H;x_N)\subseteq\operatorname{gen}_{X_N}(x_N)\).

Using \(R(e_Q)\simeq e_N\), the composite of the adjoint functors is
given by
\[
 RI(Y)\simeq
 e_N\otimes q_{x_N}\!\left(
   \operatorname{Res}_N^Q
   \operatorname{Coind}_N^Q(j_{x_N}Y)
 \right).
\]

For \(g\in Q\), put \(K_g=N\cap{}^gN\). Using
\(\operatorname{Coind}_N^Q\simeq\operatorname{Ind}_N^Q\), the Mackey
formula expresses
\[
 \operatorname{Res}_N^Q
 \operatorname{Coind}_N^Q\bigl(j_{x_N}Y\bigr)
\]
as a finite direct sum indexed by \(N\backslash Q/N\), whose
\(g\)-summand is
\(
 \operatorname{Ind}_{K_g}^N
 \operatorname{Res}_{K_g}^{{}^gN}
 \bigl({}^gj_{x_N}Y\bigr).
\)
The identity-double-coset summand is \(j_{x_N}Y\).

Suppose that \(g\notin N\). Since \(
 j_{x_N}(Y)
 \simeq
 \widetilde e_N\otimes j_{x_N}(Y)
\), we have
\[\operatorname{Ind}_{K_g}^N
 \operatorname{Res}_{K_g}^{{}^gN}
 \bigl({}^gj_{x_N}Y\bigr)\otimes \widetilde e_N\simeq
 \operatorname{Ind}_{K_g}^N\!\left(
  \operatorname{Res}_{K_g}^N(\widetilde e_N)
  \otimes
  \operatorname{Res}_{K_g}^{{}^gN}({}^g\widetilde e_N)
  \otimes
  \operatorname{Res}_{K_g}^{{}^gN}({}^gj_{x_N}Y)
 \right).
\]
Let
\(
 \rho_1:X_{K_g}\longrightarrow X_N\)
and \(
 \rho_2:X_{K_g}\longrightarrow X_{{}^gN}
\)
be the spectral maps induced by
\(\operatorname{Res}_{K_g}^{N}\) and
\(\operatorname{Res}_{K_g}^{{}^gN}\), respectively.
For the Balmer-Favi support, we have
\[
 \begin{aligned}
 \operatorname{Supp}_{X_{K_g}}
 \bigl(\operatorname{Res}_{K_g}^{N}(\widetilde e_N)\bigr)
 &=
 \rho_1^{-1}\bigl(Z_N(H;x_N)\bigr),\\
 \operatorname{Supp}_{X_{K_g}}
 \bigl(\operatorname{Res}_{K_g}^{{}^gN}
 ({}^g\widetilde e_N)\bigr)
 &=
 \rho_2^{-1}\bigl({}^gZ_N(H;x_N)\bigr).
 \end{aligned}
\]
 These inverse images are disjoint. Indeed, a point
in both supports would have a subgroup parameter represented by a
subgroup \(J\leq K_g\) that is \(N\)-conjugate to \(H\) and
\({}^gN\)-conjugate to \({}^gH\). Since \(N=N_Q(H)\), every
\(N\)-conjugate of \(H\) is \(H\), and every \({}^gN\)-conjugate of
\({}^gH\) is \({}^gH\). Thus \(H=J={}^gH\), which would imply
\(g\in N_Q(H)=N\), a contradiction. The finite-group categories are
stratified by \cite[Theorem~9.11]{BalmerGallauerGeometry}, so empty
support detects the zero object. Therefore \(\operatorname{Res}_{K_g}^N(\widetilde e_N)
  \otimes
  \operatorname{Res}_{K_g}^{{}^gN}({}^g\widetilde e_N)\) is zero, and so every nonidentity double-coset summand is annihilated by
\(\widetilde e_N\). It follows that the counit of \(R\dashv I\),
\(
 \epsilon:RI\xrightarrow{\ \sim\ }\operatorname{id},
\)
is an isomorphism.

The functor \(R\) is conservative. Indeed, if
\(0\neq X\in\mathcal L_Q(H;x)\), finite-group stratification supplies
a point
\[
 \xi=\mathcal P_Q(H,\mathfrak r)\in Z_Q(H;x)
\]
such that \(g_\xi\otimes X\neq0\), and minimality of the localizing ideal
\(
 \Gamma_\xi\bigl(\mathscr T(Q)_x\bigr)=
 \operatorname{Loc}^{\otimes}_{\mathscr T(Q)_x}(g_\xi)
\) implies
\(g_\xi\in\operatorname{Loc}^{\otimes}(X)\). Put
\(
 \xi_N=\mathcal P_N(H,\mathfrak r).
\)
Since \(N=N_Q(H)\), the Weyl groups of \(H\) in \(N\) and \(Q\)
are both \(N/H\). Then
\(
 \rho_N^{-1}(\{\xi\})\cap Z_N(H;x_N)=\{\xi_N\}.
\)
Therefore, we have
\[
 \begin{aligned}
 R(g_\xi)
 &=
 e_N\otimes\overline{\operatorname{Res}}_N^Q(g_\xi)\\
 &\simeq
 e_N\otimes g_{\rho_N^{-1}(\{\xi\})}\\
 &\simeq
 g_{\rho_N^{-1}(\{\xi\})\cap Z_N(H;x_N)}\\
 &\simeq g_{\xi_N}\\
 &\neq 0.
 \end{aligned}
\]
If \(R(X)=0\), applying \(R\) to
\(g_\xi\in\operatorname{Loc}^{\otimes}(X)\) would give
\(R(g_\xi)=0\), a contradiction. Thus \(R\) is conservative.

Let
\(
 \eta:\operatorname{id}\longrightarrow IR
\)
be the unit. Since \(\epsilon\) is an isomorphism, the triangle
identity shows that \(R(\eta)\) is an isomorphism. Conservativity of
\(R\) implies that \(\eta\) is an isomorphism. Thus \(R\) and \(I\)
are quasi-inverse equivalences.

For a general \(L\) containing \(N_Q(H)\), set \(N=N_Q(H)\). Since \(N\leq L\), one has
\(N_L(H)=N\). Transitivity of restriction, together with
idempotent base change, gives a natural isomorphism
\[
 R_{L,N}^{H,x_L}\circ R_{Q,L}^{H,x}
 \xrightarrow{\ \sim\ }
 R_{Q,N}^{H,x}.
\]
The normalizer case, applied to the ambient groups \(Q\) and \(L\),
shows that both \(R_{Q,N}^{H,x}\) and
\(R_{L,N}^{H,x_L}\) are equivalences. Hence
\(
 R_{Q,L}^{H,x}
 \simeq
 \bigl(R_{L,N}^{H,x_L}\bigr)^{-1}
 \circ R_{Q,N}^{H,x}
\)
is an equivalence.
\end{proof}

\begin{lemma}\label{lem6.14}
Let \(G\) be profinite and let
\(x=\mathcal P_G(H,\mathfrak q)\), where \(H\) is topologically finitely generated. Suppose \(L\leq_oG\) contains
\(N_G(H)\), and  put
\(
 x_L:=\mathcal P_L(H,\mathfrak q).
\)
Then the restriction induces a coproduct-preserving exact tensor
functor
\(
 \overline{\operatorname{Res}}_L^G:
 \mathscr T(G)_x
 \longrightarrow
 \mathscr T(L)_{x_L}
\)
such that
\(
 \overline{\operatorname{Res}}_L^G(e_G(H;x))\simeq e_L(H;x_L).
\)
Moreover, \(
 \overline{\operatorname{Res}}_L^G\big|_{\mathcal L_G(H;x)}:\mathcal L_G(H;x) \rightarrow
\mathcal L_L(H;x_L)
\) has  an exact coproduct-preserving right adjoint
\[
 \iota_{L,G}^{H,x}
 :\mathcal L_L(H;x_L)\rightarrow \mathcal L_G(H;x)
\]
such that
\[
 \iota_{L,G}^{H,x}(Y)\simeq
 e_G(H;x)\otimes q_x
 \operatorname{Coind}_L^G\bigl(j_{x_L}Y\bigr).
\]
\end{lemma}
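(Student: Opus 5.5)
The plan is to transcribe the proof of Proposition~\ref{prop6.11} to the profinite setting, since every step there was formal except the identification of strata. The first step is the profinite version of the claim
\[
 \rho_L^{-1}\bigl(Z_G(H;x)\bigr)\cap\operatorname{gen}_{X_L}(x_L)=Z_L(H;x_L),
\]
where \(\rho_L=\operatorname{Spc}(\operatorname{Res}^G_L)\).

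We first check that \(\rho_L(x_L)=x\). Restriction along \(L\leq_o G\) sends \(\mathcal P_L(H,\mathfrak q)\) to \(\mathcal P_G(H,\bar\rho(\mathfrak q))\). Because \(N_G(H)\leq L\), the Weyl groups agree, \(N_L(H)/H=N_G(H)/H\), and the map on \(\mathcal V\) is the identity. This can be checked at finite stages \(G/U\) with \(U\leq L\) normal open, using \cite[Remark~7.6(b)]{BalmerGallauerGeometry}, and then passing to the limit via \cite[Proposition~3.1]{BalmerGallauerArtin}.

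For a generalization \(z=\mathcal P_L(K,\mathfrak r)\) of \(x_L\), \cite[Corollary~3.17]{BalmerGallauerArtin} gives \(K\leq_L H\); after conjugating we may assume \(K\leq H\). If \(\rho_L(z)\) has parameter \([H]_G\), then \(K=H^g\leq H\) for some \(g\in G\). This is the one genuinely non-finite step, because order counting is unavailable. Instead we use that \(H\) is topologically finitely generated pro-\(p\): a closed subgroup \(H^g\leq H\) isomorphic to \(H\) has the same number of open subgroups of each index (finitely many), so the chain argument forces \(H^g=H\). Concretely, if \(H^g<H\) then \(H^g\leq M\) for some \(M\in\operatorname{Max}_o(H)\), and iterating gives a strictly descending chain \(H>H^g>H^{g^2}>\cdots\). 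This contradicts the fact that \([H:H^{g}\Phi(H)]\) must equal \(1\), since \(H^g\) maps onto \(H/\Phi(H)\) by counting the finite Frattini quotients. Hence \(K=H\) and \(z\in Z_L(H;x_L)\).

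Conversely, if \(z=\mathcal P_L(H,\mathfrak r)\), then \(\rho_L(z)\) has parameter \([H]_G\) and specializes to \(x\) by continuity, so \(\rho_L(z)\in Z_G(H;x)\).

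The rest is formal and copies the earlier proof. From \(\rho_L(x_L)=x\) we get \((\operatorname{Res}^c)^{-1}(\mathcal P_{x_L})=\mathcal P_x\), so restriction maps \(\operatorname{Loc}^\otimes(\mathcal P_x)\) into \(\operatorname{Loc}^\otimes(\mathcal P_{x_L})\) and descends to \(\overline{\operatorname{Res}}^G_L\). This functor is coproduct preserving and tensor, and both localizations are finite.

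By Lemma~\ref{lem6.3}, \(Z_G(H;x)\) and \(Z_L(H;x_L)\) are supports of compact objects \(q_x(c_H)\) and \(q_{x_L}(c'_H)\). The idempotent \(e_G(H;x)\) is then the finite-localization idempotent \(\Gamma_{Z}\mathbb 1\), and tensor functors carry \(\Gamma_{Z}\mathbb 1\) to \(\Gamma_{\rho^{-1}Z}\mathbb 1\) (\cite[Theorem~6.3]{BalmerFavi}). Combined with the claim, this gives \(\overline{\operatorname{Res}}^G_L(e_G(H;x))\simeq e_L(H;x_L)\), so restriction maps \(\mathcal L_G(H;x)\) into \(\mathcal L_L(H;x_L)\).

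For the adjoint, let \(U=\operatorname{Coind}^G_L\). For \(A\in\ker q_x\), adjunction gives
\[
 \operatorname{Hom}(A,Uj_{x_L}Y)\simeq\operatorname{Hom}(q_{x_L}\operatorname{Res}A,Y)=0,
\]
so \(Uj_{x_L}Y\) is \(q_x\)-local. The same chain of Hom isomorphisms as in Proposition~\ref{prop6.11} then yields \(\overline{\operatorname{Res}}\dashv q_xUj_{x_L}\). Composing with the right adjoint \(e_G(H;x)\otimes-\) of the inclusion \(\mathcal L_G(H;x)\subseteq\mathscr T(G)_x\) gives \(\iota^{H,x}_{L,G}\).

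Exactness of \(\iota^{H,x}_{L,G}\) is clear. For coproduct preservation, \(j_{x_L}\) preserves coproducts because \(q_{x_L}\) is a finite (compactly generated) localization. Coinduction from an open subgroup preserves coproducts because \(\operatorname{Res}^G_L\) preserves compacts, as it sends \(k(G/K)\) to a finite sum of \(k(L/K')\) by the Mackey formula for the finite double coset space \(L\backslash G/K\). Tensoring with \(e_G(H;x)\) also preserves coproducts.

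The main obstacle is the step \(H^g\leq H\Rightarrow H^g=H\). If the Frattini-counting argument proves awkward, the fallback is to intersect with a finite stage \(G/U\) where \(U\cap H\leq\Phi(H)\), argue there by orders, and lift using the fact that \(H^g\Phi(H)=H\) forces \(H^g=H\).
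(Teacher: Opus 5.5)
Your route is the paper's. Its proof of this lemma is only the remark that \(Z_G(H;x)\) is Thomason by Lemma~\ref{lem6.3} and that the argument of Proposition~\ref{prop6.11} carries over. Your transcription is fine: the descent of restriction to the stalks, the base change of \(e_G(H;x)\) via Balmer--Favi, the Hom-chain giving \(\overline{\operatorname{Res}}_L^G\dashv q_x\operatorname{Coind}_L^G j_{x_L}\), and the coproduct checks. You also correctly isolate the one step that is not formal. In Proposition~\ref{prop6.11}, the passage from \(K=H^g\leq H\) to \(K=H\) used orders.

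Your primary argument for that step does not work, however. It only uses that \(H^g\) is abstractly isomorphic to \(H\), and no invariant of the isomorphism type can exclude a proper closed subgroup isomorphic to \(H\). For example, \(p\mathbb Z_p<\mathbb Z_p\) has the same number of open subgroups of each index and the same Frattini quotient, and the chain \(\mathbb Z_p>p\mathbb Z_p>p^2\mathbb Z_p>\cdots\) is harmless. Moreover, the image of \(p\mathbb Z_p\) in \(\mathbb Z_p/\Phi(\mathbb Z_p)\) is trivial. In general the image of \(H^g\) in \(H/\Phi(H)\) is \(H^g/(H^g\cap\Phi(H))\). Counting \(|H^g/\Phi(H^g)|\) says nothing about it unless you already know \(H^g\cap\Phi(H)=\Phi(H^g)\).

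The missing input is that the isomorphism is conjugation inside the profinite group \(G\). That is exactly what your fallback uses, and the fallback is correct. It can be made simpler, and it needs no finite generation. For every \(U\trianglelefteq_oG\), the subgroups \(H^gU/U\leq HU/U\) of the finite group \(G/U\) are conjugate, hence of equal order and therefore equal, so \(H^gU=HU\). Intersecting over all \(U\) and using that \(H\) and \(H^g\) are closed gives \(H^g=H\). The same device appears in the proof of Theorem~\ref{thm6.16}. With this in place of the Frattini-counting argument, your proof is complete.
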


\begin{proof}
 Note that \(Z_G(H;x)\) is a Thomason subset by Lemma~\ref{lem6.3}. The proof is similar to that of Proposition~\ref{prop6.11}.
\end{proof}

\begin{definition}
\label{def6.15}
Under the hypotheses of
Lemma~\ref{lem6.14}, define
\emph{selected restriction} by
\[
 \rho_{G,L}^{H,x}(X):=
 e_L(H;x_L)\otimes\overline{\operatorname{Res}}_L^G(X).
\]
We refer to its right adjoint \(\iota_{L,G}^{H,x}\) as the \emph{selected coinduction}.
\end{definition}

\begin{theorem}
\label{thm6.16}
Let \(G\) be a countably based profinite group  and let
\(x=\mathcal P_G(H,\mathfrak q)\), where \(H\) is topologically finitely generated. Suppose \(L\leq_oG\) contains
\(N_G(H)\), and put
\(
 x_L:=\mathcal P_L(H,\mathfrak q).
\)  Then selected
restriction induces a tensor-triangulated equivalence
\[
 \rho_{G,L}^{H,x}:
 \mathcal L_G(H;x)\xrightarrow{\ \sim\ }
 \mathcal L_L(H;x_L)
\]
with quasi-inverse \(\iota_{L,G}^{H,x}\).
\end{theorem}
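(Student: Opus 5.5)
We reduce to the finite case, Proposition~\ref{prop6.13}, using the compact continuity of Proposition~\ref{prop6.7}. Since the \(\mathcal L\)'s are compactly generated, it suffices to show that \(\rho:=\rho_{G,L}^{H,x}\) restricts to an equivalence on compact objects. An equivalence of compact parts that preserves coproducts and comes with a coproduct-preserving right adjoint \(\iota_{L,G}^{H,x}\) (Lemma~\ref{lem6.14}) is then an equivalence of the big categories, with quasi-inverse \(\iota\).

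\emph{Step 1: choose compatible finite quotients.} Since \(G\) is countably based, fix a descending cofinal sequence \(U_n\trianglelefteq_oG\) with \(U_n\leq N_0\cap L\), where \(N_0\) is as in Lemma~\ref{lem6.3}. We also arrange \(U_n\cap H\leq \Phi(H)=H^p[H,H]\); this is possible because \(\Phi(H)\) is open in the topologically finitely generated \(H\). Put \(G_n=G/U_n\), \(L_n=L/U_n\) and \(H_n=HU_n/U_n\). By Remark~\ref{rem6.8}, the finite stages \(\mathcal L_n\) of Proposition~\ref{prop6.7} are exactly \(\mathcal L_{G_n}(H_n;x_n)\), and likewise for \(L\) with the points \(x_{L,n}\). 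Proposition~\ref{prop6.7} then gives
\[
\mathcal L_G(H;x)^c\simeq\operatorname*{colim}_n\mathcal L_{G_n}(H_n;x_n)^c,
\qquad
\mathcal L_L(H;x_L)^c\simeq\operatorname*{colim}_n\mathcal L_{L_n}(H_n;x_{L,n})^c .
\]

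\emph{Step 2: normalizer condition at finite stages.} This is the main obstacle. Proposition~\ref{prop6.13} needs \(N_{G_n}(H_n)\leq L_n\). In general \(N_{G_n}(H_n)\) can be larger than \(N_G(H)U_n/U_n\). I would argue by compactness, as in Lemma~\ref{lem6.3}. Suppose that for infinitely many \(n\) there is \(g_n\notin L\) with \(g_n\) normalizing \(HU_n\). The sets
\[
\{g\in G\setminus L\mid H^g\leq HU_n\}
\]
are closed (because \(L\) is open), nonempty and decreasing. Their intersection therefore contains some \(g\notin L\) with \(H^g\leq\bigcap_n HU_n=H\). Conjugation by \(g\) then embeds \(H\) into itself as a closed subgroup isomorphic to \(H\). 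Here I would use a finiteness argument to upgrade this to \(H^g=H\), giving \(g\in N_G(H)\leq L\), a contradiction. One way is to run the same argument for \(H^{g^{-1}}\). Another is to use the Hopfian property of finitely generated profinite groups together with equality of the indices \([HU_n:H^gU_n]\) in the finite quotients. After discarding finitely many \(n\), we get \(N_{G_n}(H_n)\leq L_n\) for all \(n\). We also need the points to correspond: \(x_{L,n}\) should be the point \(\mathcal P_{L_n}(H_n,\mathfrak q_n)\) attached to \(x_n\). This follows because the Weyl groups agree once the normalizer is contained in \(L_n\).

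\emph{Step 3: compatibility and conclusion.} Restriction commutes with inflation, and the idempotents \(e\) are compatible with inflation, since inflation carries the Koszul generator \(c_H\) to the corresponding finite-stage generators. Hence the selected restrictions \(R_{G_n,L_n}^{H_n,x_n}\) form a morphism of filtered systems. On the colimit of compact parts this morphism is identified with \(\rho^c\). Each \(R_{G_n,L_n}\) is a tensor equivalence by Proposition~\ref{prop6.13}, and so it restricts to equivalences of compact parts. A filtered colimit of equivalences is an equivalence, so \(\rho^c\) is an equivalence. It remains to check that \(\rho\) preserves compact objects, which holds because it is a localization of a restriction along an open subgroup. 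Together with the fact that \(\rho\) preserves coproducts, the standard compactly generated argument shows that \(\rho\) is an equivalence. Its right adjoint \(\iota_{L,G}^{H,x}\) is then the quasi-inverse.
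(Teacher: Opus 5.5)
Your proposal is correct and follows the paper's proof essentially step for step. The shared steps are the choice of \(U_n\le L\cap N_0\) with \(U_n\cap H\le\Phi(H)\), the compactness argument forcing \(N_{G_n}(H_n)\le L_n\) for large \(n\), and the passage to colimits of compact parts via Proposition~\ref{prop6.7}, Lemma~\ref{lem6.6} and the finite-stage equivalences of Proposition~\ref{prop6.13}. The one cosmetic difference is in Step 2: the paper keeps the full normalizer condition, which is closed and decreasing in \(n\) because \(U_n\trianglelefteq G\), so a limit point satisfies \(H^gU_m=HU_m\) for all \(m\) and hence \(H^g=H\) directly. Your weakening to \(H^g\le HU_n\) forces an extra upgrade step; your order count in \(G/U_n\) handles it, and the Hopfian property plays no role there.
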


\begin{proof}
Let \(c_H^G\) be the compact object supplied by
Lemma~\ref{lem6.3}, and put
\(
 c_H^L:=\operatorname{Res}_L^G(c_H^G).
\)
Then we have
\[
 \begin{aligned}
 \operatorname{supp}_{\mathscr T(L)_{x_L}}
   \bigl(q_{x_L}(c_H^L)\bigr)
 &=
 \rho_L^{-1}\bigl(
   \operatorname{supp}_{X_G}(c_H^G)\bigr)
 \cap\operatorname{gen}_{X_L}(x_L)\\
 &=
 \rho_L^{-1}\bigl(Z_G(H;x)\bigr)
 \cap\operatorname{gen}_{X_L}(x_L)\\
 &=
 Z_L(H;x_L).
 \end{aligned}
\]

Since \(H\) is topologically finitely generated,
\(\Phi(H)\) is open in \(H\). Hence there exists
\(V\trianglelefteq_oG\) such that
\(
 V\cap H\leq\Phi(H).
\)
Choose a cofinal descending sequence
\[
 U_n\trianglelefteq_oG
\]
such that
\(
 U_n\leq L\cap N_0\cap V.
\)
In particular,
\(
 U_n\cap H\leq\Phi(H).
\)
Since \(c_H^G\) is inflated from \(G/N_0\) and \(U_n\leq N_0\), it is
also inflated from \(G/U_n\). Put
\(A_n:=H\cap U_n,\)\(
 G_n:=G/U_n,\)\(
 L_n:=L/U_n,\)\(
 H_n:=HU_n/U_n\simeq H/A_n.\) Then by the compatibility of
restriction with inflation, we have \[
 \operatorname{Infl}_{G_n}^{G}(c_{H,n}^G)\simeq c_H^G,
 \qquad
 \operatorname{Infl}_{L_n}^{L}(c_{H,n}^L)\simeq c_H^L.
\] where \(
 c_{H,n}^G:=
 \bigotimes_{M_i\in \mathrm{Max}_o(H)}
 \operatorname{kos}_{G_n}(M_iN_0/U_n),\) \(
 c_{H,n}^L:=
 \operatorname{Res}_{L_n}^{G_n}(c_{H,n}^G).
\)
Let \(x_n\) and \(x_{L,n}\) denote the images of \(x\) and \(x_L\) in
the corresponding finite-group spectra. By definition we have \(A_n\leq M_i\) for every \(i\), and the subgroups \(M_i/A_n\) are precisely the maximal
subgroups of \(H_n\).  By \cite[Lemma~3.16]{BalmerGallauerArtin}, together
with the defining property of \(N_0\), we have
\[
 \operatorname{supp}_{X_{G_n}}(c_{H,n}^G)
 \cap\operatorname{gen}_{X_{G_n}}(x_n)
 =
 Z_{G_n}(H_n;x_n)
\]
and similarly
\[
 \operatorname{supp}_{X_{L_n}}(c_{H,n}^L)
 \cap\operatorname{gen}_{X_{L_n}}(x_{L,n})
=Z_{L_n}(H_n;x_{L,n}).
\]
For all sufficiently large \(n\),
\(
 N_{G_n}(H_n)\leq L_n.
\)
Suppose otherwise. After passing to a cofinal subsequence, choose
\(g_n\in G\setminus L\) whose image in \(G_n\) normalizes \(H_n\).
Since \(G\setminus L\) is compact, after passing to a subsequence we
may assume that \(g_n\to g\in G\setminus L\). For every fixed \(m\),
the image of \(g_n\) normalizes \(H_m\) for all sufficiently large
\(n\), and hence
\(
 H^{g}U_m=HU_m.
\)
Intersecting over \(m\) gives \(H^g=H\), contradicting
\(N_G(H)\leq L\).

After discarding finitely many terms, assume that
\(
 N_{G_n}(H_n)\leq L_n\)
for every \(n\).
Let
\(
 q_n:G_{n+1}\twoheadrightarrow G_n,\)
and \(
 \ell_n:L_{n+1}\twoheadrightarrow L_n \)
be the  quotient maps, and put
\(\mathcal E_n:=
 \mathcal L_{G_n}(H_n;x_n)^c,\) \(\mathcal F_n:=
 \mathcal L_{L_n}(H_n;x_{L,n})^c.\) By Lemma~\ref{lem6.4}, we have
\(
 \mathcal E_n=
 \operatorname{thick}^{\otimes}
 \bigl(q_{x_n}(c_{H,n}^G)\bigr)\) and \(
 \mathcal F_n=
 \operatorname{thick}^{\otimes}
 \bigl(q_{x_{L,n}}(c_{H,n}^L)\bigr).
\) On the other hand, we have
\(
 q_n^*(c_{H,n}^G)\simeq c_{H,n+1}^G\) and
 \(\ell_n^*(c_{H,n}^L)\simeq c_{H,n+1}^L.
\)
Hence inflation restricts to tensor exact functors
\(
 q_n^*:\mathcal E_n\longrightarrow\mathcal E_{n+1}\) and
\( \ell_n^*:\mathcal F_n\longrightarrow\mathcal F_{n+1}.
\)

Let \(S_n:=\bigl(R_{G_n,L_n}^{H_n,x_n}\bigr)^c:
 \mathcal E_n\longrightarrow\mathcal F_n\)
be the selected restriction on compact objects. Since
\(N_{G_n}(H_n)\leq L_n\),
Proposition~\ref{prop6.13} shows that \(S_n\) is
an equivalence. By Proposition~\ref{prop6.11}, there are canonical natural isomorphisms
\(\theta_n:
 S_{n+1}q_n^*
 \xrightarrow{\ \sim\ }
 \ell_n^*S_n\) which are compatible with iterated quotient maps.

Let \(
 u_n^G:\mathcal E_n\longrightarrow\mathcal E\) and
\( u_n^L:\mathcal F_n\longrightarrow\mathcal F
\)
be induced  inflations where \(
 \mathcal E:=\mathcal L_G(H;x)^c,\) \(
 \mathcal F:=\mathcal L_L(H;x_L)^c\). The Hom-colimit and idempotent-descent arguments in the proof of
Proposition~\ref{prop6.7} verify conditions of Lemma~\ref{lem6.6} for both towers. Applying that lemma to \(q_{x_{n}}(c_{H,n}^G)\) and \(q_{x_{L,n}}(c_{H,n}^L)\) gives
\[
  \operatorname*{colim}_n\mathcal E_n
 \xrightarrow{\ \sim\ }
 \mathcal E,
 \qquad
  \operatorname*{colim}_n\mathcal F_n
 \xrightarrow{\ \sim\ }
 \mathcal F.
\]
Hence
\[
 (\rho_{G,L}^{H,x})^c:
 \mathcal L_G(H;x)^c
 \xrightarrow{\ \sim\ }
 \mathcal L_L(H;x_L)^c
\]
is an equivalence, and thus we obtain the desired tensor-triangulated equivalence.
\end{proof}

\section{Pointwise minimality and stratification}\label{sec:minimality}

In this section we prove pointwise minimality for \(\mathscr T(P)\) by treating separately the four types of closed subgroups  described in
Lemma~\ref{lem4.3}. Together with the local-to-global principle established in
Theorem~\ref{thm4.7}, this yields the stratification of \(\mathscr T(P)\) for every \(p\)-decomposition group \(P\).

\begin{proposition}\label{prop7.1}
Let \(P\) be a \(p\)-decomposition group and \(x=\mathcal P_P(1,\mathfrak q)\in X_P\). Then \(\mathscr T(P)\) satisfies minimality at \(x\).
\end{proposition}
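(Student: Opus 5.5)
Since the subgroup parameter of \(x\) is trivial, Proposition~\ref{prop4.4} gives \(\mathcal V_P=\operatorname{Spec}^h(k)\), a single point, namely \(\mathfrak m_P\). So \(x=\mathcal P_P(1,\mathfrak m_P)\) is the unique point with trivial subgroup parameter. By Corollary~\ref{cor5.3} it is the unique point outside \(\bigcup_{c\in\mathscr K_{\mathrm{ac}}(P)}\operatorname{supp}(c)\). The plan is to identify \(\operatorname{Loc}^{\otimes}(g_x)\) with the finite localization of \(\mathscr T(P)\) away from \(\mathscr K_{\mathrm{ac}}(P)\), and then use Heyer--Schneider.

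\textbf{Step 1.} First I will show that \(P\) is a torsion-free compact \(p\)-adic Lie pro-\(p\) group of dimension \(2\). It is torsion-free because it is an extension of \(\mathbb Z_p\) by \(\mathbb Z_p\). It is \(p\)-adic analytic because it is topologically finitely generated of finite rank; alternatively, an open uniform subgroup exists. Proposition~\ref{prop5.6} then gives \(\operatorname{gldim}(\mathcal A_P)\le 2\). Proposition~\ref{prop5.4} identifies the finite localization \(\mathscr T(P)/\operatorname{Loc}^{\otimes}(\mathscr K_{\mathrm{ac}}(P))\) with \(D(\mathcal A_P)\), with fully faithful right adjoint \(\rho_P\) (Remark~\ref{rem5.5}).

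\textbf{Step 2.} I will show that \(x\) is visible. Its closure \(\overline{\{x\}}\) is the complement of the Thomason subset \(Y:=\bigcup_{c\in\mathscr K_{\mathrm{ac}}(P)}\operatorname{supp}(c)\), because \(x\) is the only point outside \(Y\) and \(Y\) is generalization-closed. Hence \(\{x\}=X_P\setminus Y\). Then \(g_x=f_Y\) is the finite-localization idempotent, the unit of the local category, and
\[
\operatorname{Loc}^{\otimes}(g_x)=g_x\otimes\mathscr T(P)\simeq \mathscr T(P)/\operatorname{Loc}^{\otimes}(\mathscr K_{\mathrm{ac}}(P))\simeq D(\mathcal A_P).
\]
Since \(g_x\) is an idempotent ring object, localizing tensor ideals of \(\mathscr T(P)\) contained in \(\operatorname{Loc}^{\otimes}(g_x)\) correspond to localizing tensor ideals of \(g_x\otimes\mathscr T(P)\).

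\textbf{Step 3.} Via the equivalence \(\Upsilon_P\), localizing tensor ideals of \(g_x\otimes\mathscr T(P)\) correspond to localizing tensor ideals of \(D(\mathcal A_P)\), and each of these is in particular a localizing subcategory. Theorem~\ref{thm5.1} says there is no nonzero proper localizing subcategory. So every nonzero localizing ideal inside \(\operatorname{Loc}^{\otimes}(g_x)\) is all of it, which is exactly minimality at \(x\).

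\textbf{Main obstacle.} The delicate part is Step 2: identifying \(\operatorname{Loc}^{\otimes}(g_x)\) precisely with the image of \(\rho_P\), that is, \(g_x\otimes-\simeq\rho_P\Upsilon_P\). This requires checking that \(Y\) is exactly the supports of the compact acyclics, using Corollary~\ref{cor5.3}, and that the Balmer--Favi idempotent at a point whose complement is Thomason is the finite localization.

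Step 1 also needs care. I expect to cite the standard fact that finitely generated pro-\(p\) groups of finite rank are \(p\)-adic analytic (Lazard). For \(p=2\), I need torsion-freeness directly, and it follows because \(\pi\) and \(A\) are torsion-free.
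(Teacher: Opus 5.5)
Your proposal is correct. It uses the same inputs as the paper's proof:
\begin{itemize}
\item Propositions~\ref{prop5.4} and~\ref{prop5.6}, to identify \(\mathscr T(P)/\operatorname{Loc}^{\otimes}(\mathscr K_{\mathrm{ac}}(P))\) with \(D(\mathcal A_P)\);
\item Proposition~\ref{prop4.4}, to get \(\mathcal V_P=\{x\}\);
\item Theorem~\ref{thm5.1} (Heyer--Schneider).
\end{itemize}

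The difference is how minimality is brought back to \(\mathscr T(P)\). The paper records that the localized category satisfies minimality at its unique point. It then invokes \cite[Remark~5.4]{BarthelHeardSanders}, which needs the local-to-global principle (Theorem~\ref{thm4.7}) and weak noetherianity of \(X_P\) (Proposition~\ref{prop4.5}). You instead use Corollary~\ref{cor5.3} to write \(\{x\}=X_P\setminus Y\) with \(Y=\bigcup_{c\in\mathscr K_{\mathrm{ac}}(P)}\operatorname{supp}(c)\) Thomason. Then \(g_x=f_Y\), and \(\Gamma_x\mathscr T(P)=f_Y\otimes\mathscr T(P)\) is literally the finite localization. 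Here the compacts supported in \(Y\) are exactly \(\mathscr K_{\mathrm{ac}}(P)\), because thick ideals of a rigid category are radical. Localizing ideals inside \(\Gamma_x\mathscr T(P)\) therefore transport to localizing subcategories of \(D(\mathcal A_P)\), and Heyer--Schneider finishes. Your route is more self-contained: it does not use Theorem~\ref{thm4.7} at all. It only uses that \(\{x\}\) is the complement of a Thomason subset, so that \(\Gamma_x\mathscr T(P)\) coincides with the quotient in which minimality is checked.

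There is one slip in Step~2. Thomason subsets are specialization-closed, not generalization-closed, so your stated reason for \(\overline{\{x\}}=X_P\setminus Y\) does not work; that claim would need a separate argument. You never actually use it. The equality \(\{x\}=X_P\setminus Y\) comes straight from Corollary~\ref{cor5.3}, and this is \emph{weak} visibility, with \(X_P\) and \(Y\) as the two Thomason subsets. Weak visibility is all the Balmer--Favi construction needs to give \(g_x=e_{X_P}\otimes f_Y=f_Y\). So replace ``visible'' by ``weakly visible'' and delete the closure sentence.
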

\begin{proof}
Since \(P\) is a
torsion-free compact \(p\)-adic Lie pro-\(p\) group of dimension \(2\), by
Proposition~\ref{prop5.4} and Proposition~\ref{prop5.6},  there is
a  finite localization
\[\frac{\mathscr T(P)}
      {\operatorname{Loc}^{\otimes}
       (\mathscr K_{\mathrm{ac}}(P))}
 \simeq
 D\bigl(\mathcal A_P)
 \simeq
 K\bigl(\operatorname{Inj}
        \mathcal A_P)
\]
where \(D\bigl(\mathcal A_P)\) has only trivial localizing ideals by Theorem~\ref{thm5.1}.
By Proposition~\ref{prop4.4}, the spectrum
\(\mathcal V_P\) is the singleton \(\{x\}\).  Thus \(K\bigl(\operatorname{Inj}
        \mathcal A_P)\) satisfies minimality at \(x\). Since \(\mathscr T(P)\) satisfies the local-to-global principle by Theorem~\ref{thm4.7} and  \(X_P\) is weakly noetherian by Proposition~\ref{prop4.5}, the conclusion follows from \cite[Remark~5.4]{BarthelHeardSanders}.
\end{proof}

\begin{lemma}\label{lem7.2}
Let \(F:\mathcal T\to\mathcal S\) be a geometric functor between
rigidly-compactly generated tt-categories with weakly noetherian
spectra, and let \(U\) be its right adjoint. Suppose that:
\begin{enumerate}[label=\textup{(\roman*)},leftmargin=2.0em]
\item
\(\operatorname{Spc}(F)^{-1}(x)=\{y\}\) for some \(x\in \operatorname{Spc}(\mathcal T^{c})\) and \(y\in \operatorname{Spc}(\mathcal S^{c})\);

\item
\(\Gamma_y\mathcal S\) is a minimal localizing ideal;

\item
\(
 g_x\in
 \operatorname{Loc}_{\mathcal T}^{\otimes}
 \bigl(U(\mathbf{1}_{\mathcal S})\otimes g_x\bigr).
\)
\end{enumerate}
Then \(\Gamma_x\mathcal T\) is minimal.
\end{lemma}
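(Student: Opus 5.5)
To show $\Gamma_x\mathcal T=\operatorname{Loc}^{\otimes}(g_x)$ is minimal, we fix a nonzero object $t\in\Gamma_x\mathcal T$ and aim to prove $g_x\in\operatorname{Loc}^{\otimes}(t)$. The strategy is to transport $t$ to $\mathcal S$ along $F$, use minimality at $y$ there, and come back along $U$ using the projection formula and hypothesis (iii).

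The first ingredient is the behaviour of the Balmer--Favi idempotents under a geometric functor. Since $F$ is geometric, $F(g_x)\simeq g_{\operatorname{Spc}(F)^{-1}(\{x\})}$, and hypothesis (i) identifies this with $g_y$. This is standard: $F$ preserves the idempotent triangles $e_Y\to\mathbf 1\to f_Y$ for Thomason subsets, with preimage supports. Consequently, for every $t\in\Gamma_x\mathcal T$ we have $t\simeq g_x\otimes t$, hence $F(t)\simeq g_y\otimes F(t)\in\Gamma_y\mathcal S$.

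The key step is to show that $F(t)\neq 0$. I would first prove that $U(\mathbf 1_{\mathcal S})\otimes t\neq0$, and then apply the projection formula
\[
U(\mathbf 1_{\mathcal S})\otimes t\simeq U(F(t)).
\]
For the nonvanishing, consider
\[
\mathcal C=\{s\in\mathcal T\mid \operatorname{Hom}(s,\Sigma^n w)=0\ \text{for all } n\},
\]
for a suitable test object $w$. A cleaner route is to use the internal Hom: the class
\[
\{s\mid s\otimes t=0\}
\]
is a localizing ideal. If $U(\mathbf 1)\otimes t=0$, this ideal contains $U(\mathbf 1)\otimes g_x$, so by (iii) it contains $g_x$. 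That gives $g_x\otimes t=0$, whereas $g_x\otimes t\simeq t\neq0$, a contradiction. Hence $U(F(t))\neq0$, and therefore $F(t)\neq 0$.

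Minimality of $\Gamma_y\mathcal S$ from (ii) now gives $g_y\in\operatorname{Loc}^{\otimes}_{\mathcal S}(F(t))$. Since $U$ preserves coproducts (it is the right adjoint of a geometric functor between rigidly-compactly generated categories) and is exact, $U^{-1}$ of a localizing subcategory is localizing. We need $\mathcal T$-module compatibility, which the projection formula supplies: the class
\[
\{s\in\mathcal S\mid U(s)\in\operatorname{Loc}^{\otimes}_{\mathcal T}(U F(t))\}
\]
is localizing, and it is closed under $F(a)\otimes-$ for $a\in\mathcal T$ because $U(F(a)\otimes s)\simeq a\otimes U(s)$. The obstacle is closure under tensoring with arbitrary $s'\in\mathcal S$, which this does not give directly. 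I would circumvent it by working in the form needed: $\operatorname{Loc}^{\otimes}_{\mathcal S}(F(t))=\operatorname{Loc}\langle s'\otimes F(t)\rangle$, and $g_y\in\operatorname{Loc}^{\otimes}(F(t))$ suffices through objects $F(a)\otimes F(t)$. This holds because $\mathcal S$ is compactly generated, so its localizing ideals are generated under $\operatorname{Loc}$ by tensoring with compacts, though compacts of $\mathcal S$ need not come from $\mathcal T$. This is the step I expect to be the main difficulty. The fallback is to use $\Gamma_y\mathcal S$ minimality in the form: $U(g_y)\otimes g_x\in\operatorname{Loc}^{\otimes}_{\mathcal T}(U(F(t))\otimes g_x)$, verified via the localizing ideal $\{a\in\mathcal T\mid U(F(a))\otimes\ldots\}$. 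Granting this, we get
\[
U(\mathbf 1)\otimes g_x\simeq U(g_y)\in\operatorname{Loc}^{\otimes}(U(F(t)))=\operatorname{Loc}^{\otimes}(U(\mathbf 1)\otimes t)\subseteq\operatorname{Loc}^{\otimes}(t).
\]
Then (iii) yields $g_x\in\operatorname{Loc}^{\otimes}(t)$, which proves minimality.
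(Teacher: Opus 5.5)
\textbf{Gap: the transport step along $U$ is assumed, not proved.}

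Your outline is the paper's own argument. You use base change $F(g_x)\simeq g_y$, show $F(t)\neq 0$ via the kernel ideal $\{s\mid s\otimes t=0\}$ and (iii), apply minimality at $y$, then pull back along $U$ and finish with (iii). Everything up to $g_y\in\operatorname{Loc}^{\otimes}_{\mathcal S}(F(t))$ is correct. The problem is the step you flag yourself and then only ``grant'': deducing $U(g_y)\in\operatorname{Loc}^{\otimes}_{\mathcal T}(t)$ from $g_y\in\operatorname{Loc}^{\otimes}_{\mathcal S}(F(t))$. None of the following is an argument: the remark that compacts of $\mathcal S$ need not come from $\mathcal T$, the ``fallback'' through an unspecified ideal $\{a\mid U(F(a))\otimes\ldots\}$, or the phrase ``Granting this''. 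Since this is the only nonformal step in the lemma, the proposal is incomplete.

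\textbf{The missing idea.} Use the projection formula with the $\mathcal S$-variable arbitrary and $F(t)$ as the pulled-back factor:
\[
U(s\otimes F(t))\simeq U(s)\otimes t \quad\text{for every } s\in\mathcal S.
\]
Put $\mathcal C:=\{s\in\mathcal S\mid U(s)\in\operatorname{Loc}^{\otimes}_{\mathcal T}(t)\}$.
\begin{itemize}
\item $\mathcal C$ is localizing, because $U$ is exact and preserves coproducts.
\item $\mathcal C$ contains every $s\otimes F(t)$, by the displayed formula.
\item You do not need $\mathcal C$ to be closed under tensoring. The subcategory $\operatorname{Loc}\langle s\otimes F(t)\mid s\in\mathcal S\rangle$ is already a tensor ideal: for fixed $s''$, the objects $d$ with $s''\otimes d$ in it form a localizing subcategory that contains all the generators. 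So this subcategory equals $\operatorname{Loc}^{\otimes}_{\mathcal S}(F(t))$, and it lies in $\mathcal C$.
\end{itemize}
Hence $U(\mathbf 1_{\mathcal S})\otimes g_x\simeq UF(g_x)\simeq U(g_y)\in\operatorname{Loc}^{\otimes}_{\mathcal T}(t)$, and (iii) finishes the proof. This is exactly the paper's one line ``apply the coproduct-preserving functor $U$ and use the projection formula''.

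What sent you astray was choosing the target $\operatorname{Loc}^{\otimes}(UF(t))$ and the variant $U(F(a)\otimes s)\simeq a\otimes U(s)$. The target $\operatorname{Loc}^{\otimes}(t)$ is what you actually need, and the projection formula reaches it directly.
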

\begin{proof}
The point-idempotent base-change formula
\cite[Theorem~6.3]{BalmerFavi} gives \(F(g_x)\simeq g_y\). Let
\(0\neq t\in\operatorname{Loc}^{\otimes}(g_x)\). If \(F(t)=0\), the
projection formula gives \(U(\mathbf{1}_{\mathcal S})\otimes t=0\). Then  by~\textup{(iii)}, we have \(t\simeq g_x\otimes t=0\), a contradiction. Thus
\(F(t)\neq0\). Since \(F(t)\simeq F(g_x)\otimes F(t)\simeq g_y\otimes F(t)\in \Gamma_y\mathcal S\), then we have \(g_y\in\operatorname{Loc}^{\otimes}(F(t))\) by \textup{(ii)}. Applying the
coproduct-preserving functor \(U\) and using the projection formula
shows
\[
 U(\mathbf{1}_{\mathcal S})\otimes g_x\simeq U(F(g_x))\simeq U(g_y)\in\operatorname{Loc}^{\otimes}(t).
\]
Then the conclusion follows immediately from condition \textup{(iii)}.
\end{proof}

\begin{lemma}
\label{lem7.3}
Let \(P\) be a profinite pro-\(p\) group, \(V\trianglelefteq_oP\) and let \(R\) be the right adjoint of the modular \(V\)-fixed-points functor \(\Psi^V:\mathscr T(P)\longrightarrow\mathscr T(P/V).\)  Then \(R(1)\) is represented by a nonnegative
complex \[
 s=(\cdots\longrightarrow s_2\longrightarrow s_1
       \longrightarrow s_0\longrightarrow0)
\]
whose terms are set-indexed coproducts of finite permutation modules and
such that

\[
 s_0=k,\qquad
 s_1=\bigoplus_{\substack{K\leq_oP\\V\nleq K}}k(P/K).
\]

\end{lemma}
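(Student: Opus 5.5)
The plan is to reduce to the finite-group description of the right adjoint of modular fixed points from Balmer--Gallauer, and then pass to the profinite group \(P\) by inflation.

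First, recall what is known for a finite group \(G\) with normal \(p\)-subgroup \(N\). The modular fixed-points functor \(\Psi^N\colon\mathscr T(G)\to\mathscr T(G/N)\) is a finite localization of \(\mathscr T(G)\) followed by an equivalence. The localization is away from the thick ideal generated by the \(k(G/K)\) with \(N\nleq K\). Hence its right adjoint sends \(\mathbf 1\) to the local idempotent \(f\). This \(f\) sits in an idempotent triangle \(e\to\mathbf 1\to f\to\Sigma e\), with \(e\) in the localizing ideal generated by those \(k(G/K)\).

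Second, I would construct \(f\) explicitly, as Balmer--Gallauer do for \(\Psi^N\), by a ``bar''-type resolution. The construction starts from \(s_0=k\). The first term \(s_1=\bigoplus_{N\nleq K}k(G/K)\) maps to \(k\) via augmentation. Subsequent terms are obtained by successively killing the kernel of \(\operatorname{Hom}(k(G/K),-)\) on the partial complex for all \(K\) with \(N\nleq K\), using coproducts of such permutation modules. Concretely, the complex is the colimit of the standard cellular tower for the localization.

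A cleaner route is to use the fact that \(f\) can be realized as the complex whose fixed points \(s^{K}\) are acyclic for \(N\nleq K\) and which is the identity on \(\Psi^N\). One can take \(s\) to be the cone of the augmented complex \(\cdots\to s_2\to s_1\to 0\) mapping to \(k\). Here each \(s_i\) for \(i\geq1\) is a coproduct of \(k(G/K)\) with \(N\nleq K\), built inductively so that \((s_{\geq1})^{K}\to k^K\) is a quasi-isomorphism for all such \(K\). Each step is possible because we may add all \(k(P/K)\) indexed by sets of cycles, as in the small-object argument.

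For profinite \(P\), I would carry out the same small-object construction directly in \(K(\operatorname{Perm}(P;k))\), indexing over open \(K\leq_oP\) with \(V\nleq K\). The killing objects \(k(P/K)\) for \(V\nleq K\) are compact generators of \(\ker\Psi^V\). This identification uses the description \(\operatorname{kos}\) and supports from Recollection~\ref{rec2.6}, together with the fact that \(\Psi^V\) kills exactly \(k(P/K)\) with \(V\nleq K\) by the Brauer quotient formula. The resulting complex \(s\) then satisfies two properties. First, \(\operatorname{Hom}(k(P/K),\Sigma^i s)=0\) for \(V\nleq K\), so \(s\) is local. Second, the cone of \(k\to s\) lies in \(\operatorname{Loc}\{k(P/K)\mid V\nleq K\}\). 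Hence \(s\simeq R\Psi^V(\mathbf 1)=R(1)\).

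The normalization \(s_1=\bigoplus_{V\nleq K}k(P/K)\) is achieved by taking, at the first stage, one copy of each \(k(P/K)\) via its augmentation. This already surjects on \(K'\)-fixed points onto \(k\) for all \(K'\) with \(V\nleq K'\): take \(K=K'\). Thus the degree-zero homology is killed. The main obstacle is the identification of \(\ker\Psi^V\) with the localizing ideal generated by these permutation modules in the profinite setting. I would deduce it from the finite quotients \(P/U\), \(U\leq V\), via inflation and compact generation, since \(\Psi^V\) commutes with inflation.
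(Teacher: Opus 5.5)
Your proposal has a genuine gap. Your identification of \(R(1)\) rests on the claim that \(\Psi^V\) is a finite (Bousfield) localization of \(\mathscr T(P)\) away from \(\{k(P/K)\mid V\nleq K\}\). From this you conclude that \(R(1)\) is the right idempotent \(f\), so that it suffices to build a local \(s\) with \(\operatorname{cone}(k\to s)\in\operatorname{Loc}\{k(P/K)\mid V\nleq K\}\). That claim is false already for finite groups, and the object you construct is not \(R(1)\).

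Take \(P=V=C_2\) and \(\operatorname{char}k=2\). Your \(s\) is the cone of a free resolution of \(k\) over \(kC_2\). Use the minimal one, \(\cdots\xrightarrow{1+g}kC_2\xrightarrow{1+g}kC_2\xrightarrow{\varepsilon}k\); any other is homotopy equivalent. Since \((kC_2)^{C_2}=k(1+g)\) is killed both by \(1+g\) and by \(\varepsilon\), every differential of \(s^{C_2}\) vanishes. Hence \(\operatorname{Hom}(\Sigma^nk,s)\cong H_n(s^{C_2})\cong k\) for all \(n\geq0\).

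By adjunction, however, \(\operatorname{Hom}(\Sigma^nk,R(1))\cong\operatorname{Hom}_{D(k)}(\Sigma^nk,k)=0\) for \(n\neq0\). In fact \(R(1)\cong(k\xrightarrow{1+g}kC_2\xrightarrow{\varepsilon}k)\), and \(\Psi^VR(1)\cong k\oplus\Sigma^2k\). So the counit is not invertible and \(R\) is not fully faithful. Spectrally, a localization away from \(kC_2\) would have spectrum \(X_{C_2}\setminus\operatorname{supp}(kC_2)\). That set contains the generic point of the cohomological open \(\mathcal V_{C_2}\), whereas \(\operatorname{Spc}(\Psi^V)\) has one-point image.

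So the ``main obstacle'' you isolate cannot be overcome. Any construction that attaches only cells \(k(P/K)\) with \(V\nleq K\), and controls \(s^K\) only for \(V\nleq K\), produces \(f\), not \(R(1)\).

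What you need instead is to characterize \(R(1)\) directly by adjunction:
\(\operatorname{Hom}(\Sigma^nk(P/K),R(1))\cong\operatorname{Hom}_{\mathscr T(P/V)}(\Sigma^n\Psi^Vk(P/K),\mathbf 1)\).
This is \(0\) if \(V\nleq K\), and is \(k\) concentrated in \(n=0\) if \(V\leq K\). A construction that works:

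1. Take \(s_0=k\) and \(s_1=\bigoplus_{V\nleq K}k(P/K)\) with the augmentation. This kills \(H_0(s^K)\) for \(V\nleq K\). It leaves \(H_0(s^K)=k\) for \(V\leq K\), because then every \(K\)-orbit on \(P/K'\) with \(V\nleq K'\) has size a nontrivial power of \(p\), so the augmentation vanishes on \((s_1)^K\).
2. Kill all cycles of \(s^K\) in degrees \(\geq1\), for every open \(K\). Here you must attach cells \(k(P/K)\) with arbitrary \(K\), including \(V\leq K\); for \(C_2\) this is the copy of \(k\) in degree \(2\).
3. Since \(\Psi^V(s_1)=0\), projection onto degree zero is a chain map \(\Psi^V(s)\to k\). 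Its adjoint \(s\to R(1)\) induces isomorphisms on \(\operatorname{Hom}(\Sigma^nk(P/K),-)\) for all open \(K\) and all \(n\). It is therefore an isomorphism, because these objects generate \(\mathscr T(P)\).

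This argument needs no localization statement and no reduction to finite quotients.
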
\begin{proof}
The proof is the same as in \cite[Lemma~9.2]{BalmerGallauerGeometry}.
\end{proof}

\begin{lemma}\label{lem7.4}
Let \(G\) be profinite and let
\[
 s=(\cdots\longrightarrow s_2\longrightarrow s_1
      \longrightarrow s_0\longrightarrow0)
\] be a nonnegative complex in \(\mathscr T(G)\),
where every \(s_i\) is a set-indexed coproduct of finite permutation
modules, and \(c\)  a bounded complex of finite
permutation modules. If \(s_1\otimes c\simeq0,\) then for all sufficiently large \(n\), we have
\(s_0^{\otimes n}\otimes c\in\operatorname{Loc}^{\otimes}(s).\)
\end{lemma}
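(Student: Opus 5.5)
The plan is to run the usual ``tensor-nilpotence of the connecting morphism'' argument, in the spirit of \cite[Lemma~9.2]{BalmerGallauerGeometry}. Let \(t:=\sigma_{\geq1}s\) be the brutal truncation. The degreewise split short exact sequence \(0\to s_0\to s\to t\to0\) gives an exact triangle
\[
 t[-1]\longrightarrow s_0\xrightarrow{\ f\ } \ldots,
\qquad\text{more precisely}\qquad
 s_0\longrightarrow s\longrightarrow t\xrightarrow{\ \partial\ } s_0[1],
\]
so that \(s\simeq\operatorname{cone}(f)\) for a morphism \(f\colon t[-1]\to s_0\) (up to the sign and shift convention).

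The first step is the standard octahedral induction. It shows that for every \(n\geq1\) the cone of
\[
 f^{\otimes n}\colon t[-1]^{\otimes n}\longrightarrow s_0^{\otimes n}
\]
lies in \(\operatorname{thick}^{\otimes}(\operatorname{cone} f)=\operatorname{thick}^{\otimes}(s)\subseteq\operatorname{Loc}^{\otimes}(s)\). This uses \(f^{\otimes n}=(f^{\otimes(n-1)}\otimes 1)\circ(1\otimes f)\), the fact that each factor has cone of the form (object)\(\otimes\operatorname{cone}(f)\), and the octahedral axiom. Tensoring with \(c\) keeps us inside the ideal \(\operatorname{Loc}^{\otimes}(s)\).

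The second step, which I expect to be the main obstacle, is to show that \(f^{\otimes n}\otimes c=0\) for \(n\) large. Once this holds, the triangle for \(f^{\otimes n}\otimes c\) exhibits \(s_0^{\otimes n}\otimes c\) as a direct summand of \(\operatorname{cone}(f^{\otimes n})\otimes c\in\operatorname{Loc}^{\otimes}(s)\), and we are done. For this step I would work at the level of complexes, using Recollection~\ref{rec2.3}: every object here is a complex whose terms are coproducts of finite permutation modules, and so lies in \(\operatorname{Loc}_{K(\operatorname{Perm})}\{k(G/H)\}\). On that subcategory morphisms in \(\mathscr T(G)\) agree with morphisms in \(K(\operatorname{Perm}(G;k))\), so it suffices to show that the chain map representing \(f^{\otimes n}\otimes c\) is null-homotopic.

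The hypothesis \(s_1\otimes c\simeq0\) enters here. The map \(f\) is the connecting map out of \(t[-1]\), and only the bottom term \(s_1\) of \(t\) maps nontrivially to \(s_0\). I would use \(s_1\otimes c\simeq 0\) to replace \(t\otimes c\) by \(\sigma_{\geq2}s\otimes c\) up to isomorphism. Concretely, \(s_1\otimes c\) is contractible, so the triangle \(s_1\to t\to\sigma_{\geq2}s\) becomes an isomorphism after \(-\otimes c\). This improves the connectivity: after tensoring with \(c\), the source of \(f\) is concentrated in homological degrees \(\geq1\) rather than \(\geq0\), compatibly with \(f\). Iterating over the \(n\) tensor factors, the source of \(f^{\otimes n}\otimes c\) is then represented by a complex concentrated in degrees \(\geq n+a\), where \([a,b]\) is the amplitude of \(c\). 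The target \(s_0^{\otimes n}\otimes c=k^{\otimes n}\otimes c\simeq c\) is concentrated in degrees \([a,b]\).

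For \(n>b-a\) a chain map from a complex living in degrees \(\geq n+a>b\) to one living in \([a,b]\) is zero on the nose. Hence \(f^{\otimes n}\otimes c=0\). The delicate point to check carefully is that the identification \(t\otimes c\simeq\sigma_{\geq2}s\otimes c\) can be chosen compatibly with \(f\), and compatibly across the tensor powers. If this bookkeeping becomes awkward, my fallback is to argue directly in \(K(\operatorname{Perm})\) with the explicit total complexes \(t^{\otimes n}\otimes c\). There I would use a contracting homotopy of \(s_1\otimes c\) to build the null-homotopy by hand.
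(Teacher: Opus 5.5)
Your proposal is correct and is essentially the standard tensor-nilpotence argument behind Balmer--Gallauer's Lemma~3.20, which the paper simply cites: \(s\simeq\operatorname{cone}(f)\) gives \(\operatorname{cone}(f^{\otimes n})\otimes c\in\operatorname{Loc}^{\otimes}(s)\), and \(f^{\otimes n}\otimes c=0\) for \(n>b-a\) by degree reasons. The compatibility issue you flag is not real: with \(\pi\colon t[-1]\to\sigma_{\geq2}s[-1]\) the quotient, each map \(1\otimes\cdots\otimes\pi\otimes\cdots\otimes 1_c\) has cone a tensor multiple of \(s_1\otimes c\simeq0\), so \(\pi^{\otimes n}\otimes 1_c\) is an isomorphism and you may simply precompose \(f^{\otimes n}\otimes 1_c\) with its inverse; also, do not specialize to \(s_0=k\), which the lemma does not assume---you only need that \(s_0^{\otimes n}\otimes c\) is concentrated in degrees \([a,b]\).
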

\begin{proof}
This is a special case of \cite[Lemma~3.20]{BalmerGallauerGeometry}.
\end{proof}

\begin{proposition}\label{prop7.5}
Let \(P\) be a \(p\)-decomposition group and \(x=\mathcal P_P(H,\mathfrak q)\in X_P\). If \(H\) is open
in \(P\), then \(\mathscr T(P)\) satisfies minimality at \(x\).
\end{proposition}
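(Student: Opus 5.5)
Since \(H\) is open, it contains an open normal subgroup of \(P\). I would choose \(V\trianglelefteq_oP\) with \(V\leq H\); concretely, take the normal core of \(H\), or a smaller \(N_n\) from a cofinal sequence. Then I apply Lemma~\ref{lem7.2} to the geometric functor \(F=\Psi^V:\mathscr T(P)\to\mathscr T(P/V)\), the modular \(V\)-fixed-points functor, with right adjoint \(U=R\). By the description of the image of modular fixed points used in the proof of Theorem~\ref{thm4.7} (via \cite[Lemma~3.18]{BalmerGallauerArtin}), \(\operatorname{Spc}(\Psi^V)\) is injective with image \(Y_V=\{\mathcal P_P(K,\mathfrak r)\mid V\leq K\}\). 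Since \(V\leq H\), the point \(x\) lies in this image, so its fiber is a single point \(y=\mathcal P_{P/V}(H/V,\mathfrak q)\). This gives hypothesis (i). Both spectra are weakly noetherian: \(X_P\) by Proposition~\ref{prop4.5}, and \(X_{P/V}\) because it is noetherian, being the spectrum of a finite group. Hypothesis (ii) holds because \(\mathscr T(P/V)\) is stratified by \cite[Theorem~9.11]{BalmerGallauerGeometry}, and stratification includes minimality at every point.

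The main work is hypothesis (iii), namely \(g_x\in\operatorname{Loc}^{\otimes}(R(\mathbf 1)\otimes g_x)\). By Lemma~\ref{lem7.3}, \(R(\mathbf 1)\) is represented by a complex \(s\) with \(s_0=k\) and \(s_1=\bigoplus k(P/K)\), the sum running over open \(K\) with \(V\nleq K\). I want to apply Lemma~\ref{lem7.4}, but \(g_x\) is not compact, so I first find a compact substitute \(c\). Since \(X_P\) is weakly noetherian, \(x\) is weakly visible, and \(g_x=e_Y\otimes f_Z\) with \(\{x\}=Y\cap(X_P\setminus Z)\), where \(Y\) and \(Z\) are Thomason. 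I then choose a compact \(c\) with \(x\in\operatorname{supp}(c)\subseteq Y\) and with \(\operatorname{supp}(c)\) contained in the locus of points whose subgroup parameter contains \(V\) up to conjugacy. Taking \(c=c'\otimes\bigotimes_{M}\operatorname{kos}_P(M)\), with \(M\) running over the finitely many open subgroups not containing \(V\), works. The Koszul support formula of Recollection~\ref{rec2.6} says \(\mathcal P_P(K,\mathfrak r)\in\operatorname{supp}\operatorname{kos}_P(M)\) iff \(K\nleq_PM\). If every open \(M\) with \(V\nleq M\) satisfies \(K\nleq_P M\), then \(V\leq K\); when \(K\) is nonopen we can still pick such an \(M\) containing \(K\). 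Hence the support of the Koszul product is exactly \(Y_V\), and in particular it contains \(x\).

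Next I check \(s_1\otimes c\simeq0\). Each \(k(P/K)\otimes c\simeq\operatorname{Ind}_K^P\operatorname{Res}^P_K c\). Its support lies in the image of \(X_K\), which consists of points with subgroup parameter \(\leq_P K\). Such a parameter cannot contain \(V\), so this image is disjoint from \(\operatorname{supp}(c)\). I then conclude vanishing from compactness of the object together with the support disjointness, using the fact that support detects vanishing of compact objects; for a set-indexed sum this reduces to each summand. Lemma~\ref{lem7.4} then gives \(c\simeq k^{\otimes n}\otimes c\in\operatorname{Loc}^{\otimes}(s)\). Tensoring with \(g_x\) yields \(c\otimes g_x\in\operatorname{Loc}^{\otimes}(R(\mathbf 1)\otimes g_x)\). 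Finally \(g_x\in\operatorname{Loc}^{\otimes}(c\otimes g_x)\), because \(\operatorname{Loc}^{\otimes}(c)=\operatorname{Loc}^{\otimes}(e_{\operatorname{supp}c})\) and \(e_{\operatorname{supp}c}\otimes g_x\simeq g_x\) since \(x\in\operatorname{supp}(c)\).

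I expect the main obstacle to be the support bookkeeping for \(s_1\otimes c\). First, I must make sure the sum in \(s_1\) over all \(K\) with \(V\nleq K\) does not break the vanishing argument. Second, I must confirm that \(\operatorname{supp}(c)\) avoids every nonopen subgroup parameter not containing \(V\); this is exactly why I build \(c\) from the Koszul objects of the finitely many open \(M\) with \(V\nleq M\), which exist in finite number because \(P\) is topologically finitely generated. Once that is settled, the conclusion follows from Lemma~\ref{lem7.2}.
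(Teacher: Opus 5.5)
Your architecture matches the paper's. You apply Lemma~\ref{lem7.2} to \(F=\Psi^V\), verify (i) and (ii) exactly as you say, and obtain (iii) from Lemmas~\ref{lem7.3} and~\ref{lem7.4} applied to a compact object supported on \(\operatorname{Im}(\psi_V)=\{\mathcal P_P(L,\mathfrak r)\mid V\leq L\}\), followed by the Balmer--Favi step \(g_x\in\operatorname{Loc}^{\otimes}(c\otimes g_x)\).

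The gap is in your construction of that compact object. The family of open subgroups \(M\leq_oP\) with \(V\nleq M\) is \emph{not} finite. Every open subgroup properly contained in \(V\) belongs to it, and there are infinitely many of these, e.g.\ all but finitely many terms of any cofinal sequence of open normal subgroups. Topological finite generation only bounds the number of open subgroups of a \emph{given index}. So \(\bigotimes_M\operatorname{kos}_P(M)\) over your family is not an object of \(\mathscr K(P)\), and Lemma~\ref{lem7.4} has nothing to apply to. What your support argument actually needs is a fixed \emph{finite} family of open subgroups not containing \(V\) such that every closed \(K\) with \(V\nleq K\), nonopen ones included, is subconjugate to a member. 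This is not automatic.

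The missing idea is the Frattini argument the paper uses, here via \(W_0\leq\Phi(V)\) and also in the proof of Theorem~\ref{thm4.7}.
\begin{enumerate}
\item Since \(V\) is open in the topologically finitely generated pro-\(p\) group \(P\), the subgroup \(\Phi(V)\) is open and normal in \(P\).
\item Take \(\mathcal M=\{M\leq_oP\mid \Phi(V)\leq M,\ V\nleq M\}\). This family is finite because it corresponds to subgroups of the finite group \(P/\Phi(V)\).
\item For closed \(K\) with \(V\nleq K\), the open subgroup \(K\Phi(V)\) lies in \(\mathcal M\). Indeed, if \(V\leq K\Phi(V)\), then \(V=(V\cap K)\Phi(V)\). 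A proper \(V\cap K<V\) would then lie in a maximal open subgroup of \(V\), which contains \(\Phi(V)\), a contradiction.
\item Hence \(\operatorname{supp}\bigl(\bigotimes_{M\in\mathcal M}\operatorname{kos}_P(M)\bigr)=\operatorname{Im}(\psi_V)\). This is the same support as the paper's \(z_V\), the inflation of \(\operatorname{zul}_{P/W_0}(V/W_0)\).
\end{enumerate}
Use this \(c\), and drop the superfluous factor \(c'\), so that \(c\) is visibly a bounded complex of finite permutation modules as Lemma~\ref{lem7.4} requires. With that change, the rest of your argument goes through exactly as in the paper: vanishing of \(s_1\otimes c\) summand by summand, \(c\in\operatorname{Loc}^{\otimes}(R(\mathbf 1))\), and the idempotent step.
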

\begin{proof}
Choose an open normal subgroup
\(V\trianglelefteq_oP\) with \(V\leq H\), and then choose
\(W_0\trianglelefteq_oP\) with \(W_0\leq\Phi(V)\). In the finite group
\(P/W_0\), let \(\operatorname{zul}_{P/W_0}(V/W_0)\) be the compact
object defined in \cite[Notation~7.21]{BalmerGallauerGeometry}, whose support is the image of
modular \((V/W_0)\)-fixed points, and inflate it to a compact object
\(z_V\) of \(\mathscr T(P)\). Its support is exactly
\[
 \operatorname{supp}(z_V)=\operatorname{Im}(\psi_V)
   =\{\mathcal P_P(L,\mathfrak r)\mid V\leq L\}.
\]
Indeed, by \cite[Proposition~7.18]{BalmerGallauerGeometry}, \(y=\mathcal P_P(L,\mathfrak p)\in \operatorname{supp}(z_V)\) if and only if \(V\leq LW_0\). Intersecting with \(V\)
gives \(V=(L\cap V)W_0\), and \(W_0\leq\Phi(V)\) forces \(V\leq L\).

Let \(F=\Psi^V:\mathscr T(P)\to\mathscr T(P/V)\) and let \(R\) be
its right adjoint. Let \(s\) be as in Lemma \ref{lem7.3}.
For every \(K\leq_oP\) with \(V\nleq K\), the compact object
\(k(P/K)\otimes z_V\) has empty support. Indeed, a point in its
support would be of the form \(\mathcal P_P(L,\mathfrak r)\), with
\(L\) satisfying both \(V\leq L\) and
\(L\leq_P K\); normality of \(V\) would then imply \(V\leq K\), a
contradiction. Hence \(k(P/K)\otimes z_V=0\). Since tensor product
preserves coproducts,

\[
 s_1\otimes z_V\simeq
 \bigoplus_{\substack{K\leq_oP\\V\nleq K}}
       (k(P/K)\otimes z_V)\simeq0.
\]
Applying Lemma
\ref{lem7.4} to the bounded compact complex
\(z_V\), and using \(s_0=k\), gives

\[
 z_V\in\operatorname{Loc}^{\otimes}(s)
      =\operatorname{Loc}^{\otimes}(R(1)).
\]
Since \(x\in\operatorname{supp}(z_V)\), it follows from \cite[Theorem~6.3]{BalmerFavi} that
\[
 g_x\in\operatorname{Loc}^{\otimes}(z_V\otimes g_x)
    \subseteq\operatorname{Loc}^{\otimes}(R(1)\otimes g_x).
\]

The spectral map of \(F\) is the closed immersion \(\psi_V\). Since \(x\in \operatorname{Im}(\psi_V)\),  the
fiber over \(x\) is a single point, say \(y\). The target group \(P/V\) is
finite, so \cite[Theorem~9.11]{BalmerGallauerGeometry} makes \(\mathscr T(P/V)\) satisfy minimality at \(y\). The compact spectrum for \(P\) is weakly noetherian by
Proposition~\ref{prop4.5}, and the finite-group spectrum is
noetherian by \cite[Proposition~9.1]{BalmerGallauerGeometry}. Then the conclusion follows from Lemma \ref{lem7.2}.
\end{proof}

\medskip

\begin{proposition}\label{prop7.6}
Let \(P\) be a \(p\)-decomposition group and \(x=\mathcal P_P(H,\mathfrak q)\in X_P\). If \(H=p^rA\) for some \(r\geq0\), then \(\mathscr T(P)\) satisfies minimality at \(x\).
\end{proposition}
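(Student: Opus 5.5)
The plan is to reduce to the Weyl group through modular fixed points, and to feed Lemma~\ref{lem7.2} a localized version of the capture argument from Proposition~\ref{prop7.5}. Since \(H=p^rA\) is normal in \(P\) (Lemma~\ref{lem4.3}), I will use the modular \(H\)-fixed-point functor for a closed normal pro-\(p\) subgroup, as in \cite{BalmerGallauerArtin},
\[
 F=\Psi^H:\mathscr T(P)\longrightarrow\mathscr T(W),\qquad W=P/p^rA\simeq C_{p^r}\rtimes\mathbb Z_p .
\]
I will also need its right adjoint \(U\). The spectral map \(\operatorname{Spc}(F)\) is the injective map \(\psi_H\), with image the points whose parameter contains \(H\). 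Hence the fiber over \(x=\mathcal P_P(H,\mathfrak q)\) is the single point \(y=\mathcal P_W(1,\mathfrak q)\), which gives condition (i) of Lemma~\ref{lem7.2}. Both spectra are weakly noetherian, by Proposition~\ref{prop4.5} for \(X_P\) and by Lemma~\ref{lem4.3} with Proposition~\ref{prop4.4} for \(W\), whose nonopen subgroups again have at most two points each.

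\emph{Condition (iii).} First I will show, as in Lemma~\ref{lem7.3}, that \(U(\mathbf 1)\) is represented by a nonnegative complex \(s\) of coproducts of finite permutation modules. It should have \(s_0=k\) and \(s_1=\bigoplus_{K\leq_oP,\ H\nleq K}k(P/K)\); the proof of \cite[Lemma~9.2]{BalmerGallauerGeometry} should go through with \(V\) replaced by the closed normal subgroup \(H\). Because \(H\) is not open, no compact object has support exactly \(\operatorname{Im}\psi_H\), so I will work only in the stalk at \(x\). There I use the compact object \(c_H\) of Lemma~\ref{lem6.3}, which satisfies \(\operatorname{supp}(c_H)\cap\operatorname{gen}(x)=Z_P(H;x)\). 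A point of \(\operatorname{supp}(k(P/K)\otimes c_H)\cap\operatorname{gen}(x)\) would have parameter conjugate to \(H\) and subconjugate to \(K\); normality of \(H\) then forces \(H\leq K\), which is excluded. Hence \(q_x(s_1\otimes c_H)=0\). I then run Lemma~\ref{lem7.4} after applying the smashing localization \(q_x\), which preserves coproducts and tensors; this should give \(q_x(c_H)\in\operatorname{Loc}^{\otimes}(q_x(s))\). Since \(x\in\operatorname{supp}(c_H)\) and \(g_x\otimes j_xq_x(-)\simeq g_x\otimes(-)\), \cite[Theorem~6.3]{BalmerFavi} then yields
\[
g_x\in\operatorname{Loc}^{\otimes}(c_H\otimes g_x)\subseteq\operatorname{Loc}^{\otimes}(U(\mathbf 1)\otimes g_x).
\]

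\emph{Condition (ii)}, minimality of \(\mathscr T(W)\) at \(y=\mathcal P_W(1,\mathfrak q)\), is the step I expect to be the main obstacle. For \(r=0\) we have \(W\simeq\mathbb Z_p\), and the argument of Proposition~\ref{prop7.1} applies verbatim: Theorem~\ref{thm5.1} together with Propositions~\ref{prop5.4} and \ref{prop5.6} in dimension \(1\), plus the local-to-global principle for \(W\). For \(r\geq1\), \(W\) has torsion, so \(\operatorname{gldim}\mathcal A_W=\infty\) and Proposition~\ref{prop5.4} is unavailable. Moreover \(\mathcal V_W\) now has two points, \(\mathfrak m_W=(u)\) and the generic point \((0)\).

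For \(\mathfrak q=\mathfrak m_W\), I will try Lemma~\ref{lem7.2} once more, using restriction to the open torsion-free subgroup \(U_0=\{0\}\rtimes p^m\mathbb Z_p\simeq\mathbb Z_p\). By Lemma~\ref{lem3.3} and the description of the points above \(1\), the fiber over \(y\) is the single trivial-parameter point \(\mathfrak m_{U_0}\). Condition (iii) should follow because \(\operatorname{Coind}_{U_0}^W(\mathbf 1)\) is finite and its support contains \(y\).

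For the generic point \((0)\) restriction to \(U_0\) gives an empty fiber, so I would work in \(K(\operatorname{Inj}\mathcal A_W)\) directly. Its compacts are \(D^b(kW)\) by \cite[Proposition~2.3]{Krause}. I would localize away from \((u)\), i.e.\ invert the periodicity class \(u\), and compare the result with the finite cyclic quotient by restricting to \(C_{p^r}\times p^m\mathbb Z_p\). I expect the comparison to use Theorem~\ref{thm5.1} for the pro-\(p\) Lie group \(W\) on the acyclic part, together with finite-group minimality for \(C_{p^r}\) \cite[Theorem~9.11]{BalmerGallauerGeometry}. With (i)--(iii) in hand, Lemma~\ref{lem7.2} gives minimality of \(\Gamma_x\mathscr T(P)\).
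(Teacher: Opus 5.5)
Your reduction to the Weyl group breaks down at the point you flagged, and the proposal does not close it. For \(r\geq1\) the stratum \(\sigma_P^{-1}([p^rA]_P)\) contains \(x=\mathcal P_P(p^rA,(0))\). Along \(\Psi^H\), condition (ii) of Lemma~\ref{lem7.2} then becomes minimality of \(\mathscr T(W)\), with \(W\simeq C_{p^r}\rtimes\mathbb Z_p\), at \(\mathcal P_W(1,(0))\). For this you give only an intended strategy, and the tools you name do not supply it:
\begin{itemize}
\item You have not produced a finite localization \(\mathscr T(W)\to K(\operatorname{Inj}\mathcal A_W)\), since Proposition~\ref{prop5.4} needs finite global dimension.
\item Theorem~\ref{thm5.1} concerns \(D(\mathcal A_W)\), which accounts for the closed point \(\mathfrak m_W\). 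The generic point \((0)\) is the part where \(u\) is inverted, i.e.\ the part of \(K(\operatorname{Inj}\mathcal A_W)\) away from \(\mathfrak m_W\) (the analogue of the stable module category). Nothing in the paper or in your sketch controls localizing ideals there.
\item Restriction to a torsion-free open subgroup cannot help, since, as you observe, its fiber over this point is empty.
\end{itemize}
Your arguments for \(r=0\) and for \(\mathfrak q=\mathfrak m_W\) are fine. Three further steps would need justification, but they are secondary: the construction of \(\Psi^H\) for nonopen \(H\), the shape of its right adjoint, and your use of Lemma~\ref{lem7.4} inside the stalk, where \(s_1\otimes c_H\) vanishes only after applying \(q_x\).

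The paper avoids the Weyl group entirely. It applies Lemma~\ref{lem7.2} to \(\operatorname{Res}^P_A\colon\mathscr T(P)\to\mathscr T(A)\) with \(A\simeq\mathbb Z_p\).
\begin{itemize}
\item Condition (ii) is free, because \(\mathscr T(\mathbb Z_p)\) is stratified by Balmer--Gallauer.
\item Condition (i) holds for both values of \(\mathfrak q\), because \(H^\bullet(P/p^rA;k)\to H^\bullet(A/p^rA;k)\) is surjective with square-zero kernel. The closed nonopen subgroup \(A\) keeps the torsion \(A/p^rA\) in its Weyl group, so it still sees the generic point. Your torsion-free open subgroup loses exactly this.
\item The real work is condition (iii). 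The \(A\)-coinvariants adjunction identifies \(U(k)\simeq\operatorname{Infl}_B^P\rho_B(C(B,k))\). The Heyer--Schneider resolution \(0\to k\to C(B,k)\to C(B,k)\to0\), combined with Corollary~\ref{cor5.3}, then gives \(g_x\in\operatorname{Loc}^{\otimes}(U(k)\otimes g_x)\).
\end{itemize}
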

\begin{proof}
The group \(B\) is a torsion-free compact \(p\)-adic Lie pro-\(p\) group of dimension \(1\), by
Proposition~\ref{prop5.4} and Proposition~\ref{prop5.6},  we have a finite localization
\[
 \Upsilon_B:\mathscr T(B)\rightleftarrows
 K(\operatorname{Inj}\mathcal A_B):\rho_B.
\]

The smooth module
\(C(B,k)=\operatorname{Coind}_{\{1\}}^B(k)\) is injective,
because restriction to the trivial subgroup is exact and coinduction is
its right adjoint.  We therefore
regard it as an object of \(K(\operatorname{Inj}\mathcal A_B)\)
concentrated in degree zero. Choose a topological generator \(\gamma\) of \(B\). Dualizing the
standard Iwasawa resolution
\[
 0\longrightarrow k[[B]]
   \xrightarrow{\gamma-1}k[[B]]
   \longrightarrow k\longrightarrow0
\]
and using \(k[[B]]^\vee\simeq C(B,k)\), as in
\cite[\S1.1]{HeyerSchneider}, gives an injective resolution
\[
 0\longrightarrow k\longrightarrow C(B,k)
   \xrightarrow{\gamma-1}C(B,k)\longrightarrow0.
\]
Thus  the
exactness of \(\rho_B\) gives
\(
 \rho_B\Upsilon_B(k)
 \in
 \operatorname{thick}\bigl(\rho_B(C(B,k))\bigr).
\) By Proposition~\ref{prop4.4}, applied to \(H=A\) and
\(W_P(A)=B\), the spectrum \(\mathcal V_B\) consists of one
point, denoted by \(y\). Its subgroup parameter is \([1]_B\).
Moreover,
\(\mathscr T(B)=\mathscr T(\mathbb Z_p)\) is stratified by
\cite[Theorem~6.7]{BalmerGallauerArtin}, so the tensor idempotent \(g_y\)
is defined.

By Corollary~\ref{cor5.3},
\(\rho_B\Upsilon_B(k)\) is the right idempotent
corresponding to \(\mathcal V_B=\{y\}\). Hence we have
\(
 g_y\otimes\rho_B\Upsilon_B(k)\simeq g_y,
\)
and thus
\(
 g_y\in
 \operatorname{Loc}^{\otimes}
 \bigl(\rho_B(C(B,k))\otimes g_y\bigr).
\) Let
\(
 I=\operatorname{Infl}_B^P.
\) It is geometric, and the induced spectral map
\[h=\operatorname{Spc}(I^c):X_P\longrightarrow X_B\]
satisfies \(h(x)=y\). Then we have
\(
 I(g_y)\simeq g_{h^{-1}(y)}\) and \(g_x\otimes I(g_y)\simeq g_x.\) Applying \(I\) to \(
 g_y\in
 \operatorname{Loc}^{\otimes}
 \bigl(\rho_B(C(B,k))\otimes g_y\bigr)
\) and tensoring with \(g_x\), we obtain
\[
 g_x\in
 \operatorname{Loc}^{\otimes}
 \bigl(I(\rho_B(C(B,k)))\otimes g_x\bigr).
\]
Now let \(U\) be the right adjoint of \(\operatorname{Res}_A^P\). We claim that \(U(k)\simeq I(\rho_B(C(B,k))).\) The additive functor of \(A\)-coinvariants, applied degreewise to
complexes of permutation modules, induces an exact functor
\[
 (-)_A:\mathscr T(P)\longrightarrow\mathscr T(B)
\]
which is left adjoint to \(I\). In particular,
\(
 k(P/K)_A\simeq k(B/\pi(K)),
\)
where \(\pi:P\to B\) is the quotient map. Consequently, for every
\(t\in\mathscr T(P)\), there are natural isomorphisms
\[
\begin{aligned}
 \operatorname{Hom}_{\mathscr T(P)}
       \bigl(t,I(\rho_B(C(B,k)))\bigr)
 &\simeq
 \operatorname{Hom}_{\mathscr T(B)}
       \bigl(t_A,\rho_B(C(B,k))\bigr)\\
 &\simeq
 \operatorname{Hom}_{K(\operatorname{Inj}\mathcal A_B)}
       \bigl(\Upsilon_B(t_A),C(B,k)\bigr)\\
 &\simeq
 \operatorname{Hom}_{K(k)}
       \bigl(
          \operatorname{Res}_{\{1\}}^B
          \Upsilon_B(t_A),k
       \bigr)\\
 &\simeq
 \operatorname{Hom}_{\mathscr T(A)}
       \bigl(\operatorname{Res}_A^P(t),k\bigr).
\end{aligned}
\]
For the last isomorphism, choose a permutation-complex representative
\(X\) of \(t\). After forgetting the \(B\)-action, an injective
representative of \(\Upsilon_B(X_A)\) is quasi-isomorphic to \(X_A\)
and hence homotopy equivalent to it, since \(k\) is a field. Then the last isomorphism follows from the
natural isomorphism of mapping complexes
\[
 \operatorname{Hom}_{kA}^{\bullet}
       \bigl(\operatorname{Res}_A^P X,k\bigr)
 \simeq
 \operatorname{Hom}_k^{\bullet}(X_A,k).
\]
This proves the claim.  Then we have
\(
 g_x\in
 \operatorname{Loc}^{\otimes}
 \bigl(U(k)\otimes g_x\bigr).
\)

It remains to show that the fiber of
\[
 \operatorname{Spc}(\operatorname{Res}_A^P):
 X_A
 \longrightarrow
 X_P
\]
over \(x\) is a singleton. Let
\(\mathcal P_A(K,\mathfrak s)\) lie in this fiber. By
\cite[Remark~7.6]{BalmerGallauerGeometry}, \(K\) and
\(H\) are \(P\)-conjugate, and therefore we have
\(
 K=H=p^rA.
\)
If \(r=0\), the two relevant spectra are singletons. If
\(r>0\), put \(D=A/p^rA\). For the extension
\[
 1\longrightarrow D\longrightarrow P/p^rA
   \longrightarrow B\longrightarrow1,
\]
the Lyndon--Hochschild--Serre spectral sequence has only columns zero
and one, since \(\operatorname{cd}_p(B)=1\), and the \(B\)-action on
\(H^\bullet(D;k)\) is trivial. It follows that
\[
 H^\bullet(P/p^rA;k)\longrightarrow H^\bullet(D;k)
\]
is surjective with kernel square-zero. Thus restriction induces a homeomorphism on homogeneous prime spectra,
and the fiber over \(x\) consists of a single point. Since
\(\mathscr T(A)=\mathscr T(\mathbb Z_p)\) is stratified by
\cite[Theorem~6.7]{BalmerGallauerArtin}, the conclusion follows from
Lemma~\ref{lem7.2} immediately.
\end{proof}

\begin{lemma}\label{lem7.7}
Let \( Q=A\rtimes_{\theta}C\) with \(A\simeq C\simeq\mathbb Z_p\) and \(\theta:C\to\mathbb Z_p^\times\) injective, and let \(1\neq H\leq_oC\).
 Then there exist a cofinal descending sequence \( U_n\trianglelefteq_oQ \) and an integer \(r\geq0\) with the following properties:
 \begin{enumerate}
 \item[(1)]
 Set \(G_n=Q/U_n, H_n=HU_n/U_n, C_n=C/(C\cap U_n), N_n=N_{G_n}(H_n). \)
 Then for every \(n\),  the inclusion \(C\hookrightarrow Q\) and the projection \(Q\to C\) induce homomorphisms \( j_n:C_n\longrightarrow N_n, \) \(\varpi_n:N_n\longrightarrow C_n \) satisfying \( \varpi_nj_n=\operatorname{id}_{C_n}. \)
 \item[(2)]
 There are \(\alpha_n:N_{n+1}\longrightarrow N_n\) and \(\beta_n:C_{n+1}\longrightarrow C_n \) such that
 \[ \varpi_n\alpha_n=\beta_n\varpi_{n+1}, \qquad \alpha_nj_{n+1}=j_n\beta_n,\qquad \alpha_{n,n+r} = j_n\beta_{n,n+r}\varpi_{n+r}. \]
\end{enumerate}
\end{lemma}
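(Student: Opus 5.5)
The plan is to build the tower \(U_n\) explicitly from congruence subgroups of \(A\) and \(C\), compute the normalizers \(N_n\) by hand, and then read off all maps and identities from that computation. Write \(Q=A\rtimes_\theta C\) additively with group law \((a,c)(a',c')=(a+\theta(c)a',c+c')\), so that \(A\trianglelefteq Q\). Since \(\theta\) is continuous, for each \(n\geq1\) we can choose \(m_n\) with \(\theta(p^{m_n}C)\subseteq 1+p^n\mathbb Z_p\), and we arrange \(m_n\) to be strictly increasing. Put
\[
U_n:=p^nA\rtimes p^{m_n}C .
\]
We then check that \(U_n\trianglelefteq_oQ\).
\begin{itemize}
\item Conjugation by \((0,d)\) multiplies \(A\) by the unit \(\theta(d)\), so it preserves \(p^nA\).
\item Conjugation by \((a,0)\) sends \((0,c)\) to \(((1-\theta(c))a,c)\). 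For \(c\in p^{m_n}C\) the first coordinate lies in \(p^nA\).
\end{itemize}
Hence \(U_n\) is normal. It is open, the sequence is descending, and it is cofinal because \(\bigcap_n U_n=1\). Then
\[
G_n=(A/p^nA)\rtimes C_n,\qquad C_n=C/p^{m_n}C .
\]
Since \(H\leq C\) with \(H\cap U_n=p^{m_n}C\cap H\), the image \(H_n\) lies in the complement \(C_n\). We may shrink finitely many terms so that \(H_n\neq1\).

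Next comes the normalizer computation, which is the key step. Let \(h_0\) be a topological generator of \(H=p^sC\). Because \(\theta\) is injective, \(\theta(h_0)\neq1\), and we can define
\[
t:=v_p\bigl(1-\theta(h_0)\bigr)<\infty .
\]
For \((a,c)\in G_n\) we have
\[
(a,c)(0,h)(a,c)^{-1}=\bigl((1-\theta(h))a,\,h\bigr).
\]
The group \(C_n\) is abelian, and every \(h\in H\) satisfies \(v_p(1-\theta(h))\geq t\). It follows that \((a,c)\) normalizes \(H_n\) exactly when \((1-\theta(h_0))a\in p^nA\), that is, when \(a\in p^{n-t}A\) (for \(n\geq t\); we discard smaller \(n\)). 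Hence
\[
N_n=\bigl(p^{n-t}A/p^nA\bigr)\rtimes C_n ,
\]
whose \(A\)-part is cyclic of the fixed order \(p^t\). This is where I expect the only real care to be needed. One point is to verify that the \(H_n\)-conjugates are taken inside \(H_n\) and not merely inside \(G_n\); this holds because the second coordinate is unchanged. The other point is to handle the shift of index when \(m_n\) is large compared with \(s\).

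With this description the maps are immediate.
\begin{itemize}
\item \(j_n\colon C_n\to N_n\) is the inclusion of the complement.
\item \(\varpi_n\colon N_n\to C_n\) is the restriction of the projection \(G_n\to C_n\) induced by \(Q\to C\). It is a homomorphism because \(A/p^nA\) is normal, and clearly \(\varpi_nj_n=\mathrm{id}\). This gives (1).
\item For (2), the quotient map \(G_{n+1}\to G_n\) sends \(H_{n+1}\) onto \(H_n\), so it sends \(N_{n+1}\) into \(N_n\); this defines \(\alpha_n\). The map \(\beta_n\) is the quotient \(C_{n+1}\to C_n\).
\end{itemize}
The first two identities in (2) then hold because all the maps are induced by the inclusion of \(C\) and the projection onto \(C\) at the level of \(Q\).

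Finally take \(r:=t\). The composite \(\alpha_{n,n+r}\) sends \((a,c)\), with \(a\in p^{n+r-t}A=p^nA\), to \((a \bmod p^n, \bar c)=(0,\bar c)\). This is precisely \(j_n\beta_{n,n+r}\varpi_{n+r}(a,c)\), which establishes the last identity.
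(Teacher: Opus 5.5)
Your proof is correct and follows the paper's route: you build an explicit congruence tower \(U_n\), compute \(N_n=(p^{n-t}A/p^nA)\rtimes C_n\) using \(t=v_p(1-\theta(h_0))\), and take \(r=t\), so that the \(A\)-part of \(N_{n+r}\) dies in \(G_n\). The only difference is cosmetic: you decouple the levels, taking \(U_n=p^nA\rtimes p^{m_n}C\) with \(m_n\) supplied by continuity of \(\theta\), whereas the paper uses the common level \(s_n=n+n_0\) and the \(p\)-adic valuation formula \(v_p(u^{p^m}-1)=v_p(u-1)+m\) to guarantee \(\theta(p^{s_n}C)\subseteq 1+p^{s_n}\mathbb Z_p\).
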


\begin{proof}
Write \(H=p^dC\) and choose a topological generator \(h\) of \(H\). Let \(
 e=v_p\bigl(\theta(h)-1\bigr).
\)
Since \(\theta\) is injective and \(h\neq0\), one has
\(\theta(h)\neq1\), so \(e<\infty\).

Choose a topological generator \(c\) of \(C\) and put \(u=\theta(c)\).
If \(p\) is odd, then \(u\in1+p\mathbb Z_p\), and the standard
valuation formula gives
\(
 v_p\bigl(u^{p^m}-1\bigr)=v_p(u-1)+m.
\)
If \(p=2\), then for \(m\geq1\), the corresponding formula gives
\(
 v_2\bigl(u^{2^m}-1\bigr)
 =
 v_2(u^2-1)+m-1
 \geq m.
\)
It follows that for all sufficiently large \(m\), we have
\(
 \theta(p^mC)\subseteq1+p^m\mathbb Z_p.
\)

Choose \(n_0>\max\{d,e\}\) such that the preceding
inclusion holds for every \(m\geq n_0\). For \(n\geq0\), set
\[
 s_n=n+n_0,
 \qquad
 U_n=p^{s_n}A\rtimes p^{s_n}C.
\]
Then
\(
 \bigl(1-\theta(p^{s_n}C)\bigr)A\subseteq p^{s_n}A.
\)
Since \(p^{s_n}A\) is invariant under \(C\), this implies
\(
 U_n\trianglelefteq_oQ.
\)
Moreover, \(U_{n+1}\leq U_n\), and the subgroups \(U_n\) form a
neighbourhood basis of the identity. Hence they form a cofinal
descending sequence of open normal subgroups of \(Q\).

Since \(s_n>d\), one has
\(
 U_n\cap H=p^{s_n}C\leq p^{d+1}C=pH=\Phi(H).
\)
Furthermore, we have
\(
 C_n=C/p^{s_n}C\) and \(H_n=H/p^{s_n}C\),
so
\(C_n/H_n\simeq C/H.\)
Let \(\pi_C:Q\to C\) denote the projection, then
\(
 \pi_C(U_n)=p^{s_n}C=C\cap U_n.
\)
Thus both the inclusion \(C\hookrightarrow Q\) and the projection
\(\pi_C\) descend to the finite quotients.

We next compute \(N_n=N_{G_n}(H_n)\). Let \(h_n\) denote the image of \(h\) in \(H_n\). For
\(
 (a,c')\in G_n=(A/p^{s_n}A)\rtimes(C/p^{s_n}C),
\)
we have
\[
 (a,c')(0,h_n)(a,c')^{-1}
 =
 \bigl((1-\theta(h))a,h_n\bigr).
\]
Since \(H_n\to C_n\) is injective and \(h_n\) generates \(H_n\), the element
\((a,c')\) normalizes \(H_n\) if and only if it fixes \(h_n\), which is
equivalent to
\(
 \bigl(\theta(h)-1\bigr)a\in p^{s_n}A.
\)
Since \(s_n>e\) and \(v_p(\theta(h)-1)=e\), this is equivalent to
\(
 a\in p^{s_n-e}A/p^{s_n}A,
\)
and hence
\(
 N_n
 =
 \bigl(p^{s_n-e}A/p^{s_n}A\bigr)\rtimes C_n.
\)

Now we can define
\[
 j_n:C_n\longrightarrow N_n,
 \qquad
 c'\longmapsto(0,c'),
\]
and
\[
 \varpi_n:N_n\longrightarrow C_n,
 \qquad
 (a,c')\longmapsto c'.
\]
These are precisely the maps induced by the inclusion \(C\hookrightarrow
Q\) and the projection \(Q\to C\), and they satisfy
\(
 \varpi_nj_n=\operatorname{id}_{C_n}.
\)

Since \(U_{n+1}\leq U_n\), there is a quotient map
\(
 \delta_n:G_{n+1}\longrightarrow G_n
\)
which sends \(H_{n+1}\) onto \(H_n\), and therefore sends
\(N_{n+1}\) into \(N_n\). Consequently, it induces
\[
 \alpha_n=\delta_n|_{N_{n+1}}:
 N_{n+1}\longrightarrow N_n.
\]
The restriction of \(\alpha_n\)  to \(C_{n+1}\) is precisely the quotient map
\(
 \beta_n:C_{n+1}\longrightarrow C_n.
\)
By definition, we have
\(
 \varpi_n\alpha_n=\beta_n\varpi_{n+1}\) and
\(\alpha_nj_{n+1}=j_n\beta_n.
\)

Finally, put \(r=e\). The \(A\)-component of \(N_{n+r}\) is
\(p^{s_n}A/p^{s_n+e}A.\)
Its image under \(\alpha_{n,n+r}\) is zero. Hence \(\alpha_{n,n+r}\) factors through
\(\varpi_{n+r}\).
On \(C_{n+r}\), the resulting map is
\(\beta_{n,n+r}\). Therefore we have
\(\alpha_{n,n+r}=j_n\beta_{n,n+r}\varpi_{n+r}.\)
\end{proof}

\begin{lemma}\label{lem7.8}
Let \((\mathcal A_n,F_n)\) and \((\mathcal B_n,G_n)\) be strict
sequential diagrams of essentially small idempotent-complete
triangulated categories and exact functors. Suppose that exact functors
\(
 R_n:\mathcal A_n\longrightarrow\mathcal B_n,\) \(
 I_n:\mathcal B_n\longrightarrow\mathcal A_n
\)
satisfy
\(
 R_{n+1}F_n\simeq G_nR_n,\)  \(F_nI_n\simeq I_{n+1}G_n.\)
Assume further that there are transition-compatible natural isomorphisms
\(
 R_nI_n\simeq\operatorname{id}_{\mathcal B_n}
\)
and \(
 F_{n,n+r}\simeq I_{n+r}G_{n,n+r}R_n
\) , for some fixed \(r\geq0\).
Then \((R_n)_n\)  induces an exact equivalence
\[
 \operatorname*{colim}_n\mathcal A_n
 \simeq
 \operatorname*{colim}_n\mathcal B_n.
\]
\end{lemma}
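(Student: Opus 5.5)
The plan is to build the comparison functor on the explicit model of a filtered colimit and check essential surjectivity and full faithfulness directly. As in the proof of Lemma~\ref{lem6.6}, I represent objects of \(\mathcal A:=\operatorname*{colim}_n\mathcal A_n\) by pairs \((n,a)\) with \(a\in\mathcal A_n\). Morphisms are given by
\[
\operatorname{Hom}_{\mathcal A}\bigl((n,a),(m,a')\bigr)=\operatorname*{colim}_{k\geq n,m}\operatorname{Hom}_{\mathcal A_k}\bigl(F_{n,k}a,F_{m,k}a'\bigr),
\]
and \(\mathcal B:=\operatorname*{colim}_n\mathcal B_n\) is described the same way.

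The isomorphisms \(R_{n+1}F_n\simeq G_nR_n\) define a functor \(R:\mathcal A\to\mathcal B\), \((n,a)\mapsto(n,R_na)\). Likewise \(F_nI_n\simeq I_{n+1}G_n\) defines \(I:\mathcal B\to\mathcal A\). I am assuming these isomorphisms are compatible under iteration, which is how I read ``transition-compatible''. Both functors are exact, because exact triangles in the colimit are represented at a finite stage.

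First I show \(RI\simeq\operatorname{id}_{\mathcal B}\). The isomorphisms \(R_nI_n\simeq\operatorname{id}\) are compatible with the transition functors. They therefore assemble into a natural isomorphism \(RI\simeq\operatorname{id}\) on the colimit: an object \((n,b)\) is sent to \((n,R_nI_nb)\simeq(n,b)\), and naturality is checked at a common finite stage.

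Next I show \(IR\simeq\operatorname{id}_{\mathcal A}\), which is where the second hypothesis is used. In the colimit, \((n,a)\) is canonically isomorphic to \((n+r,F_{n,n+r}a)\). By hypothesis,
\[
F_{n,n+r}a\simeq I_{n+r}G_{n,n+r}R_na\simeq I_{n+r}R_{n+r}F_{n,n+r}a,
\]
where the second isomorphism uses the iterated form of \(R_{k+1}F_k\simeq G_kR_k\). Hence \((n,a)\simeq IR(n,a)\) in \(\mathcal A\). This gives a family \(\eta_{(n,a)}:(n,a)\xrightarrow{\sim}IR(n,a)\).

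It remains to see that \(\eta\) is natural. A morphism \((n,a)\to(m,a')\) is represented by some \(f:F_{n,k}a\to F_{m,k}a'\) in \(\mathcal A_k\). Naturality of the isomorphism \(F_{k,k+r}\simeq I_{k+r}G_{k,k+r}R_k\) in \(\mathcal A_k\) then gives naturality of \(\eta\) after passing to stage \(k+r\). Independence of the choice of representative uses transition-compatibility, i.e.\ that the isomorphisms for \(n\) and for \(n+1\) agree after applying \(F_{n+r,n+r+1}\).

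With both composites naturally isomorphic to the identity, \(R\) is an exact equivalence with quasi-inverse \(I\). Alternatively, \(RI\simeq\operatorname{id}\) already gives essential surjectivity and faithfulness of \(I\) on hom-colimits, and the retraction through stage \(n+r\) gives injectivity of \(R\) on hom-colimits.

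I expect the main obstacle to be the coherence bookkeeping. The isomorphisms \(F_{n,n+r}\simeq I_{n+r}G_{n,n+r}R_n\) must be compatible both with the transition functors and with the intertwining isomorphisms for \(R\) and \(I\). Otherwise \(\eta\) is only defined objectwise and may fail to be natural on the colimit. I would state precisely the compatibility I use, namely that these isomorphisms commute with \(F_{n+r}\) and \(G_{n+r}\) via the given intertwiners, and note that the applications (from Lemma~\ref{lem7.7}) produce it: there everything comes from the strict identities \(\alpha_{n,n+r}=j_n\beta_{n,n+r}\varpi_{n+r}\) after applying inflation and restriction.
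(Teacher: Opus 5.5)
Your proposal is correct and is essentially the paper's argument: you induce $R$ and $I$ from the intertwining isomorphisms, obtain $RI\simeq\operatorname{id}$ from the compatible isomorphisms $R_nI_n\simeq\operatorname{id}$, and obtain $IR\simeq\operatorname{id}$ from $\iota_n\simeq\iota_{n+r}F_{n,n+r}\simeq\iota_{n+r}I_{n+r}G_{n,n+r}R_n\simeq IR\,\iota_n$; your naturality and representative-independence check is exactly what the paper compresses into ``compatibility with the transition functors therefore yields $1\simeq IR$''. (Only your closing ``alternatively'' remark is incomplete, since it never shows that $R$ is full, but your main argument does not depend on it.)
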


\begin{proof}
The compatibility with the transition functors induces exact functors
\[ R:
 \operatorname*{colim}_n\mathcal A_n
 \longrightarrow
 \operatorname*{colim}_n\mathcal B_n,
 \qquad I:
 \operatorname*{colim}_n\mathcal B_n
 \longrightarrow
 \operatorname*{colim}_n\mathcal A_n.
\]
and \(R_nI_n\simeq1\) gives
\( R\circ I\simeq1\). Let \(\iota_n^{\mathcal A}\) and \(\iota_n^{\mathcal B}\) be the
canonical cocone functors. For every \(n\), we have
\[
\begin{aligned}
 \iota_n^{\mathcal A}
 &\simeq \iota_{n+r}^{\mathcal A}F_{n,n+r}\\
 &\simeq \iota_{n+r}^{\mathcal A}
          I_{n+r}G_{n,n+r}R_n\\
 &\simeq  I\, R\,\iota_n^{\mathcal A}.
\end{aligned}
\]
Compatibility with the transition functors therefore yields
\(1\simeq I R\). Hence \( R\) and
\( I\) are inverse equivalences.
\end{proof}

\begin{proposition}\label{prop7.9}
Let \(P=A\rtimes_\chi B\) be a \(p\)-decomposition group and
\(H\leq P\) a nonopen subgroup not contained in \(A\). Then for
\(
 C:=N_P(H),\)
 \(x=\mathcal P_P(H,\mathfrak q)\) and
 \(y=\mathcal P_C(H,\mathfrak q),
\) there is a tensor-triangulated equivalence
\[
 \Phi_H:
 \mathcal L_P(H;x)
 \xrightarrow{\sim}
 \mathcal L_C(H;y)
\]
such that
\(
 \Phi_H\bigl(q_x(g_x)\bigr)\simeq q_y(g_y)
\) and
\(
 \Phi_H=\operatorname{Res}_C^{AC}\circ\rho_{P,AC}^{H,x}.
\)
\end{proposition}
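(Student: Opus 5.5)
The plan is to factor $\Phi_H$ into two steps. The first is an open-subgroup step handled by Theorem~\ref{thm6.16}. The second is a nonopen step handled through finite quotients, using Lemma~\ref{lem7.7} and Lemma~\ref{lem7.8} on compact parts.

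First, by Lemma~\ref{lem4.3}, $H$ is procyclic with $H\cap A=1$, and $C=N_P(H)=C_P(H)$ is procyclic with $H$ open in $C$. Then $\pi(C)$ is a nontrivial closed subgroup of $B$, hence open, so $AC=\pi^{-1}(\pi(C))$ is open in $P$ and contains $N_P(H)=C$. The group $P$ is countably based and $H$ is topologically finitely generated. Hence Theorem~\ref{thm6.16} gives a tt-equivalence
\[
\rho_{P,AC}^{H,x}:\mathcal L_P(H;x)\xrightarrow{\sim}\mathcal L_{AC}(H;x_{AC}).
\]
Put $Q:=AC=A\rtimes_\theta C$ with $\theta=\chi\circ\pi|_C$. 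Since $\pi|_C$ is injective and $\chi$ is faithful, $\theta$ is injective, so $Q$ satisfies the hypotheses of Lemma~\ref{lem7.7}, with $1\neq H\leq_o C$. It therefore remains to show that $\operatorname{Res}^Q_C$ induces a tt-equivalence $\mathcal L_Q(H;x_Q)\simeq\mathcal L_C(H;y)$. Theorem~\ref{thm6.16} cannot be used for this step, because $C$ is not open in $Q$.

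For this step I take the tower $U_n$ and the maps $j_n,\varpi_n,\alpha_n,\beta_n$ from Lemma~\ref{lem7.7}. Since $U_n\cap H\leq\Phi(H)$, Remark~\ref{rem6.8} and Proposition~\ref{prop6.7} identify
\[
\mathcal L_Q(H;x_Q)^c\simeq\operatorname*{colim}_n\mathcal L_{G_n}(H_n;x_n)^c,
\]
and similarly for $C$ with the tower $C_n$. By Proposition~\ref{prop6.13}, selected restriction gives equivalences $\mathcal L_{G_n}(H_n;x_n)\simeq\mathcal L_{N_n}(H_n;x_{N_n})$, compatibly with inflation as in the proof of Theorem~\ref{thm6.16}. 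This yields
\[
\mathcal L_Q(H;x_Q)^c\simeq\operatorname*{colim}_n\mathcal A_n,\qquad \mathcal A_n:=\mathcal L_{N_n}(H_n;x_{N_n})^c .
\]
Now I apply Lemma~\ref{lem7.8} with $\mathcal B_n:=\mathcal L_{C_n}(H_n;y_n)^c$, with transition functors given by inflation along $\alpha_n$ and $\beta_n$, and with the functors
\[
R_n:=e\otimes j_n^*=e\otimes\operatorname{Res}^{N_n}_{C_n},\qquad I_n:=e\otimes\varpi_n^*=e\otimes\operatorname{Infl}.
\]
Here $e$ is the relevant stratum idempotent. Since $\varpi_n$ maps $H_n$ isomorphically onto $H_n$ and both Weyl groups are $C_n/H_n\simeq C/H$, both functors should preserve strata, and the selected points should correspond to each other. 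The relation $\varpi_nj_n=\mathrm{id}$ gives $R_nI_n\simeq\mathrm{id}$. The identity $\alpha_{n,n+r}=j_n\beta_{n,n+r}\varpi_{n+r}$ from Lemma~\ref{lem7.7}(2) gives $F_{n,n+r}\simeq I_{n+r}G_{n,n+r}R_n$. The other two relations in Lemma~\ref{lem7.7}(2) give the commutation of $R_n$ and $I_n$ with the transition functors. Lemma~\ref{lem7.8} then gives $\operatorname*{colim}\mathcal A_n\simeq\operatorname*{colim}\mathcal B_n\simeq\mathcal L_C(H;y)^c$.

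Next I check that the composite of all these identifications is induced by $e_C(H;y)\otimes\operatorname{Res}^Q_C$. This holds because at each finite stage the maps are restriction along $C_n\to N_n\to G_n$, which is the image of $C\hookrightarrow Q$. By Lemma~\ref{lem6.4}, both $\mathcal L_Q(H;x_Q)$ and $\mathcal L_C(H;y)$ are compactly generated, and restriction preserves coproducts. So an equivalence on compacts extends to the big categories. Composing with $\rho_{P,AC}^{H,x}$ gives $\Phi_H=\operatorname{Res}^{AC}_C\circ\rho^{H,x}_{P,AC}$.

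For $\Phi_H(q_x(g_x))\simeq q_y(g_y)$, I use Balmer--Favi base change \cite[Theorem~6.3]{BalmerFavi} for the geometric functor $\operatorname{Res}^P_C$ together with tensoring by the stratum idempotents. The inverse image of $\{x\}$ in $Z_C(H;y)$ is the single point $y$, because the Weyl groups $N_P(H)/H$ and $N_C(H)/H$ coincide. Hence $e\otimes g_{\rho^{-1}(x)}\simeq g_y$, as in the proof of Proposition~\ref{prop6.13}.

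I expect the main obstacle to be the verification of the hypotheses of Lemma~\ref{lem7.8} on compact strata. Specifically, I must check three things. First, $I_n$ genuinely lands in the $H_n$-stratum at the correct stalk; the stalk points $x_{N_n}$ and $y_n$ must correspond under $\operatorname{Spc}(\varpi_n^*)$. Second, the isomorphisms are transition-compatible. Third, the resulting colimit functor agrees with $\operatorname{Res}^Q_C$ rather than some twisted variant. I would first try to handle the stratum and stalk compatibility via Lemma~\ref{lem6.10} and the support formula for inflation, checking subgroup parameters and Weyl groups directly for the metacyclic groups $N_n=(p^{s_n-e}A/p^{s_n}A)\rtimes C_n$.
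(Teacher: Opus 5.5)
Your proposal is correct and follows essentially the same route as the paper. You use Theorem~\ref{thm6.16} for the open step $AC\le_o P$, then the Lemma~\ref{lem7.7} tower with Proposition~\ref{prop6.7}/Remark~\ref{rem6.8}, the finite-stage equivalences of Proposition~\ref{prop6.13}, Lemma~\ref{lem7.8} with $R_n=j_n^*$ and $I_n=\varpi_n^*$, and finally you identify the colimit functor with $\operatorname{Res}^{AC}_C$ and compute fibers for the idempotents; the only differences are cosmetic. The paper explicitly discards finitely many $U_n$ so that $U_n\le N_0$, which Remark~\ref{rem6.8} needs, and it checks $\Phi_H(q_x(g_x))\simeq q_y(g_y)$ in two steps via the fibers of $f_{AC}$ and $f_C$ rather than in one step along $\operatorname{Res}^P_C$.
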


\begin{proof}
By Lemma~\ref{lem4.3},
\(C\simeq\mathbb Z_p\), and there is an integer \(d\geq0\) such that
\(H=p^dC\). Since \(\pi(H)\) is a nonzero closed subgroup of
\(B\simeq\mathbb Z_p\), the subgroup \(\pi(C)\) is open in \(B\), so
\(
 AC=\pi^{-1}(\pi(C))\leq_oP.
\)
Moreover, \(H\cap A=1\) and \(H=p^dC\) imply \(C\cap A=1\). Thus we have
\(
 AC=A\rtimes_{\chi_C}C\)
where \(
 \chi_C:=\chi\circ\pi|_C:C\longrightarrow\mathbb Z_p^\times.
\)
Both \(\pi|_C\) and \(\chi\) are injective, hence so is \(\chi_C\).
In particular, \(AC\) satisfies the hypotheses of
Lemma~\ref{lem7.7}.

Since \(N_P(H)=C\leq AC\),
Theorem~\ref{thm6.16} gives a tensor triangulated
equivalence \[\rho_{P,AC}^{H,x}:\mathcal L_P(H;x)
 \xrightarrow{\sim}
 \mathcal L_{AC}(H;x_{AC}) .\] Let
\(
 f_{AC}:X_{AC}\longrightarrow
        X_P
\)
be induced by restriction. Then we have
\(
 f_{AC}(x_{AC})=x\) and \(
 f_{AC}^{-1}(\{x\})\cap Z_{AC}(H;x_{AC})=\{x_{AC}\}.
\)
Thus
\(
 \rho_{P,AC}^{H,x}\bigl(q_x(g_x)\bigr)
 \simeq q_{x_{AC}}(g_{x_{AC}}).
\)

It remains to prove that the  restriction functor
\[
 \operatorname{Res}_C^{AC}:
 \mathscr T(AC)\longrightarrow\mathscr T(C)
\]
induces a geometric equivalence
\(
 \mathcal L_{AC}(H;x_{AC})
 \xrightarrow{\sim}
 \mathcal L_C(H;y).
\)
 Choose a compact object
\(c_H^{AC}\in\mathscr K(AC)\), inflated from \(AC/N_0\) as in
Lemma~\ref{lem6.3}. Applying Lemma~\ref{lem7.7} and discarding finitely many \(U_n\),
 we may assume that
\(
 U_n\leq N_0
\)
for every \(n\), since \((U_n)_n\) is cofinal and descending and
\(N_0\trianglelefteq_o AC\).
Retain the notation
\[
 G_n=AC/U_n,
 \quad H_n=HU_n/U_n,
 \quad C_n=C/p^nC,
 \quad N_n=N_{G_n}(H_n)
\]
and the maps \(\alpha_n,\beta_n,j_n,\varpi_n\) from that
lemma and let
\(
 \delta_n:G_{n+1}\twoheadrightarrow G_n
\)
be the quotient map.

Let \(\bar x_n\) and \(y_n\) be the images (via inflations) of \(x_{AC}\)
and \(y\), respectively, and set
\(
 z_n=\operatorname{Spc}(j_n^*)(y_n).
\)
Since \(N_n=N_{G_n}(H_n)\) and \(C_n/H_n\simeq C/H\), we have
\[
 \operatorname{Spc}\!\left(\operatorname{Res}_{N_n}^{G_n}\right)
       (z_n)=\bar x_n.
\]
The points are compatible with the induced maps:
\[
 \operatorname{Spc}(\delta_n^*)(\bar x_{n+1})=\bar x_n,
 \qquad
 \operatorname{Spc}(\beta_n^*)(y_{n+1})=y_n,
 \qquad
 \operatorname{Spc}(\alpha_n^*)(z_{n+1})=z_n.
\]
Since \(U_n\leq N_0\), the object \(c_H^{AC}\) has compatible
 representatives in \(\mathscr K(G_n)\). By
Remark~\ref{rem6.8}, their images in the \(\bar x_n\)-stalk have
support \(Z_{G_n}(H_n;\bar x_n)\), and hence compactly generate
\(\mathcal L_{G_n}(H_n;\bar x_n)\).
By \cite[Remark~7.6]{BalmerGallauerGeometry}, we have the following:
\[
\begin{aligned}
 &\operatorname{Spc}(\delta_n^*)^{-1}
       \bigl(Z_{G_n}(H_n;\bar x_n)\bigr)
   \cap \operatorname{gen}_{X_{G_{n+1}}}(\bar x_{n+1})
   =Z_{G_{n+1}}(H_{n+1};\bar x_{n+1}),\\
 &\operatorname{Spc}(\alpha_n^*)^{-1}
       \bigl(Z_{N_n}(H_n;z_n)\bigr)
   \cap \operatorname{gen}_{X_{N_{n+1}}}(z_{n+1})
   =Z_{N_{n+1}}(H_{n+1};z_{n+1}),\\
 &\operatorname{Spc}(\beta_n^*)^{-1}
       \bigl(Z_{C_n}(H_n;y_n)\bigr)
   \cap \operatorname{gen}_{X_{C_{n+1}}}(y_{n+1})
   =Z_{C_{n+1}}(H_{n+1};y_{n+1}),\\
 &\operatorname{Spc}(j_n^*)^{-1}
       \bigl(Z_{N_n}(H_n;z_n)\bigr)
   \cap \operatorname{gen}_{X_{C_n}}(y_n)
   =Z_{C_n}(H_n;y_n),\\
 &\operatorname{Spc}(\varpi_n^*)^{-1}
       \bigl(Z_{C_n}(H_n;y_n)\bigr)
   \cap \operatorname{gen}_{X_{N_n}}(z_n)
   =Z_{N_n}(H_n;z_n).
\end{aligned}
\]
By Proposition~\ref{prop6.13}, we have a
tensor-triangulated equivalence
\[
 R_{G_n,N_n}^{H_n,\bar x_n}:
 \mathcal L_{G_n}(H_n;\bar x_n)
 \xrightarrow{\sim}
 \mathcal L_{N_n}(H_n;z_n).
\]
The identity
\(
 R_{G_{n+1},N_{n+1}}^{H_{n+1},\bar x_{n+1}}
 \delta_n^*
 \simeq
 \alpha_n^*
 R_{G_n,N_n}^{H_n,\bar x_n}
\)
and the support identities above make these equivalences a
compatible morphism of the two filtered diagrams.

On the other hand, we have
\[
\begin{aligned}
 j_{n+1}^*\alpha_n^*&\simeq\beta_n^*j_n^*,&
 \alpha_n^*\varpi_n^*&\simeq\varpi_{n+1}^*\beta_n^*,\\
 j_n^*\varpi_n^*&\simeq1,&
 \alpha_{n,n+r}^*&\simeq
 \varpi_{n+r}^*\beta_{n,n+r}^*j_n^*.
\end{aligned}
\]
Then Lemma~\ref{lem7.8}, together with the integer \(r\) supplied by Lemma~\ref{lem7.7}, implies
\[
 \operatorname*{colim}_n(\mathcal L_{N_n}(H_n;z_n)^c)
 \simeq
\operatorname*{colim}_n(\mathcal L_{C_n}(H_n;y_n)^c).
\]
By Proposition~\ref{prop6.7},
we have
\[
 \mathcal L_{AC}(H;x_{AC})^c
 \simeq
\operatorname*{colim}_n(\mathcal L_{G_n}(H_n;\bar x_n)^c)
 \simeq
\operatorname*{colim}_n(\mathcal L_{N_n}(H_n;z_n)^c).\]
On the other hand,  \(N_C(H)/H=N_{AC}(H)/H=C/H\) shows
\(
 f_C(y)=x_{AC},
\)
 and
\[
\operatorname{supp}_{X_C}
       \bigl(\operatorname{Res}_C^{AC}(c_H^{AC})\bigr)
       \cap\operatorname{gen}_{X_C}(y)=
f_C^{-1}\bigl(\operatorname{supp}_{X_{AC}}(c_H^{AC})\bigr)
       \cap\operatorname{gen}_{X_C}(y)
 =Z_C(H;y),
\]where \(f_C:X_C\to X_{AC}\) is induced by restriction.  For the last equality, a generalization
\(\mathcal P_C(K,\mathfrak s)\) of \(y\)
belongs to the inverse image precisely when
\([K]_{AC}=[H]_{AC}\). Then \(K\) and \(H\) are \(AC\)-conjugate; applying
the projection \(AC=A\rtimes C\twoheadrightarrow C\), which is the
identity on \(C\), gives \(K=H\). The converse is immediate.
Hence
\(
 \overline{\operatorname{Res}}_C^{AC}\bigl(e_{AC}(H;x_{AC})\bigr)
 \simeq e_C(H;y).
\)
Here the bar denotes the functor induced on the corresponding stalks.
Thus restriction induces a tensor-triangulated functor from \(
 \mathcal L_{AC}(H;x_{AC})
\) to \(\mathcal L_C(H;y)\)
, by which we still denote \(\operatorname{Res}_C^{AC}\).
Since
\(
 C\simeq\varprojlim_n C_n
\)
and the  maps
\(\beta_n:C_{n+1}\twoheadrightarrow C_n\) are surjective,
Proposition~\ref{prop6.7} and
Remark~\ref{rem6.8} imply that
\[
 \operatorname*{colim}_n
 \mathcal L_{C_n}(H_n;y_n)^c
 \xrightarrow{\sim}
 \mathcal L_C(H;y)^c.
\]
The strictly commutative square formed by the quotient maps and the
inclusions \(C\leq AC\) and \(C_n\leq G_n\) gives natural
isomorphisms
\[
 \operatorname{Res}_C^{AC}\operatorname{Infl}_{G_n}^{AC}
 \simeq
 \operatorname{Infl}_{C_n}^{C}
 \operatorname{Res}_{C_n}^{G_n}.
\]
These isomorphisms are compatible with the transition maps. Hence,
under the equivalences
\[
 \mathcal L_{AC}(H;x_{AC})^c
 \simeq
 \operatorname*{colim}_n
 \mathcal L_{G_n}(H_n;\bar x_n)^c
\]
and
\[
 \operatorname*{colim}_n
 \mathcal L_{C_n}(H_n;y_n)^c
 \simeq
 \mathcal L_C(H;y)^c,
\]
the functor induced by
\[
 \left(
 j_n^*\circ R_{G_n,N_n}^{H_n,\bar x_n}
 \right)^c:
 \mathcal L_{G_n}(H_n;\bar x_n)^c
 \longrightarrow
 \mathcal L_{C_n}(H_n;y_n)^c
\]
identifies with
\[
 \left(\operatorname{Res}_C^{AC}\right)^c:
 \mathcal L_{AC}(H;x_{AC})^c
 \longrightarrow
 \mathcal L_C(H;y)^c.
\]
The former is an equivalence by the preceding equivalences.
Therefore \(\left(\operatorname{Res}_C^{AC}\right)^c\) is an
equivalence, which implies the desired (tensor-triangulated) equivalence.

Finally, the fiber of \(f_C:X_C\to X_{AC}\) over \(x_{AC}\) is the singleton
\(\{y\}\). Indeed, if \(\mathcal P_C(K,\mathfrak s)\) maps to \(x_{AC}\),
then \(K\) is \(AC\)-conjugate to \(H\). Projection to \(C\) gives \(K=H\), and
so \(\mathfrak s=\mathfrak q\). Consequently we have
\(
 \operatorname{Res}_C^{AC}\bigl(q_{x_{AC}}(g_{x_{AC}})\bigr)
 \simeq q_y(g_y).
\)
This concludes the proof.
\end{proof}

\begin{proposition}\label{prop7.10}
Let \(P\) be a \(p\)-decomposition group and
\(x=\mathcal P_P(H,\mathfrak q)\in X_P\). If \(H\) is nonopen and
\(H\not\leq A\), then \(\mathscr T(P)\) satisfies minimality at
\(x\).
\end{proposition}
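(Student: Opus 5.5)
The plan is to reduce minimality at \(x\) to minimality at the corresponding point of the procyclic group \(C=N_P(H)\simeq\mathbb Z_p\), via the equivalence \(\Phi_H\) of Proposition~\ref{prop7.9}.

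First, we place ourselves in the setting of Corollary~\ref{cor6.5}. By Lemma~\ref{lem4.3}, \(H\) is procyclic and hence topologically finitely generated. Lemma~\ref{lem6.3} therefore shows that \(Z_P(H;x)\) is the support of the compact object \(q_x(c_H)\) in the stalk. The point \(x\) is weakly visible because \(X_P\) is weakly noetherian (Proposition~\ref{prop4.5}), so Lemma~\ref{lem6.4} applies. Corollary~\ref{cor6.5} then says that minimality of \(\mathscr T(P)\) at \(x\) is equivalent to minimality of \(\operatorname{Loc}^{\otimes}_{\mathcal L_P(H;x)}(q_x(g_x))\) inside \(\mathcal L_P(H;x)\).

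Second, we transport this statement along \(\Phi_H\). Since \(\Phi_H\) is a tensor-triangulated equivalence, it preserves coproducts (being an equivalence) and carries localizing ideals bijectively to localizing ideals. It sends \(q_x(g_x)\) to \(q_y(g_y)\), so the question becomes whether \(\operatorname{Loc}^{\otimes}_{\mathcal L_C(H;y)}(q_y(g_y))\) is minimal in \(\mathcal L_C(H;y)\).

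Third, we apply Corollary~\ref{cor6.5} again, now for the group \(C\) and the point \(y=\mathcal P_C(H,\mathfrak q)\). Its hypotheses hold: \(H=p^dC\) is open in \(C\), Lemma~\ref{lem6.3} applies to \(C\), and \(X_C\) is weakly noetherian. Indeed, \(\mathscr T(C)=\mathscr T(\mathbb Z_p)\) is stratified by \cite[Theorem~6.7]{BalmerGallauerArtin}, and that stratification presupposes a weakly noetherian spectrum. The corollary identifies the condition with minimality of \(\mathscr T(C)\) at \(y\). Since \(\mathscr T(C)\) is stratified, it satisfies minimality at every point, and in particular at \(y\). Alternatively, because \(H\) is open in \(C\), one can argue as in Proposition~\ref{prop7.5}, reducing via modular fixed points to a finite cyclic group.

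The main obstacle is justifying that \(\Phi_H\) preserves the relevant localizing ideals. The equivalence \(\Phi_H\) is a priori only an equivalence of the categories \(\mathcal L\) as tensor categories, each carrying its own unit \(e\). Since a tensor equivalence preserves the tensor product and an equivalence preserves all coproducts and triangles, localizing ideals correspond, and this should be straightforward once spelled out. A second point needing care is the weak-visibility hypothesis at \(y\) in \(X_C\). Here we will cite the fact that \(X_{\mathbb Z_p}\) is weakly noetherian, which is part of the stratification theorem of Balmer--Gallauer.
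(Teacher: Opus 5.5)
Your proposal is correct and follows essentially the same route as the paper: stratification of \(\mathscr T(C)\) for \(C\simeq\mathbb Z_p\) gives minimality at \(y\), Corollary~\ref{cor6.5} turns this into minimality inside \(\mathcal L_C(H;y)\), and this is transported along the equivalence \(\Phi_H\) of Proposition~\ref{prop7.9} and then back to \(x\) by Corollary~\ref{cor6.5}. Your explicit checks of the hypotheses of Lemma~\ref{lem6.4}, namely that \(H\) is topologically finitely generated and that \(x\) and \(y\) are weakly visible, are details the paper leaves implicit.
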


\begin{proof}
Let
\(
 C=N_P(H)\)
and \(
 y=\mathcal P_C(H,\mathfrak q).
\)
Since \(C\simeq\mathbb Z_p\),
\cite[Theorem~6.7]{BalmerGallauerArtin} shows that
\(\mathscr T(C)\) is stratified. In particular, \(y\) is weakly
visible and \(\mathscr T(C)\) satisfies minimality at \(y\).
Corollary~\ref{cor6.5} therefore implies
that
\(
 \operatorname{Loc}^{\otimes}_{\mathcal L_C(H;y)}
       \bigl(q_y(g_y)\bigr)
\)
is minimal in \(\mathcal L_C(H;y)\). Then the conclusion follows from
Proposition~\ref{prop7.9} and
Corollary~\ref{cor6.5} immediately.
\end{proof}

\begin{theorem}
\label{thm7.11}
For every \(p\)-decomposition group \(P\), the category
\(\mathscr T(P)\) is stratified.
\end{theorem}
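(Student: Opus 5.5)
The plan is to verify the two halves of the stratification criterion of Barthel--Heard--Sanders, in the non-noetherian form of Zou \cite[Theorem~A]{Zou}: the local-to-global principle together with minimality at every point of \(X_P\), on the hypothesis that \(X_P\) is weakly noetherian. Weak noetherianity is Proposition~\ref{prop4.5}, which ensures that the idempotents \(g_x\) are defined for every \(x\in X_P\). The local-to-global principle is Theorem~\ref{thm4.7}, obtained from the Hochster weak scatteredness of \(X_P\) through \cite[Theorem~7.6]{Zou}.

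What remains is pointwise minimality. Let \(x=\mathcal P_P(H,\mathfrak q)\in X_P\) with \(H\) a closed pro-\(p\) subgroup of \(P\), which is every closed subgroup since \(P\) is pro-\(p\). By the proof of Lemma~\ref{lem4.3}, \(H\) falls into exactly one of four cases.
\begin{enumerate}
\item \(H=1\).
\item \(H\) is open.
\item \(H\) is a nontrivial nonopen subgroup contained in \(A\), hence \(H=p^rA\) for some \(r\geq0\).
\item \(H\) is nonopen, procyclic and not contained in \(A\).
\end{enumerate}
These cases are exhaustive: a nontrivial nonopen \(H\) is procyclic, and either \(H\leq A\) or \(H\not\leq A\). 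Minimality at \(x\) in the four cases is Proposition~\ref{prop7.1}, Proposition~\ref{prop7.5}, Proposition~\ref{prop7.6} and Proposition~\ref{prop7.10}, respectively. Minimality is a property of the point \(x\), not of the chosen representative, so conjugating \(H\) does not matter.

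Combining the local-to-global principle with minimality at all points gives stratification. The Hochster dual weakly scattered hypothesis is the one Zou's theorem needs in order to remove the noetherian assumption.

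No step is a genuine obstacle, since all the hard work sits in the cited propositions. The only point to check is that the case split is complete and that every case result is stated for an arbitrary \(\mathfrak q\in\mathcal V_{W_P(H)}\), so that every point of \(X_P\) is covered. For open \(H\) the Weyl group is finite and \(\mathcal V_{W_P(H)}\) may have several points; Proposition~\ref{prop7.5} handles an arbitrary \(\mathfrak q\), so nothing further is needed.
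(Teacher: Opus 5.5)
Your proposal is correct and follows the paper's proof essentially verbatim. The paper likewise combines weak noetherianity from Proposition~\ref{prop4.5}, the local-to-global principle from Theorem~\ref{thm4.7}, and minimality at every point obtained from the four-way case split of Lemma~\ref{lem4.3}, with the cases handled by Propositions~\ref{prop7.1}, \ref{prop7.5}, \ref{prop7.6} and~\ref{prop7.10}.
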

\begin{proof}
Its compact spectrum is weakly noetherian by Proposition
\ref{prop4.5}, and Theorem
\ref{thm4.7} proves local-to-global. It follows from  Lemma~\ref{lem4.3}, Proposition~\ref{prop7.1},
Proposition~\ref{prop7.5},
Proposition~\ref{prop7.6}, and
Proposition~\ref{prop7.10} that  \(\mathscr T(P)\) satisfies minimality at every prime ideal. Thus \(\mathscr T(P)\) is stratified.
\end{proof}

\section{Artin motives over local fields}\label{sec:descent}

In this section we pass from \(p\)-decomposition groups to absolute Galois groups of local fields. When the residue characteristic differs
from \(p\), we transfer stratification from a \(p\)-Sylow subgroup to \(G_F\) using prime-to-\(p\) reduction and finite descent. Combined
with the wild obstruction of Section~\ref{sec:wild}, this classifies the local fields \(F\) for which \(\operatorname{DAM}(F;k)\) is
stratified. Finally, we deduce the telescope conjecture in the tame case.

\begin{lemma}\label{lem8.1}
Let a finite group \(D\) act by spectral automorphisms on a weakly
noetherian spectral space \(X\). Assume that the topological orbit
space \(X/D\) is spectral and the orbit map is spectral. Then
\(X/D\) is weakly noetherian and every  fiber is discrete.
\end{lemma}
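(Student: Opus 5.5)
The plan is to prove the two assertions separately, starting with discreteness of fibers because it feeds into weak noetherianity. Write \(\pi\colon X\to X/D\) for the orbit map. The fiber over \(\pi(x)\) is the orbit \(Dx\), which is a finite set. The key preliminary observation is that no two distinct points of an orbit are comparable under specialization. Suppose \(gx\in\overline{\{x\}}\) for some \(g\in D\). Since \(g\) is a homeomorphism, we get \(g^{i+1}x\in\overline{\{g^ix\}}\) for every \(i\), so
\[
 x=g^{|g|}x\in\overline{\{gx\}}\subseteq\overline{\{x\}}.
\]
Hence \(x\) and \(gx\) specialize to each other, and \(T_0\)-ness of the spectral space \(X\) forces \(gx=x\). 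Consequently, in the subspace topology on the finite set \(Dx\), each point is closed: \(\overline{\{y\}}\cap Dx=\{y\}\). A finite \(T_1\) space is discrete, so every fiber is discrete.

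For weak noetherianity, recall that a point \(z\) is weakly visible if \(\{z\}=V\cap W^c\) with \(V,W\) Thomason. Fix \(z=\pi(x)\). By hypothesis \(x\) is weakly visible, so write \(\{x\}=V\cap W^c\). First I would make this presentation \(D\)-invariant in the appropriate sense. Set \(V'=\bigcup_{g\in D}gV\), which is Thomason and \(D\)-stable. Next choose a Thomason \(W'\) with \(W'\cap V'\) excluding exactly the orbit complement. Concretely, the closure of \(Dx\) is \(\bigcup_g\overline{\{gx\}}\), which is Thomason up to intersection with \(V'\); alternatively, put \(W'=\bigcup_g gW\cap\bigcap\) over \(\ldots\). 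The simplest route is the following. Since \(Dx\) is finite and each \(gx\) is weakly visible via \(gV\cap(gW)^c\), the set \(Dx\) equals \(\bigcup_g(gV\cap (gW)^c)\). I would aim to show that this equals \(V''\cap W''^c\) with \(V''=\bigcup_g\overline{\{gx\}}\)-type Thomason sets. The cleaner way is to take
\[
 V'':=\bigcup_g \bigl(gV\cap \overline{\{gx\}}\bigr),
\]
which is Thomason since \(\overline{\{gx\}}\cap gV\) is a closed subset of a Thomason set containing generic point \(gx\) and lies in the Thomason set \(gV\). Then take \(W'':=\bigcup_g gW\cap V''\), which is again Thomason, and check that \(V''\setminus W''=Dx\) using the incomparability above.

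With a \(D\)-stable presentation \(Dx=V''\cap W''^c\) in hand, I would descend it. Here the spectral hypotheses enter: for a \(D\)-stable Thomason \(T\), the image \(\pi(T)\) is Thomason in \(X/D\). The reason is that \(\pi\) is open, being a quotient by a finite group, so it sends quasi-compact opens to quasi-compact opens, and complements of \(D\)-stable sets map to complements. Then
\[
 \{z\}=\pi(V'')\cap\pi(W'')^c,
\]
using \(D\)-stability of \(W''\) to get \(\pi^{-1}\pi(W'')=W''\).

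The main obstacle is constructing a \(D\)-stable Thomason presentation of the orbit \(Dx\), that is, verifying that the candidate \(V''\) is genuinely Thomason and that \(V''\setminus W''\) contains no extra points. If that step fails, I would fall back on the characterization of weak visibility as \(\overline{\{z\}}\cap\) (closed under generalization, Thomason complement) and on the fact that \(\operatorname{gen}(z)\) is the image of \(\bigcup_g\operatorname{gen}(gx)\).
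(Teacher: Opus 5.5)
\textbf{Verdict: there is a genuine gap, in the weak-noetherian part.} Your fiber argument is correct and is the same as the paper's: the cycle of specializations \(x, gx, g^2x,\dots\) together with \(T_0\) shows that distinct points of an orbit are incomparable, so each finite orbit is \(T_1\) and hence discrete. The gap sits exactly at the step you flagged, and your concrete construction fails for two reasons.

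\emph{First, \(V''\) need not be Thomason.} Thomason sets are closed under specialization, so \(\overline{\{gx\}}\subseteq gV\), and your \(V''\) is simply \(\overline{Dx}=\bigcup_g\overline{\{gx\}}\). Being a closed subset of a Thomason set does not make a set Thomason. Each \(gx\in V''\) would have to lie in a closed \(Z\subseteq V''\) with quasi-compact complement, so \(V''\) is Thomason only if \(X\setminus\overline{Dx}\) is quasi-compact. This fails in general. In the Stone space \(X=\mathbb N\cup\{\infty\}\), the point \(\infty\) is weakly visible, since \(\{\infty\}=X\setminus\mathbb N\) and \(\mathbb N\) is a union of clopen points. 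But \(\{\infty\}\) is not Thomason, because \(\mathbb N\) is not quasi-compact.

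\emph{Second, \(V''\setminus W''\) can be empty.} Even granting \(V''\), the given \(W\) must contain every orbit point \(gx\neq x\) that lies in \(V\) (for instance if \(V=X\)). Then \(\bigcup_h hW\supseteq Dx\), so \(V''\setminus W''=\varnothing\). Incomparability alone does not rule this out, and your fallback is not carried out.

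\textbf{How to repair it.} Normalize the presentation before saturating. For each \(y\in Dx\setminus\{x\}\), incomparability gives \(y\notin\overline{\{x\}}\), hence a quasi-compact open \(U_y\ni y\) with \(x\notin U_y\). Replace \(V\) by \(V\cap\bigcap_y(X\setminus U_y)\), and then replace \(W\) by \(W\cap V\). Now \(V\cap Dx=\{x\}\) and \(W\cap Dx=\varnothing\). The plain saturations \(V'=\bigcup_g gV\) and \(W'=\bigcup_g gW\) then satisfy \(V'\setminus W'\subseteq\bigcup_g g(V\setminus W)=Dx\), with equality because no \(gx\) lies in \(W'\).

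Your descent step then goes through, once you write a \(D\)-stable Thomason set as a union of the \(D\)-stable closed sets \(\bigcup_g gZ\). Their complements \(\bigcap_g g(X\setminus Z)\) are quasi-compact open and \(D\)-stable, and they map under the open map \(\pi\) to quasi-compact opens.

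\textbf{Comparison with the paper.} The paper avoids this bookkeeping by passing to Hochster duals. It identifies \((X/D)^*\simeq X^*/D\) and uses that weak visibility means \(\{x\}\) is locally closed in \(X^*\). It then writes the orbit as \(C\setminus B\), where \(C=\bigcup_{y\in Dx}\operatorname{gen}_X(y)\) and \(B=C\setminus Dx\) are \(D\)-invariant and closed in \(X^*\). Translated back to \(X\), it uses generalization sets rather than closures, and that is exactly what makes the complements Thomason.
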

\begin{proof}
Write \(\pi:X\to X/D\) for the orbit map.
By the assumptions, quasi-compact open subsets of \(X/D\) correspond exactly to
\(D\)-invariant quasi-compact open subsets of \(X\). By taking complements, there is
a canonical homeomorphism \((X/D)^*\simeq X^*/D.\)

Fix a \(D\)-orbit \(O\). For distinct \(x,y\in O\),  in  \(X^*\)  one has
\(
 x\notin\overline{\{y\}}\) and
 \(y\notin\overline{\{x\}}.\)
Indeed, writing \(y=dx\) and \(d^r=1\), the assumption
\(x\in\overline{\{dx\}}\) would imply
\[
 x\in\overline{\{dx\}},\quad
 dx\in\overline{\{d^2x\}},\quad\ldots,\quad
 d^{r-1}x\in\overline{\{x\}}.
\]
Hence \(dx\in\overline{\{x\}}\), so
\(\overline{\{x\}}=\overline{\{dx\}}\). Since \(X^*\) is \(T_0\),
this forces \(x=dx=y\), a contradiction. For each \(x\in O\), the local
closedness of \(\{x\}\) in \(X^*\) makes \(\overline{\{x\}}\setminus\{x\}
\) closed in \(X^*\). Hence
\(
 B:=\bigcup_{x\in O}\overline{\{x\}}\setminus\{x\}\) and
 \(C:=\bigcup_{x\in O}\overline{\{x\}}\)
are invariant closed subsets, and so \(O=C\setminus B\) is invariant locally
closed. Write
\(
 \pi^*:X^*\longrightarrow X^*/D\simeq (X/D)^*
\)
for the orbit map on the Hochster dual. Since
\(
 (\pi^*)^{-1}\pi^*(B)=B,\)
 \((\pi^*)^{-1}\pi^*(C)=C,
\)
we have
\(
 \pi^*(C\setminus B)=\pi^*(C)\setminus\pi^*(B)
\). The orbit map is closed, so \(\pi^*(B)\) and \(\pi^*(C)\) are closed,
and thus \(\pi^*(O)=\pi^*(C)\setminus\pi^*(B)\) is a locally closed singleton in
\(X^*/D\). This proves that \(X/D\) is weakly noetherian.

The same argument in the original topology shows that, for distinct
points \(x,y\) in the same \(D\)-orbit,
\(x\notin\overline{\{y\}}.\)
Thus there exists an open subset
\(U_{x,y}\subseteq X\) such that
\(
 x\in U_{x,y},\)
\(
 y\notin U_{x,y}.
\)
The subset
\(
 U_x:=
 \bigcap_{y\in D\cdot x\setminus\{x\}}U_{x,y}
\)
is open in \(X\), and it satisfies
\(
 U_x\cap(D\cdot x)=\{x\}.
\)
Thus every point of \(D\cdot x=\pi^{-1}(\pi(x))\) is open in the
subspace topology, so every fiber of \(\pi\) is discrete.
\end{proof}

\begin{lemma}\label{lem8.2}
Let a finite group \(D\) act continuously on a generically noetherian
space \(X\). Then \(X/D\) is generically noetherian.
\end{lemma}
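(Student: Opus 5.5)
The plan is to reduce to the fact that the generalization set of an orbit in \(X/D\) is the continuous image of a finite union of generalization sets in \(X\). Write \(\pi:X\to X/D\) for the orbit map. It is continuous, surjective and open. Openness holds because for \(U\) open, \(\pi^{-1}\pi(U)=\bigcup_{d\in D}dU\) is open. Here "generically noetherian" means that for every point \(\bar x\), the subspace \(\operatorname{gen}_{X/D}(\bar x)\) is noetherian.

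First, fix \(\bar x=\pi(x)\). I claim that
\[
 \operatorname{gen}_{X/D}(\bar x)=\pi\Bigl(\bigcup_{d\in D}\operatorname{gen}_X(dx)\Bigr).
\]
The inclusion \(\supseteq\) follows from continuity of \(\pi\): continuous maps preserve specialization, so if \(dx\in\overline{\{y\}}\) then \(\bar x\in\overline{\{\pi(y)\}}\).

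For \(\subseteq\), let \(\bar y=\pi(y)\) with \(\bar x\in\overline{\{\bar y\}}\). Because \(\pi\) is open, \(\pi^{-1}(\overline{\{\bar y\}})=\overline{\pi^{-1}(\bar y)}=\overline{D y}=\bigcup_d\overline{\{dy\}}\). The set \(Dy\) is finite, so its closure is the union of the point closures. Hence \(x\in\overline{\{dy\}}\) for some \(d\). Equivalently \(d^{-1}x\in\overline{\{y\}}\), that is, \(y\in\operatorname{gen}_X(d^{-1}x)\). This step, using openness of the orbit map to identify the preimage of a closure, is the only real point. Openness is exactly what makes the finite-group quotient behave well here.

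Second, each \(\operatorname{gen}_X(dx)\) is noetherian by hypothesis. A finite union of noetherian subspaces is noetherian: an ascending chain of open subsets of the union restricts to chains in each piece, and each of these stabilizes. The continuous image of a noetherian space is noetherian, since an ascending chain of opens in the image pulls back to an ascending chain of opens in the source, and this chain stabilizes by surjectivity onto the image. Hence \(\operatorname{gen}_{X/D}(\bar x)\) is noetherian.

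One must check that the subspace topology on \(\operatorname{gen}_{X/D}(\bar x)\) is compatible with the map from \(\bigcup_d\operatorname{gen}_X(dx)\) with its subspace topology. This is fine, because \(\pi\) restricted to a subspace onto its image is continuous for the subspace topologies. No spectral hypothesis is needed.
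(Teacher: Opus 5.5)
Your proof is correct and is essentially the paper's argument. Both establish \(\operatorname{gen}_{X/D}(\pi(x))=\pi\bigl(\bigcup_{d\in D}\operatorname{gen}_X(dx)\bigr)\) and then use that finite unions and continuous images of noetherian subspaces are noetherian; you spell this out, the paper leaves it implicit, and you compute \(\pi^{-1}(\overline{\{\pi(y)\}})=\bigcup_{d}\overline{\{dy\}}\) via openness of the orbit map where the paper uses closedness, both of which hold for a finite group acting by homeomorphisms.
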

\begin{proof}
Let \(\pi:X\to X/D\) be the closed orbit map. For every \(x\in X\), one has
\[
 \operatorname{gen}_{X/D}(\pi(x))
   =\pi\!\left(\bigcup_{d\in D}
          \operatorname{gen}_X(d x)\right).
\]
Indeed, \(\pi(z)\) generalizes \(\pi(x)\) if and only if
\[
 \pi(x)\in\overline{\{\pi(z)\}}
 \quad\Longleftrightarrow\quad
 x\in\pi^{-1}\!\left(\overline{\{\pi(z)\}}\right)
   =\bigcup_{d\in D}\overline{\{d z\}},
\]
which, after replacing \(d\) by \(d^{-1}\), is equivalent to
\(z\in\operatorname{gen}_X(d x)\) for some \(d\). Thus \(\operatorname{gen}_{X/D}(\pi(x))\) is noetherian for every \(x\).
\end{proof}

\begin{proposition}\label{prop8.3}
Let \(F\) be a nonarchimedean local field with residue characteristic
\(\ell\neq p\). Then \(\mathscr T(G_F)\) is stratified.
\end{proposition}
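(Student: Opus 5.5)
We will pass from a $p$-Sylow subgroup to $G_F$ in two steps: first a prime-to-$p$ reduction that reduces to a finite extension $F'$ whose absolute Galois group has a normal $p$-Sylow subgroup, then finite descent along a finite quotient.

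\textbf{Group-theoretic input.} Since $\ell\neq p$, the wild inertia group is pro-$\ell$. A $p$-Sylow subgroup $P$ of $G_F$ therefore maps isomorphically into the tame quotient $\widehat{\mathbb Z}^{(\ell')}(1)\rtimes\widehat{\mathbb Z}$. Its image is $A\rtimes_\chi B$ with $A\simeq\mathbb Z_p(1)$ and $B\simeq\mathbb Z_p$, and $\chi$ is given by the cyclotomic action of Frobenius powers. This $P$ is a $p$-decomposition group in the sense of Definition~\ref{def4.1}. Faithfulness of $\chi$ holds because $q$ has infinite order in $\mathbb Z_p^\times$; the excluded case $p=2$ with inversion action does not occur, since $q^{\mathbb Z_2}$ is infinite. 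Hence $\mathscr T(P)$ is stratified by Theorem~\ref{thm7.11}.

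\textbf{Prime-to-$p$ reduction.} The tame quotient is prosolvable. So there is an open normal subgroup $G'=G_{F'}\trianglelefteq G_F$ such that $G_{F'}$ has a normal $p$-Sylow subgroup and $[G_F:G_{F'}]$ has $p$-part controlled. I would first reduce to $G_{F'}$ with $p\nmid[G_F:G_{F'}]$ by the standard argument. Restriction $\mathscr T(G_F)\to\mathscr T(G_{F'})$ has $k(G_F/G_{F'})$ as the image of the right adjoint on the unit. This object has $\mathbf 1$ as a direct summand because the index is invertible in $k$, so restriction detects localizing ideals. Its spectral map is surjective and is the orbit map for the finite group $D=G_F/G_{F'}$ acting on $X_{G_{F'}}$; this identification requires separability plus the Balmer–Gallauer description of the points. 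Lemmas~\ref{lem8.1} and~\ref{lem8.2} then give that $X_{G_F}$ is weakly noetherian, even generically noetherian. Local-to-global then descends along the faithful, conservative restriction (via \cite{BarthelHeardSanders}-type descent). Minimality at $\pi(x)$ follows from minimality at the discrete fibre points: $\operatorname{Res}(g_{\pi(x)})$ is the finite sum of the $g_{dx}$, and $D$ permutes these summands transitively.

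\textbf{Normal Sylow case.} When $G'=P\rtimes Q'$ with $Q'$ prime-to-$p$ profinite, we use that the relevant $\operatorname{Sub}_p$ is that of $P$. Restriction $\mathscr T(G')\to\mathscr T(P)$ is again separable along a filtered system of prime-to-$p$ open subgroups $PU$. I would treat this as a filtered version of the previous paragraph, using conjugation by $Q'$ acting through finite quotients on each stratum, or more simply replace $F'$ so that $Q'$ acts on $P$ through a finite group.

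\textbf{Main obstacle.} The hardest step is the descent in the previous paragraph when $Q'$ is infinite, namely identifying $X_{G'}$ with $X_P/Q'$ and checking that minimality and local-to-global survive. I would first try to show directly that $G_F$ has an open subgroup of index prime to $p$ that is isomorphic to $P\times(\text{prime-to-}p)$ with trivial action. Then $\mathscr T(P\times Q')\simeq\mathscr T(P)$ holds via inflation, because the prime-to-$p$ part is invisible to $k$-linear permutation modules. This would make the final descent purely finite and covered by Lemmas~\ref{lem8.1}–\ref{lem8.2}.
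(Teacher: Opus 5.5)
\textbf{There is a genuine gap in your middle step: the subgroup you need does not exist.} The open normal subgroup $G_{F'}\trianglelefteq G_F$ with a \emph{normal} $p$-Sylow subgroup never exists. So neither your ``normal Sylow case'' nor the fallback $P\times(\text{prime-to-}p)$ can occur, and your argument stops at what you call the main obstacle.

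To see this, suppose an open $U\leq G_F$ had a normal $p$-Sylow subgroup $P_U$. Then $P_U$ would centralize every normal subgroup of $U$ of pro-order prime to $p$, since $[P_U,N]\leq P_U\cap N=1$. Pass to the tame quotient, where $T'=\prod_{r\neq\ell,p}\mathbb Z_r(1)$ is normal and Frobenius acts on $\mathbb Z_r(1)$ by $q$. For each $k$, a Zsigmondy prime $r$ of $q^{p^k}-1$ satisfies $\operatorname{ord}_r(q)=p^k$. Hence no open subgroup of the $p$-part of Frobenius centralizes an open subgroup of $T'$. (Tame $p$-inertia also acts nontrivially on wild inertia.) Prosolvability gives Hall subgroups, not normal Sylow subgroups. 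A side point: $\mathscr T(P\times Q')\simeq\mathscr T(P)$ is false as an equivalence for nontrivial prime-to-$p$ $Q'$; already for finite $Q'$ one has $\mathscr T(Q')^c\simeq D^b(kQ')\not\simeq D^b(k)$. Only a homeomorphism of spectra and transfer of stratification hold.

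\textbf{The missing idea is the opposite decomposition: the prime-to-$p$ part is normal and the $p$-Sylow subgroup is a complement.}
\begin{enumerate}
\item Let $m=\operatorname{ord}(q\bmod p)$, which is prime to $p$. Let $G_0\trianglelefteq G_F$ be the index-$m$ preimage of $m\widehat{\mathbb Z}$.
\item The kernel $K$ of $G_0\to A\rtimes B$ is an extension of $T'\rtimes C$ by pro-$\ell$ wild inertia. Hence $K$ is normal of pro-order prime to $p$.
\item Schur--Zassenhaus then gives $G_0=K\rtimes P$.
\item A normal subgroup of pro-order prime to $p$ is invisible to the spectrum. Restriction to $P$ induces a homeomorphism on spectra, and stratification transfers from $\mathscr T(P)$ to $\mathscr T(G_0)$ by \cite[Example~3.15, Corollary~5.5]{BalmerGallauerArtin}.
\end{enumerate}
This disposes of the infinite prime-to-$p$ part with no descent along an infinite group.

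Only after that does one descend along the finite index-$m$ extension $G_0\trianglelefteq G_F$. Your sketch of that step matches what the paper does: nil-faithfulness of $k(G_F/G_0)$, the orbit-map description of $X_{G_F}$, Lemma~\ref{lem8.1}, and transfer of local-to-global and minimality. The paper packages this via separable extensions and quasi-finite descent.
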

\begin{proof}
Put \(q=|\kappa_F|\), and write \(I_F^{\mathrm w}\) for the wild
inertia subgroup. By the standard presentation of the maximal tame
quotient
\cite[Theorem~7.5.3]{NeukirchSchmidtWingberg}, there are an arithmetic
Frobenius lift \(\varphi\) and a topological generator \(\tau\) of tame
inertia such that
\[
 G_F/I_F^{\mathrm w}
 \simeq
 \left(\prod_{r\neq\ell}\mathbb Z_r(1)\right)
 \rtimes\widehat{\mathbb Z},
 \qquad
 \varphi\tau\varphi^{-1}=\tau^q.
\]
Write
\(
 \prod_{r\neq\ell}\mathbb Z_r(1)=A\times T',\)
where  \(A=\mathbb Z_p(1)\simeq\mathbb Z_p,\)
 \(T'=\prod_{\substack{r\neq\ell\\r\neq p}}\mathbb Z_r(1).
\) The conjugation action of the unramified quotient on the
\(p\)-primary tame inertia defines a continuous homomorphism
\[
 \theta:\widehat{\mathbb Z}
 \longrightarrow
 \operatorname{Aut}_{\mathrm{cts}}(A)
 \simeq\mathbb Z_p^\times,
 \qquad
 \theta(1)=q.
\]
Let
\(
 m=\operatorname{ord}_{\mathbb F_p^\times}(q\bmod p).
\)
Then \(m\) is prime to \(p\), with \(m=1\) when \(p=2\), and
\(
 \theta(m\widehat{\mathbb Z})
 =\overline{\langle q^m\rangle}
\)
is a pro-\(p\) subgroup of \(\mathbb Z_p^\times\). Indeed, for odd
\(p\) one has \(q^m\in1+p\mathbb Z_p,\)
while \(\mathbb Z_2^\times\) is itself pro-\(2\). Decompose
\(
 m\widehat{\mathbb Z}=B\times C \) where \(B\simeq\mathbb Z_p\) and \(C\) has pro-order prime to \(p\). Since every continuous
homomorphism from a pro-\(p'\) group to a pro-\(p\) group is trivial,
\(C\) acts trivially on \(A\). Hence the inverse image of
\(m\widehat{\mathbb Z}\) in the maximal tame quotient has the form
\[
 \overline G_0
 \simeq
 (T'\rtimes C)\rtimes(A\rtimes B),
\] and the action
\(
 \theta|_B:B\hookrightarrow\mathbb Z_p^\times
\)
is injective. Let \(b\in B\) be the \(B\)-component of
\(m\in m\widehat{\mathbb Z}=B\times C\). Since \(p\nmid m\), the
element \(b\) topologically generates \(B\). Since \(C\) acts
trivially on \(A\), the action of \(b\) on \(A\) is multiplication by
\(q^m\). If the kernel of \(\theta|_B\) is nonzero, it
would be open in \(B\simeq\mathbb Z_p\), and hence the image would be
finite. This would make \(q^m\) a root of unity in
\(\mathbb Z_p^\times\), which is impossible. Thus
\(P_0:=A\rtimes B\)
is a \(p\)-decomposition group.

Let \(G_0\trianglelefteq_oG_F\) be the inverse image of
\(\overline G_0\) and put \(K=\ker(G_0\longrightarrow P_0).\)
Then \([G_F:G_0]=m\) and there is an exact sequence
\[
 1\longrightarrow I_F^{\mathrm w}
 \longrightarrow K
 \longrightarrow T'\rtimes C
 \longrightarrow1.
\]
Since \(I_F^{\mathrm w}\) is pro-\(\ell\) and \(\ell\neq p\), the
group \(K\) has pro-order prime to \(p\). The profinite
Schur--Zassenhaus theorem
\cite[Theorem~2.3.15]{RibesZalesskii} gives a complement \(P\) to
\(K\) in \(G_0\). Consequently, we have
\(
 G_0=K\rtimes P\)
where \(P\simeq P_0.\)
Thus \(P\) is a \(p\)-decomposition group and a \(p\)-Sylow subgroup
of \(G_0\). Since \([G_F:G_0]=m\) is prime to \(p\), it is also a
\(p\)-Sylow subgroup of \(G_F\).

By Theorem~\ref{thm7.11},
\(\mathscr T(P)\) is stratified. Since \(G_0=K\rtimes P\), with
\(K\) of pro-order prime to \(p\), restriction to \(P\) induces a
homeomorphism on spectra by
\cite[Example~3.15]{BalmerGallauerArtin}. Therefore
\cite[Corollary~5.5]{BalmerGallauerArtin} implies that
\(\mathscr T(G_0)\) is stratified.

It remains to descend across the finite normal extension
\(G_0\trianglelefteq G_F\). The tt-ring
\(k(G_F/G_0)\) induces an equivalence
\[
 \mathscr T(G_0)
 \simeq
 \operatorname{Mod}_{k(G_F/G_0)}\bigl(\mathscr T(G_F)\bigr),
\]
under which extension of scalars along \(k(G_F/G_0)\) identifies with
\(\operatorname{Res}_{G_0}^{G_F}\). Since \(m=[G_F:G_0]\) is invertible in \(k\), \(k\) is a retract of
\(k(G_F/G_0)\), and therefore
\[
 k\in
 \operatorname{thick}^{\otimes}\bigl(k(G_F/G_0)\bigr).
\]
Thus \(k(G_F/G_0)\) is nil-faithful, in the sense of \cite[Definition~3.16]{BalmerSeparableExtensions}. Moreover, normality
of \(G_0\) gives
\[
 k(G_F/G_0)\otimes k(G_F/G_0)\simeq\prod_{d\in G_F/G_0}k(G_F/G_0),
\]
and under this decomposition, the two maps
\[
 k(G_F/G_0)\rightrightarrows k(G_F/G_0)\otimes k(G_F/G_0),
 \qquad
 a\longmapsto a\otimes1,\quad
 a\longmapsto1\otimes a,
\]
induce the identity functor and the conjugation functor. Hence
\cite[Theorem~3.19]{BalmerSeparableExtensions} identifies
the resulting topological coequalizer with
\[
 X_{G_F}
 \simeq
 X_{G_0}/(G_F/G_0).
\]
Under this homeomorphism, the spectral map induced by
\(\operatorname{Res}_{G_0}^{G_F}\) is the orbit map.

Since \(\mathscr T(G_0)\) is stratified,
\(X_{G_0}\) is weakly noetherian. Then by Lemma~\ref{lem8.1},
\(X_{G_F}\) is weakly noetherian and the spectral map induced by restriction is surjective with
discrete fibers.

Finally,
\(
 \operatorname{Res}_{G_0}^{G_F}:
 \mathscr T(G_F)\longrightarrow\mathscr T(G_0)
\)
is strongly closed in the sense of \cite[Definition~13.27]{BarthelEtAlCosupport}. Indeed, its right adjoint is coinduction, which
agrees with induction because \([G_F:G_0]<\infty\), and
\(
 \operatorname{Coind}_{G_0}^{G_F}k(G_0/L)
 \simeq
 \operatorname{Ind}_{G_0}^{G_F}k(G_0/L)
 \simeq
 k(G_F/L)
\)
for every open subgroup \(L\leq_oG_0\). Thus the right adjoint
preserves compact objects. The quasi-finite descent theorem
\cite[Theorem~17.16]{BarthelEtAlCosupport} now applies and proves that
\(\mathscr T(G_F)\) is stratified.
\end{proof}

\begin{theorem}
\label{thm8.4}
Let \(k\) be a field of characteristic \(p>0\) and let \(F\) be a nonarchimedean local
field with residue field \(\kappa_F\). Then \(\operatorname{DAM}(F;k)\) is stratified if and only if \(\operatorname{char}(\kappa_F)\neq p.\)
\end{theorem}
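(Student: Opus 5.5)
The plan is to assemble Theorem~\ref{thm8.4} from the two directions already proved, using the Grothendieck--Galois identification of Recollection~\ref{rec2.4} to pass between Artin motives and permutation modules. That recollection gives a tensor-triangulated equivalence \(\operatorname{DAM}(F;k)\simeq\mathscr T(G_F)\), compatible with compact parts, so \(\operatorname{DAM}^{\rm gm}(F;k)\simeq\mathscr K(G_F)\). Stratification is a property of a rigidly-compactly generated tt-category that is invariant under tt-equivalence: the spectrum, the Balmer--Favi idempotents and the localizing ideals all correspond. It therefore suffices to prove that \(\mathscr T(G_F)\) is stratified if and only if \(\ell:=\operatorname{char}(\kappa_F)\neq p\).

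For the direction \(\ell\neq p\), I will invoke Proposition~\ref{prop8.3} directly. Its proof chains together three steps: the Sylow subgroup is a \(p\)-decomposition group (Theorem~\ref{thm7.11}); there is prime-to-\(p\) reduction from \(G_0=K\rtimes P\) down to \(P\); and there is quasi-finite descent along the normal subgroup \(G_0\trianglelefteq_oG_F\) of index prime to \(p\).

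For the converse, suppose \(\ell=p\). Proposition~\ref{prop3.4} shows that \(\operatorname{DAM}(F;k)\) is not stratified. Its proof produces an infinite elementary abelian quotient \(E\simeq\prod_{n\geq1}C_p\) of a \(p\)-Sylow subgroup (Lemma~\ref{lem3.2}). It then transports the non-weakly-visible point \(\mathfrak m_E\) to a non-weakly-visible point of \(X_{G_F}\), using Lemma~\ref{lem3.3} and Lemma~\ref{lem3.1}. Finally, it concludes via \cite[Theorem~A]{Zou}, which says that stratification forces the spectrum to be weakly noetherian.

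The two implications together give the equivalence, and the proof is essentially bookkeeping. The only point I expect to need care is making sure that the cited results apply to both characteristics of \(F\). On the wild side, Lemma~\ref{lem3.2} already treats \(\operatorname{char}F=p\) and \(\operatorname{char}F=0\) separately. On the tame side, the presentation of the tame quotient used in Proposition~\ref{prop8.3} holds for every nonarchimedean local field, including those of equal characteristic \(\ell\neq p\).
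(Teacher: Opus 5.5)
Your proposal is correct and follows the paper's proof, which simply combines Proposition~\ref{prop8.3} for the direction \(\operatorname{char}(\kappa_F)\neq p\) with Proposition~\ref{prop3.4} for the direction \(\operatorname{char}(\kappa_F)=p\). Your explicit appeal to Recollection~\ref{rec2.4}, transporting stratification across \(\operatorname{DAM}(F;k)\simeq\mathscr T(G_F)\), spells out a step the paper leaves implicit.
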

\begin{proof}
This follows from Proposition \ref{prop8.3} and Proposition \ref{prop3.4}.
\end{proof}

\begin{remark}\label{rem8.5}
For archimedean local fields, \(\operatorname{DAM}(\mathbb C;k)\) and \(\operatorname{DAM}(\mathbb R;k)\) are stratified by \cite[Theorem~9.11]{BalmerGallauerGeometry}. Together with the nonarchimedean case above, this completes the classification of all
local fields \(F\) for which \(\operatorname{DAM}(F;k)\) is stratified.
\end{remark}

\begin{corollary}
\label{cor8.6}
Let \(F\) be a nonarchimedean local field with \(\operatorname{char}(\kappa_F)\neq p.\) Then the telescope conjecture holds for \(\operatorname{DAM}(F;k)\).
\end{corollary}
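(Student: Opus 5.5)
The plan is to combine stratification (Proposition~\ref{prop8.3}) with generic noetherianity of the compact spectrum. Then we invoke the general criterion of Barthel--Heard--Sanders \cite{BarthelHeardSanders}: a stratified rigidly-compactly generated tt-category with generically noetherian Balmer spectrum satisfies the telescope conjecture. Via Recollection~\ref{rec2.4}, $\operatorname{DAM}(F;k)\simeq\mathscr T(G_F)$, so it suffices to show that $X_{G_F}$ is generically noetherian. Stratification itself is already available.

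\textbf{Step 1: the Sylow subgroup $P$.} As in the proof of Proposition~\ref{prop8.3}, write $G_0=K\rtimes P$ with $[G_F:G_0]=m$ prime to $p$, $K$ of pro-order prime to $p$, and $P$ a $p$-decomposition group. By Proposition~\ref{prop4.5}, $X_P$ is generically noetherian.

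\textbf{Step 2: pass to $G_0$.} By \cite[Example~3.15]{BalmerGallauerArtin}, restriction induces a homeomorphism $X_P\simeq X_{G_0}$. Since generic noetherianity is a purely topological property, $X_{G_0}$ is generically noetherian.

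\textbf{Step 3: pass to $G_F$.} The proof of Proposition~\ref{prop8.3} gives a homeomorphism $X_{G_F}\simeq X_{G_0}/(G_F/G_0)$, with the finite group $G_F/G_0$ acting by spectral automorphisms induced by conjugation. Lemma~\ref{lem8.2} then shows that $X_{G_F}$ is generically noetherian. This also proves the claim in the introduction that $\operatorname{Spc}(\operatorname{DAM}^{\rm gm}(F;k))$ is generically noetherian.

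\textbf{Step 4: conclude.} Combining Step 3 with Proposition~\ref{prop8.3}, apply \cite[Theorem~9.11]{BarthelHeardSanders} (stratified plus generically noetherian implies telescope) to conclude that every smashing localizing ideal of $\mathscr T(G_F)$ is compactly generated.

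The main obstacle is checking that the cited telescope theorem applies in exactly this generality. The BHS statement is formulated for weakly noetherian spectra with stratification in their sense, so one must verify two things. First, that the stratification obtained through \cite{Zou} and quasi-finite descent coincides with BHS stratification; this holds because the spectrum is weakly noetherian, where the notions agree. Second, that generic noetherianity is the hypothesis used there. If the precise reference instead requires noetherian spectra, the fallback is the argument of Balmer--Gallauer \cite{BalmerGallauerArtin}. That argument shows a smashing ideal is determined by a generalization-closed subset of the spectrum, and uses noetherianity of each $\operatorname{gen}(x)$ to realize the complement as a union of supports of compact objects. This step is local at each point, so generic noetherianity suffices.
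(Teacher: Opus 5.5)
Your proposal is correct and follows essentially the same route as the paper: generic noetherianity of $X_P$ (Proposition~\ref{prop4.5}) is transported to $X_{G_F}\simeq X_{G_0}/(G_F/G_0)\simeq X_P/(G_F/G_0)$ via Lemma~\ref{lem8.2}. Stratification (Proposition~\ref{prop8.3}/Theorem~\ref{thm8.4}) together with \cite[Theorem~9.11]{BarthelHeardSanders} then gives the telescope conjecture. The hedging and fallback in your last paragraph are unnecessary, since the paper applies the same theorem directly under these hypotheses.
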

\begin{proof}
In the proof of Proposition~\ref{prop8.3}, we have \[
X_{G_F}
 \simeq
 X_{G_0}/(G_F/G_0)\simeq
 X_{P}/(G_F/G_0)
\] for a \(p\)-decomposition group. Then by Proposition~\ref{prop4.5} and Lemma~
\ref{lem8.2}, \(
 \operatorname{Spc}(\operatorname{DAM}^{\rm gm}(F;k))\) is generically noetherian. Hence the telescope conjecture holds for \(\operatorname{DAM}(F;k)\) by Theorem~\ref{thm8.4} and \cite[Theorem~9.11]{BarthelHeardSanders}.
\end{proof}

\vspace{4mm}

\textbf{Peng Xu}\\
School of Mathematics, Nanjing University,
Nanjing 210093, P. R. China;\\
E-mail: \textsf{602023210017@smail.nju.edu.cn}\\[1mm]

\end{document}